\definecolor{MyDarkBlue}{cmyk}{0.8,0.3,0.8,0.4}
\definecolor{yellow}{rgb}{0.99,0.99,0.70}
\definecolor{white}{rgb}{1.0,1.0,1.0}
\definecolor{black}{rgb}{0.00,0.00,0.00}
\numberwithin{equation}{section}
\def\theequation{\arabic{section}.\arabic{equation}}
\newcommand{\be}{\begin{eqnarray}}
\newcommand{\ee}{\end{eqnarray}}
\newcommand{\ce}{\begin{eqnarray*}}
\newcommand{\de}{\end{eqnarray*}}
\newtheorem{theorem}{Theorem}[section]
\newtheorem{lemma}[theorem]{Lemma}
\newtheorem{remark}[theorem]{Remark}
\newtheorem{definition}[theorem]{Definition}
\newtheorem{proposition}[theorem]{Proposition}
\newtheorem{corollary}[theorem]{Corollary}
\def\eps{\varepsilon}
\def\e{\mathrm{e}}
\def\p{\partial}
\def\[{{\Big[}}
\def\]{{\Big]}}
\def\<{{\langle}}
\def\>{{\rangle}}
\def\({{\big(}}
\def\){{\big)}}
\def\dif{{\mathord{{\rm d}}}}
\def\min{{\mathord{{\rm min}}}}
\def\bb2{{\boldsymbol{2}}}
\def\no{\nonumber}
\def\={&\!\!=\!\!&}
\def\bC{{\mathbf C}}
\def\sA{{\mathcal A}}
\def\cC{{\mathcal C}}
\def\cD{{\mathcal D}}
\def\cJ{{\mathcal J}}
\def\cP{{\mathcal P}}
\def\cW{{\mathcal W}}
\def\mE{{\mathbb E}}
\def\mI{{\mathbb I}}
\def\mN{{\mathbb N}}
\def\mP{{\mathbb P}}
\def\mR{{\mathbb R}}
\def\b1{{\mathbbm 1}}
\def\sA{{\mathscr A}}
\def\sI{{\mathscr I}}
\def\sL{{\mathscr L}}
\def\sU{{\mathscr U}}
\def\geq{\geqslant}
\def\leq{\leqslant}
\def\le{\leqslant}
\def\eps{\varepsilon}
\def\e{\mathrm{e}}
\def\p{\partial}
\def\[{{\Big[}}
\def\]{{\Big]}}
\def\<{{\langle}}
\def\>{{\rangle}}
\def\dif{{\mathord{{\rm d}}}}
\def\min{{\mathord{{\rm min}}}}
\def\no{\nonumber}
\def\={&\!\!=\!\!&}
\def\bt{\begin{theorem}}
\def\et{\end{theorem}}
\def\bl{\begin{lemma}}
\def\el{\end{lemma}}
\def\br{\begin{remark}}
\def\er{\end{remark}}
\def\bd{\begin{definition}}
\def\ed{\end{definition}}
\def\bp{\begin{proposition}}
\def\ep{\end{proposition}}
\def\bc{\begin{corollary}}
\def\ec{\end{corollary}}
\def\geq{\geqslant}
\def\leq{\leqslant}
\def\le{\leqslant}
 \def\R{\mathbb R}
\def\ff{\frac} \def\R{\mathbb R}  \def\ff{\frac}
\def\<{\langle} \def\>{\rangle}
\def\bb2{{\boldsymbol{2}}}
\def\bbb1{\boldsymbol{1}}
\def\no{\nonumber}
\def\={&\!\!=\!\!&}
\begin{document}

\title[$\cW_{\bf d}$-convergence rate of EM schemes for invariant measures]
{
$\cW_{\bf d}$-convergence rate of EM schemes for invariant measures of supercritical stable SDEs}

\author[P.~Chen]{Peng Chen}
\address[P.~Chen]{School of Mathematics, Nanjing University of Aeronautics and
Astronautics, Nanjing, 211106, China}
\email{chenpengmath@nuaa.edu.cn}

\author[L.~Xu]{Lihu Xu}
\address[L.~Xu]{1. Department of Mathematics, Faculty of Science and Technology, University of Macau, Macau S.A.R., China. 2. Zhuhai UM Science \& Technology Research Institute, Zhuhai, China}
\email{lihuxu@um.edu.mo}

\author[X.~Zhang]{Xiaolong Zhang}
\address[X.~Zhang]{School of Mathematics and Statistics, Beijing Institute of Technology, Beijing, 100080, P.R. China}
\email{zhangxl.math@bit.edu.cn}

\author[X.~Zhang]{Xicheng Zhang}
\address[X.~Zhang]{School of Mathematics and Statistics, Beijing Institute of Technology, Beijing 100081, China\\
		Faculty of Computational Mathematics and Cybernetics, Shenzhen MSU-BIT University, 518172 Shenzhen, China}
\email{XichengZhang@gmail.com}

\maketitle

{\small \noindent {\bf Abstract:}
By establishing the regularity estimates for nonlocal Stein/Poisson equations under $\gamma$-order
H\"older and dissipative conditions on the coefficients,
we derive the $\cW_{\bf d}$-convergence rate for the Euler-Maruyama schemes applied to the invariant measure of SDEs driven by multiplicative $\alpha$-stable noises with $\alpha \in (\frac{1}{2}, 2)$, where $\cW_{\bf d}$ denotes the Wasserstein metric with ${\bf d}(x,y)=|x-y|^\gamma\wedge 1$
and $\gamma \in ((1-\alpha)_+, 1]$.
~\\

{\bf Key words}: Nonlocal Stein/Poisson equation; H\"older drift; Euler-Maruyama scheme; Invariant measure; Multiplicative $\alpha$-stable noise.

\bigskip

\section{Introduction}
The study of sampling random variables from a given probability distribution has broad applications across various fields such as social sciences, engineering, agriculture, ecology and medicine (see \cite{L10,RK16}). A widely used method for this task is the Langevin Monte Carlo (LMC), which approximates the target distribution through Langevin diffusion processes. Numerous works have examined the performance of LMC algorithms (see \cite{KSL22,M-T2,NSR19} and references therein). Specifically, consider a probability distribution density $\pi(x) \propto \e^{-U(x)}$, where $U:\mR^d \to \mR$ is a potential function. The associated stochastic differential equation (SDE) is given by:
\begin{align}\label{SDE6}
\dif X_t = -\nabla U(X_t)\dif t + \sqrt{2}\dif B_t, \ \ X_0 = x,
\end{align}
where $(B_t)_{t \geq 0}$ is a $d$-dimensional standard Brownian motion, and $X$ represents a particle's trajectory in the potential field $U(x)$. Under suitable conditions on $U$, 
SDE \eqref{SDE6} has a unique invariant probability measure $\mu(\dif x):= \pi(x)\dif x$, and the law of the solution $X_t$ converges exponentially to $\mu$ as $t \to \infty$ (cf. \cite{BEL18,M-T2}). The Euler-Maruyama (EM) scheme is commonly used to numerically approximate solutions to SDE \eqref{SDE6} and generate samples from $\mu$ (cf. \cite{BEL18}).
Given step size $\eta>0$, let $\mu_\eta$ denote the invariant measure corresponding to this scheme.
This raises two important questions:
\begin{itemize}
\item Can $\mu_\eta$ approximate $\mu$ as $\eta \to 0$?
\item If yes, what is the optimal rate of convergence?
\end{itemize}
Recent studies \cite{Dal17, DR20, FSX19} have established bounds on the Wasserstein distance between $\mu_\eta$ and $\mu$, achieving convergence rates of order $\eta^{1/2}$ or $\eta^{1/2} \log (1/\eta)$  under various assumptions. For more detailed results on  the Langevin sampling and the EM scheme, it is referred to \cite{EGZ19,FG16,FSX19,PP20}.

In statistical physics, the Langevin diffusion $X_t$ described by SDE \eqref{SDE6} represents the position of a particle at time $t$, which is subject to random forces. According to the central limit theorem (CLT), if these random impulses exhibit \textit{infinite} variance, their sum may converge to an $\alpha$-stable distribution with $\alpha \in (0, 2)$.
More precisely, consider a sequence of i.i.d. random variables $X_1, X_2, \dots$ with the following common distribution:
$$
\mathbb{P}(X_1>x) = \mathbb{P}(X_1<-x) =x^{-\alpha}/2, \quad x \geq 1,
$$
where $\alpha \in (0,2)$. For the random walk $S_n = X_1 + \dots + X_n$, the CLT (see \cite[p.~158]{Du10})
asserts that
$$
S_n / n^{1/\alpha} \Rightarrow Y,
$$
where $Y$ is an $\alpha$-stable random variable, and $\Rightarrow$ denotes weak convergence. Heavy-tailed distributions, like $\alpha$-stable distributions, often arise in various stochastic systems. For example, they are used to model inputs in computer and communications networks, represent risk processes, and occur naturally in models of epidemiological spread. Such distributions are backed by statistical evidence in numerous fields, including physics, geoscience and economics (see \cite{FKZ11} and references therein).
For further reading on heavy-tailed distributions and their applications in insurance and finance, it is referred to \cite{BGT87, EKM97}. In a different context, for $d=1$ and $U(x) = |x|^\beta$ in \eqref{SDE6}, Roberts and Tweedie \cite{RT96} showed that the diffusion process defined by SDE \eqref{SDE6} is exponentially ergodic if and only if $\beta \geq 1$. Therefore, simulating heavy-tailed distributions using a continuous Langevin diffusion \eqref{SDE6} is not appropriate for $\beta < 1$, as the process fails to capture the heavy-tailed nature of such distributions.

In this paper, we study the following $d$-dimensional SDE driven by an $\alpha$-stable process:
\begin{align}\label{sde:t}
\dif X_t = b(X_t) \dif t + \sigma(X_{t-}) \dif L^{(\alpha)}_t,
\end{align}
where $\alpha \in (0,2)$, and $\big\{L^{(\alpha)}_t, t \geq 0\big\}$ is a rotationally invariant $\alpha$-stable process in $\mathbb{R}^d$ with L\'evy measure
$$
\nu(\dif z) = \dif z/|z|^{d+\alpha}.
$$
Throughout this paper, we assume that $b:\mathbb{R}^d \to \mathbb{R}^d$ and $\sigma:\mathbb{R}^d \to \mathbb{R}^{d \times d}$ satisfy the following conditions:
\begin{enumerate}
\item[$(\mathbf{H}^\gamma)$] For some $\gamma \in ((1-\alpha)_+, 1]$ and $\kappa_0 > 1$, it holds that for all $x, y, \xi \in \mathbb{R}^d$,
$$
\kappa_0^{-1}|\xi| \leq |\sigma(x)\xi| \leq \kappa_0 |\xi|,\ \ \|\sigma(x) - \sigma(y)\| \leq \kappa_0 |x - y|^\gamma
$$
and
$$
|b(0)| \leq \kappa_0,\ \ |b(x) - b(y)| \leq \kappa_0 \left(|x - y|^\gamma + |x - y|\right).
$$
\end{enumerate}

In recent years, SDE \eqref{sde:t} has been extensively studied, see \cite{ CSZ18, CZZ21,MZ22,WXZ15, ZZ23} for references. In particular, under $(\mathbf{H}^\gamma)$, for each starting point $X_0 = x \in \mathbb{R}^d$, SDE \eqref{sde:t} admits a unique weak solution $X^x_t = X_t$ (see \cite{CZZ21}). Let $(P_t)_{t\geq 0}$ be the semigroup associated with $X^x_t$, that is, for each $f \in L^\infty(\mathbb{R}^d)$,
\begin{align}\label{semi}
 P_t f(x) := \mathbb{E} f(X^x_t).
\end{align}
For $f \in {\bf C}^2(\mathbb{R}^d)$, by It\^o's formula, we have
\begin{align}\label{Duh}
\partial_t P_t f(x) = P_t \sL f(x),\ \ t > 0,\ \ x \in \mathbb{R}^d,
\end{align}
where ${\bf C}^2(\mathbb{R}^d)$ consists of all $C^2$-functions with bounded derivatives up to $2$-order,
and $\sL$ is the generator of the semigroup $(P_t)_{t \geq 0}$, given by
\begin{align}\label{Def1}
\sL f(x) := \sL^{(\alpha)}_{\sigma(x)} f(x) + b(x) \cdot \nabla f(x),
\end{align}
where, for a $d \times d$ matrix $A$,
$$
\sL^{(\alpha)}_A f(x) = \int_{\mathbb{R}^d} [f(x + Az) - f(x) - \mathbf{1}_{|z| \leq 1} Az \cdot \nabla f(x)]\dif z/|z|^{d+\alpha}.
$$

Let $\phi_\eps(x) = \eps^{-d} \phi(x/\eps)$, $\eps\in(0,1)$, where $\phi$ is a smooth probability density function with compact support. Let $\theta_t(x)$ solve the following regularized ODE:
\begin{align*}
\dot\theta_t(x) = (b*\phi_{t^{1/\alpha}})(\theta_t(x)),\ \ \theta_0(x) = x.
\end{align*}
Under $(\mathbf{H}^\gamma)$, by \cite[Theorem 1.1]{MZ22}, for each $t > 0$, $X^x_t$ admits a density $p_t(x,y)$ so that
\begin{align}\label{DD20}
\partial_t p_t(x,y) = \sL p_t(\cdot,y)(x),
\end{align}
and for each $T > 0$, there is a $C = C(T, \kappa_0, d, \alpha, \gamma, \phi) > 0$ such that for any $t\in(0,T)$ and $x,y\in\mR^d$,
\begin{align}\label{TG:ep}
p_t(x,y) \asymp_C t(t^{\frac{1}{\alpha}} + |\theta_t(x)-y|)^{-d-\alpha},
\end{align}
and
\begin{align}\label{Grad01}
t|\Delta^{\alpha/2} p_t(\cdot,y)(x)| + t^{\frac1\alpha}|\nabla_x p_t(x,y)| \lesssim_C p_t(x,y),
\end{align}
where $\nabla_x$ is the gradient operator in the $x$-direction.
Note that \eqref{DD20} implies that for $f \in L^\infty(\mathbb{R}^d)$,
\begin{align*}
\partial_t P_t f(x) = \sL P_t f(x),\ \ t > 0,\ \ x \in \mathbb{R}^d.
\end{align*}

On the other hand, under the following dissipative assumption: for some
$\kappa_1, \kappa_2 > 0$,
\begin{align}\label{Dis}
\langle x, b(x) \rangle \leq -\kappa_1 |x|^2 + \kappa_2,
\end{align}
it is well-known that SDE \eqref{sde:t} has a unique invariant probability measure $\mu$ (see \cite{ZZ23}). This naturally raises the question of whether $\mu$ can be approximated using a computable algorithm, which generates random vectors whose distributions are close to $\mu$. A common approximation scheme is the EM algorithm. Specifically, let $\eta \in (0,1)$ be the step size, and consider the iteration
\begin{align}\label{EM1-1}
Y^x_{k+1} = Y^x_k + \eta b(Y^x_k) + \sigma(Y^x_k) \left(L^{(\alpha)}_{\eta(k+1)}- L^{(\alpha)}_{\eta k}\right), \quad k \geq 0,\ \ Y^x_0=x.
\end{align}
It is well established that the linear interpolation of $Y^x_k$ converges to $X^x_t$ (see \cite{PT97}).
For further details on the EM scheme, we refer to \cite{BT96,BMY09,HL18,KS19}.

Under assumption \eqref{Dis}, it can be shown that the Markov chain $(Y^x_k)_{k \geq 0}$ admits a unique invariant distribution $\mu_\eta$ (see Lemma \ref{W1.1-1L} below). Two main approaches are used to analyze the Wasserstein distance between invariant measures of two Markov processes: the Stein method and the Lindeberg principle (see \cite{AH19,CDRX22,CSX22+}). Both methods require sufficient regularity for at least one of the corresponding semigroups. In this paper, one of our primary objectives is to prove that $\mu_\eta$ converges to $\mu$ as $\eta \to 0$ under certain Wasserstein metrics, and furthermore, to provide an explicit convergence rate using the Stein method.

\subsection{Main Results}

Before presenting our main results, we introduce some notations and basic ideas used below.
Let ${\bf d}$ be a distance on $\mR^d$, and let $\cP_{\bf d}(\mR^d)$ denote the space of all probability measure $\mu$ that satisfies
\begin{align*}
\int_{\mR^d}{\bf d}(x,0) \mu(\dif x)<\infty. \end{align*}
Let $\cW_{\bf d}$ represent the Wasserstein metric on $\cP_{\bf d}(\mR^d)$, defined as
\begin{align*}
\cW_{\bf d}(\mu_1,\mu_2):=\ \inf_{(X,Y)\in\Pi(\mu_1,\mu_2)}\mathbb{E} {\bf d}(X,Y),
\end{align*}
where $\Pi(\mu_1,\mu_2)$ is the set of all coupling realizations of $\mu_1,\mu_2$.
For any $\mu \in \cP_{\bf d}(\mR^d)$ and $f:\mR^d\to\mR$
with $|f(x)|\leq C(1+{\bf d}(x,0))$, we denote
\begin{align*}
\mu(f):=\int_{\mR^d}f(x) \mu(\dif x).
\end{align*}
Using the Kantorovich-Rubinstein duality (see \cite[p.60]{V09}), we have
\begin{align}\label{jsy5}
\cW_{\bf d}(\mu_1,\mu_2)=\sup_{g \in {\rm Lip}({\bf d})}|\mu_1(g)-\mu_2(g)|, \end{align}
where
\begin{align*}
{\rm Lip}({\bf d}):=\big\{g: \R^d\to\R: |g(x)-g(y)| \le {\bf d}(x,y)\big\}.
\end{align*}
For given $g\in {\rm Lip}({\bf d})$, letting $\bar{g}=:g-\mu(g)$, we consider the Stein/Poisson equation\footnote{The Poisson equation is also called the Stein equation due to its central role in Stein's method.} defined as follows:
\begin{align}\label{Ste}
\sL f(x)=\mu(g)-g(x)= -\bar g(x).
\end{align}
Suppose that \eqref{Ste} has a regular enough solution $f$. We can express $\cW_{\bf d}(\mu,\mu_\eta)$ as
\begin{align}\label{DP2}
\cW_{\bf d}(\mu,\mu_\eta)\overset{\eqref{jsy5}}=\sup_{g \in {\rm Lip}({\bf d})}|\mu(g)-\mu_\eta(g)| =\sup_{g \in {\rm Lip}({\bf d})}|\mu_\eta(\bar g)| \overset{\eqref{Ste}}=\sup_{g \in {\rm Lip}({\bf d})}|\mu_\eta(\sL f)|.
\end{align}
For fixed $x\in\mR^d$, we introduce the following auxiliary random process:
\begin{align*}
Z^x_t=x+t b(x)+\sigma(x)L^{(\alpha)}_t.
\end{align*}
By Itô's formula, we have
\begin{align}\label{DS9}
\mE f(Z^x_t)=f(x)+\int^t_0\mE\sL_x f(Z^x_s)\dif s,
\end{align}
where
\begin{align*}
 \sL_x f(y):=b(x)\cdot\nabla f(y)+\sL^{(\alpha)}_{\sigma(x)}f(y).
\end{align*}
Thus,
\begin{align*}
\sL f(x)=\frac{1}{\eta}\left[\mE f(Z^x_\eta)-f(x)\right]+\frac1\eta\int^\eta_0\mE[\sL_x f(x)-\sL_x f(Z^x_s)]\dif s.
\end{align*}
Let $\mu_\eta\in \cP_{\bf d}(\mR^d)$ be the stationary distribution of Markov chain $(Y^x_k)_{k\geq 0}$
defined in \eqref{EM1-1}, that is, for $\xi\sim\mu_\eta$, independent of $L^{(\alpha)}_\cdot$,
\begin{align*}
Z^{\xi}_\eta\sim \xi\sim \mu_\eta\in \cP_{\bf d}(\mR^d),
\end{align*}
where $\sim$ denotes the same distribution.
In particular,
\begin{align}\label{FD7}
\mu_\eta(\sL f)=\mE(\sL f(\xi))=\frac1\eta\int^\eta_0\mE\left[\sL_{\xi} f(\xi)-\sL_{\xi} f(Z^{\xi}_s)\right]\dif s.
\end{align}
To show the convergence rate, it suffices to show certain regularity estimates for $f$.

Let $\kappa_1,\kappa_2>0$ and $\kappa_0,\gamma$ be the same as in $(\mathbf{H}^\gamma)$.
Throughout this paper we use the parameter set
\begin{align}\label{The}
\Theta:=(d,\alpha,\kappa_0,\kappa_1,\kappa_2,\gamma).
\end{align}
Now, we are ready to present our main results.

\bt\label{main}
Let $\alpha\in(1,2)$ and ${\bf d}(x,y):=|x-y|$. Suppose that $(\mathbf{H}^1)$ and
$$
\<x-y,b(x)-b(y)\>\leq-\kappa_1|x-y|^2+\kappa_2,\ \ \forall x,y\in\mR^d.
$$
Then there are $\eta_0=\eta_0(\Theta)>0$ and  $C=C(\Theta)>0$ such that for all $\eta\in(0,\eta_0)$,
$$
\cW_{\bf d}(\mu,\mu_\eta)\leq C\eta^{1/\alpha}.
$$
\et

The next result considers weaker H\"older regualrity and dissipativity assumption.
\bt\label{main1}
Let $\alpha\in(\frac12,2)$, $\gamma\in((1-\alpha)_+,1]$ and ${\bf d}(x,y):=|x-y|^\gamma\wedge 1$.
Suppose that $(\mathbf{H}^\gamma)$ and
$$
\<x,b(x)\>\leq-\kappa_1|x|^2+\kappa_2,\ \ \forall x\in\mR^d.
$$
For any $\eps\in(0,2\alpha-1)$, there are $\eta_0=\eta_0(\Theta)>0$ and  $C=C(\Theta,\eps)>0$ such that for all $\eta\in(0,\eta_0)$,
$$
\cW_{\bf d}(\mu,\mu_\eta)\leq C \Big(\eta^{\gamma\wedge(2\alpha-1-\eps)}
\b1_{\alpha\in(\frac12,1]}+\eta^{\gamma/\alpha}\b1_{\alpha > 1}\Big).
$$
\et
\br\rm
At this stage, it seems that our method is not applicable to the case $\alpha \in (0,\frac{1}{2}]$. For further discussion on this limitation, see Remark \ref{jys1} below.
Moreover, the convergence rates established in Theorem \ref{main} and Theorem \ref{main1} may not be sharp, see Theorem \ref{syy2} below for the precise convergence rate of 
$\cW_{\bf d}(\mu,\mu_\eta)$ for OU processes. Obviously, the regularity of $b$ could affect the convergence rate.
\er
\begin{remark}\rm
Although we focus exclusively on EM scheme \eqref{EM1-1}, we can also extend our results to the Pareto-type EM scheme studied in \cite{CDRX22}, which is defined as
\begin{align*}
Y_{k+1} = Y_{k} + \eta b(Y_{k}) + (\eta/\beta)^{\frac{1}{\alpha}} \sigma(Y_{k}) Z_{k+1}, \quad k \geq 0,
\end{align*}
where $\beta = \alpha \Gamma (\frac{d}{2}) /(2\pi^{\frac{d}{2}})$ is the
`inverse temperature' parameter (see \cite{NSR19}) and $\{Z_k, k \in \mathbb{N}\}$ is a sequence of $d$-dimensional independent random vectors drawn from the Pareto distribution with the density function
\begin{align*}
p(z) \propto |z|^{-\alpha - d} \b1_{|z| \geq 1}(z).
\end{align*}
Since the proof procedures are similar to those of EM scheme \eqref{EM1-1}, we do not elaborate on this case further in this paper. We mention that in economics, Pareto distributions and their generalizations serve as flexible models for income distributions and other social and economic phenomena (see \cite{A15}).
\end{remark}

Our main contribution of this paper can be summarized as follows:

\begin{itemize}
    \item We establish the well-posedness and regularity properties for the Stein/Poisson equation
    \eqref{Ste} in certain weighted H\"older spaces with logarithmic growth.
     It is worth noting that our analysis includes cases with multiplicative noise and unbounded drift terms.

   \item By applying the Stein method developed in \cite{FSX19}, we demonstrate that $\mu_\eta$ converges to $\mu$ as $\eta \to 0$ in the $\mathcal{W}_{\mathbf{d}}$-Wasserstein metric. Moreover, we provide an explicit rate of convergence.
\end{itemize}

\subsection{Related work and the novelty}\label{Jel0}
In \cite{CDRX22}, the authors focus on the additive noise and subcritical case, i.e., $\alpha \in (1,2)$. Besides, they also impose the following stronger assumptions:
$$
\|\nabla b\|_\infty + \|\nabla^2 b\|_\infty \leq \kappa_0,
$$
and a monotonicity condition: for some $\kappa_1, \kappa_2 > 0$,
\begin{align}\label{jb0}
\langle x - y, b(x) - b(y) \rangle \leq -\kappa_1 |x - y|^2 + \kappa_2.
\end{align}
Using the Lindeberg principle, the authors in \cite{CDRX22} derive the convergence rate of $\mu_{\eta}$ under the Wasserstein-1 distance. A notable feature of \cite{CDRX22} 
is the use of Malliavin calculus to establish $C^2$-regularity of the semigroup on ${\rm Lip}({\bf d})$ for SDE \eqref{sde:t}.

In this paper, we extend the analysis to the supercritical case, i.e., $\alpha \in \left(\frac{1}{2}, 2\right)$, assuming $b$ and $\sigma$ satisfy $(\mathbf{H}^\gamma)$ and \eqref{Dis}. Leveraging Stein's method and the heat kernel estimates from \cite{MZ22}, we study the convergence rate of $\mu_{\eta}$ under the $\mathcal{W}_{\bf d}$-distance, where ${\bf d}(x,y):=|x-y|^\gamma \wedge 1$ (see Theorem \ref{main1} above). Furthermore, under $(\mathbf{H}^1)$ and \eqref{jb0}, we establish the convergence rate of $\mu_{\eta}$ in the Wasserstein-1 distance (see Theorem \ref{main} above). Compared with \cite{CDRX22}, we make weaker assumptions on $b$ and $\sigma$ and achieve the convergence rate of $\eta^{\frac{1}{\alpha}}$ using alternative techniques.

Although the Malliavin calculus provides powerful tools for analyzing the regularity of the semigroup for SDE \eqref{sde:t}, it has two main limitations in our context: (i) Malliavin calculus typically requires both $b$ and $\sigma$ to possess sufficient regularity to achieve the desired semigroup regularity. Specifically, it is unsuitable for analyzing fractional regularity when $b$ and $\sigma$ satisfy only certain H\"older regularity conditions. (ii) For multiplicative noise, the adaptive Malliavin calculus based on time-change techniques, as used in \cite{CDRX22}, is not applicable because the jump of $X_t$ at time $t$ depends on both $X_{t-}$ and the jump of $L^{(\alpha)}_t$.

In our preliminary study, using the finite-jump approximation method from \cite{WXZ15}, we obtained $C^2$-regularity for the semigroup on ${\rm Lip}({\bf d})$ for SDE \eqref{sde:t} with $\alpha \in (1,2)$. However, this method involves highly complex calculations and estimates and cannot be used in the supercritical case, i.e., $\alpha \in (0,1]$, as higher-order directional derivatives of the SDE solution \eqref{sde:t} only admit $m$-order moment estimates with $m \in (0,\alpha)$. In this paper, in Lemmas \ref{9261} and \ref{le:d2Pt}, we employ heat kernel estimates from \cite{MZ22} and commutator estimates to establish weighted $s$-H\"older ($s > \alpha$) regularity for the semigroup on ${\rm Lip}(\bf d)$ for SDE \eqref{sde:t} with $\alpha \in (0,2)$, which is crucial for deriving the convergence rates for $\mu_{\eta}$ under the $\mathcal{W}_{\bf d}$-distance, where ${\bf d}$ is the bounded H\"older distance.

On the other hand, the Stein/Poisson equation is foundational in mathematical physics with a long research history. Gilbarg and Trudinger \cite{GT83} established interior Schauder estimates for solutions of $\Delta f = g$. Stein \cite{S70} used the Riesz potential to establish global regularity estimates for solutions to $\Delta^{\frac{\alpha}{2}} f = g$ (see also Bass \cite{B09}). Ros-Oton and Serra \cite{RS16} investigated the interior H\"older regularity of solutions to $A f = g$ for certain symmetric $\alpha$-stable operators $A$. Recently, K\"uhn \cite{K21,K22} established global and interior Schauder estimates for solutions to $A f = g$ in certain H\"older-Zygmund spaces, where $A$ is defined by a general Feller process. Notably, many studies have addressed the Stein/Poisson equation defined by infinitesimal generators of specific ergodic diffusion processes. For example, Pardoux and Veretennikov \cite{PV01,PV03,PV05} studied the well-posedness and regularity estimates of the Stein/Poisson equation defined by an elliptic operator associated with an ergodic diffusion process, applying these results to analyze diffusion approximations in multiscale systems.

However, there appear to be few studies addressing well-posedness and regularity estimates of the Stein/Poisson equation for nonlocal operators with unbounded coefficients. Furthermore, as noted in \cite{RX21}, the results of Pardoux and Veretennikov have been widely applied in fields such as the central limit theorem, moderate and large deviations, spectral methods for multiscale systems, the averaging principle, homogenization, and numerical approximations of invariant measures for SDEs and SPDEs. We believe that the results on the Stein/Poisson equation \eqref{Ste} in this paper may be applied to similar topics in a nonlocal context. In future work, we plan to explore these topics further.

In this paper, we investigate the well-posedness and regularity estimates for the Stein/Poisson equation \eqref{Ste} in weighted H\"older spaces. Specifically, for any $g \in {\rm Lip}({\bf d})$, we define
\begin{align}\label{jsj0}
f(x) := \int^\infty_0 P_t \bar g(x) \, \mathrm{d}t,\ \ \bar{g} := g - \mu(g).
\end{align}
We show that the above $f$ belongs to certain weighted H\"older spaces  (see Theorem \ref{Th25}), and indeed is the solution of Stein/Poisson equation \eqref{Ste} (see Theorem \ref{Th216}). 
In particular, we consider two cases:

({\it Case 1.}) Under $(\mathbf{H}^1)$ and \eqref{jb0}, we take ${\bf d}(x, y) = |x - y|$.

({\it Case 2.}) Under $(\mathbf{H}^\gamma)$ and \eqref{Dis}, we take ${\bf d}(x, y) = |x - y|^\gamma \wedge 1$.

In {\it Case 1}, it follows from \cite{Wang20} that there exist constants $C, \lambda > 0$ such that for any $\varphi \in {\rm Lip}({\bf d})$,
\begin{align*}
\|\nabla P_t \varphi\|_\infty \lesssim_C e^{-\lambda t} \|\nabla \varphi\|_\infty, \quad t > 0.
\end{align*}
Using this result and certain commutator estimates, we show that for any $g \in {\rm Lip}({\bf d})$,
the function $f$ defined in \eqref{jsj0} satisfies
\begin{align*}
\|\nabla f\|_\infty + \|\nabla^2 f\|_\infty \lesssim_C \|\nabla g\|_\infty.
\end{align*}
See Lemma \ref{le:n2Pt} and Theorem \ref{Le23} for further details.

In {\it Case 2}, it follows from \cite[Theorem 1.2]{ZZ23} that for any $m \in (0, \alpha)$, there exist constants $C, \lambda > 0$ such that
\begin{align}\label{jsb1}
\sup_{\|\varphi / \rho_m\|_\infty \leq 1} |P_t \varphi(x) - \mu(\varphi)| \lesssim_C \e^{-\lambda t} \rho_m(x), \quad t > 0, \, x \in \mathbb{R}^d,
\end{align}
where $\rho_m(x) := (1 + |x|^2)^{m / 2}$. From \eqref{jsb1}, we obtain only that $f$, defined as in \eqref{jsj0}, and its derivatives exhibit polynomial growth, which complicates the analysis of the convergence rate of $\mathcal{W}_{\bf d}(\mu, \mu_\eta)$. This difficulty arises because, in general, SDE \eqref{sde:t} has only $m$-order moment estimates for $m \in (0, \alpha)$.
To address this, we introduce the cut-off distance ${\bf d}(x, y) = |x - y|^\gamma \wedge 1$. This allows us to show that $f$, defined as in \eqref{jsj0}, and its derivatives exhibit only logarithmic growth (see Theorem \ref{Th25} below). The key observation is the integral
\begin{align*}
\int^\infty_0 1 \wedge (a \e^{-\lambda t}) \, \mathrm{d}t = \lambda^{-1} (1 + \ln a), \quad \forall \lambda > 0, \, a \geq 1.
\end{align*}

\subsection{Organization and notations}

This paper is organized as follows: Section \ref{Solvability} discusses the solvability and regularity of the Stein/Poisson equation \eqref{Ste}, while Section \ref{proof} presents detailed proofs of Theorem \ref{main} and Theorem \ref{main1}. Appendix \ref{EMergo} includes supplementary lemmas and the proof of the ergodicity of the EM scheme presented in \eqref{EM1-1}. Appendix \ref{A:OU} presents the sharp convergence rate of EM schemes for the Ornstein-Uhlenbeck (OU) process.

\bigskip

{\bf Notations.}
Throughout this paper, we use the following conventions: The letter $C$ with or without subscripts denotes an unimportant constant, whose value may be different from line to line. We use $A\asymp B$ and $A\lesssim B$ (or $B\gtrsim A$) to denote $C^{-1} B\leq A\leq C  B$ and $A\leq C  B$ respectively for some unimportant constant $C \geq 1$. For $s,r\geq 0$, denote by $s\wedge r:=\min\{s,r\}$ and $s\vee r:=\max\{s,r\}$.

\section{Solvability of nonlocal Stein/Poisson equations}\label{Solvability}

Throughout this section, we fix $\alpha \in \left(\frac{1}{2}, 2\right)$. Our primary objective is to investigate the well-posedness and regularity properties of the Stein/Poisson equation \eqref{Ste} in weighted H\"older spaces. We consider two distinct cases: \textbf{monotonicity drift} and \textbf{dissipativity drift}. For the monotonicity drift case, we restrict to $\alpha \in (1, 2)$ and derive sharper estimates. For the dissipativity drift case, we explore the full range $\alpha \in \left(\frac{1}{2}, 2\right)$, yielding slightly rougher estimates.

\subsection{Weighted H\"older spaces}
In this section, we recall the definition of weighted H\"older spaces and their equivalent characterizations, which will be used throughout this paper (see \cite{HZZZ24,T06}).

\begin{definition}\label{Def31}
A $C^\infty$-function
$\rho:\mathbb{R}^d \to (0,\infty)$ is called an admissible weight if, for some $C,\ell > 0$,
$$
\rho(x)\lesssim_C \rho(y)(1 + |x - y|^\ell), \quad \forall x, y \in \mathbb{R}^d,
$$
and for each $j \in \mathbb{N}$, there exists a constant $C_j > 0$ such that
$$
|\nabla^j \rho(x)| \leq C_j \rho(x), \quad \forall x \in \mathbb{R}^d.
$$
\end{definition}

\begin{remark}\rm
Typical examples of admissible weights include:
$$
\rho(x) = (\log(\e + |x|^2))^m, \quad \rho(x) = (1 + |x|^2)^{m/2}, \quad m \in \mathbb{R}.
$$
\end{remark}

From now on, we fix an admissible weight $\rho$.
Let $L^\infty_\rho:=L^\infty_\rho(\mathbb{R}^d) $
denote the space of
all measurable functions
on $\mathbb{R}^d$ with finite norm:
$$
\|f\|_{L^\infty_\rho} := \|f \rho\|_\infty := \sup_{x}|f(x) \rho(x)| < \infty.
$$
For $\beta > 0$, let $\bC^\beta_\rho(\mathbb{R}^d)$ denote the weighted H\"older space defined by
$$
\|f\|_{\bC^\beta_\rho} := \sum_{k = 0}^{[\beta]} \|\nabla^k f\|_{L^\infty_\rho} + \sup_{|v| \leq 1} \frac{\|\delta_v \nabla^{[\beta]} f\|_{L^\infty_\rho}}{|v|^{\beta - [\beta]}} < \infty,
$$
where $[\beta]$ denotes the integer part of $\beta$, $\nabla^k$ is the $k$-th order gradient operator, and for a vector-valued function $F:\mathbb{R}^d \to \mathbb{R}^m$,
$$
\delta_v F(x) := F(x + v) - F(x).
$$
By definition, it is clear that for $0 \leq \beta_1 \leq \beta_2$ and two admissible weights $\rho_1, \rho_2$ with $\rho_1 \leq \rho_2$,
$$
\|f\|_{\bC^{\beta_1}_{\rho_1}} \lesssim_C \|f\|_{\bC^{\beta_2}_{\rho_2}}.
$$
Note that for two functions $f, g: \mathbb{R}^d \to \mathbb{R}$,
\begin{align}\label{DS1}
\delta_v (fg)(x) = f(x)\delta_v g(x) + \delta_v f(x) g(x + v).
\end{align}
If $\rho \equiv 1$, then $\bC^\beta_\rho(\mathbb{R}^d)$ becomes the classical H\"older space, and we simply write:
$$
\bC^\beta(\mathbb{R}^d) = \bC^\beta_1(\mathbb{R}^d), \quad \|f\|_{\bC^\beta} := \|f\|_{\bC^\beta_1}.
$$
If $\rho(x) = (1 + |x|^2)^{-\frac{m}{2}}$ for some $m \geq 0$, then we write
$$
L^\infty_m:=L^\infty_\rho,\ \ \mathcal{C}^\beta_m(\mathbb{R}^d) = \bC^\beta_\rho(\mathbb{R}^d), \quad \|f\|_{\mathcal{C}^\beta_m} := \|f\|_{\bC^\beta_\rho},\ \ \beta>0.
$$

Next, we recall an equivalent characterization for $\bC^\beta_\rho(\mathbb{R}^d)$ when $\beta \in (0, \infty) \setminus \mathbb{N}$ and some basic properties of $\bC^\beta_\rho(\mathbb{R}^d)$ (cf. \cite[Theorem 2.6, Remark 2.8, and Corollary 2.9]{HZZZ24}).

\begin{theorem}\label{Le:Cs}
Let $\rho$ be an admissible weight, and let $\beta' \geq \beta > 0$ be non-integers and $\mN\ni m > \beta$.
\begin{enumerate}[(i)]
\item Let $\delta^m_v f := \delta_v(\delta^{m-1}_v f)$ be the $m$-order difference operator. Then we have
\begin{align*}
\|f\|_{\bC^\beta_\rho} \asymp_C \|f\|_{L^\infty_\rho} + \sup_{|v| \leq 1}  \|\delta^m_v f\|_{L^\infty_\rho}/|v|^\beta.
\end{align*}
\item Let $\theta \in (0,1)$ be such that $\theta \beta$ is not an integer. Then we have
\begin{align*}
\|f\|_{\bC^{\theta \beta}_\rho} \lesssim_C \|f\|_{L^\infty_\rho}^{1 - \theta} \|f\|_{\bC^{\beta}_{\rho}}^{\theta}.
\end{align*}
\item
Let $\ell$ be the parameter associated with $\rho$ as in Definition \ref{Def31}
and let $k > \beta' - \beta\geq 0$. Then we have
$$
\|\delta^k_v f\|_{\bC^\beta_\rho} \lesssim_C |v|^{\beta' - \beta} (1 + |v|^\ell) \|f\|_{\bC^{\beta'}_\rho}, \quad \forall v \in \mathbb{R}^d.
$$
\end{enumerate}
\end{theorem}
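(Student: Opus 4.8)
The plan is to reduce all three statements to the unweighted case $\rho\equiv 1$ via the multiplicative structure of the weight, and then invoke (or reprove) the classical Littlewood--Paley / Besov-type characterizations of H\"older spaces. For part~(i), the standard fact in the unweighted setting is that for non-integer $\beta>0$ and any integer $m>\beta$, the norm $\|f\|_{\bC^\beta}$ is comparable to $\|f\|_\infty+\sup_{|v|\le 1}\|\delta^m_v f\|_\infty/|v|^\beta$; this is classical (e.g.\ via dyadic decomposition, writing $\widehat{\delta^m_v f}$ in terms of $(\e^{iv\cdot\xi}-1)^m$ and summing the frequency blocks, using $|(\e^{iv\cdot\xi}-1)^m|\lesssim \min\{(|v||\xi|)^m,1\}$ both for the $\lesssim$ and, for the reverse, a Bernstein-type lower bound on a suitable annulus). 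To pass to the weighted space I would use that multiplication by $\rho$ (resp.\ $\rho^{-1}$) maps $\bC^\beta_\rho\to\bC^\beta$ (resp.\ $\bC^\beta\to\bC^\beta_\rho$) boundedly: the bounds $|\nabla^j\rho|\le C_j\rho$ together with $\rho(x)\lesssim\rho(y)(1+|x-y|^\ell)$ control the Leibniz/difference expansions $\delta^m_v(f\rho)$ via \eqref{DS1}, so $\|f\rho\|_{\bC^\beta}\asymp\|f\|_{\bC^\beta_\rho}$. Then one applies the unweighted characterization to $f\rho$ and transfers back, checking that the error terms arising from distributing differences onto $\rho$ (which cost powers of $|v|\le 1$) are absorbed.

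For part~(ii), the interpolation inequality $\|f\|_{\bC^{\theta\beta}}\lesssim\|f\|_\infty^{1-\theta}\|f\|_{\bC^\beta}^\theta$ (with $\theta\beta\notin\mathbb N$) is the classical real-interpolation identity $\bC^{\theta\beta}=(\bC^0,\bC^\beta)_{\theta,\infty}$, provable directly by the dyadic block estimate: if $\Delta_j$ are Littlewood--Paley projections, then $\|\Delta_j f\|_\infty\lesssim\min\{\|f\|_\infty,\,2^{-j\beta}\|f\|_{\bC^\beta}\}$, and optimizing the split point in $\sum_j 2^{j\theta\beta}\|\Delta_j f\|_\infty$ gives the result. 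Again I transfer to the weighted setting by the multiplier argument above applied at each of the three levels $\bC^0_\rho,\bC^{\theta\beta}_\rho,\bC^\beta_\rho$; since $\rho$ is a fixed admissible weight the comparison constants are uniform.

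For part~(iii), I would estimate $\delta^k_v f$ directly. Write $\delta^k_v f(x)=\sum_{i=0}^k(-1)^{k-i}\binom{k}{i}f(x+iv)$. Since $k>\beta'-\beta$, one expects a gain of $|v|^{\beta'-\beta}$: expand $f$ at order $\lceil\beta'\rceil-1$ around $x$, use that the polynomial part of degree $<k$ is annihilated by $\delta^k_v$, and bound the remainder using the top-order H\"older seminorm of $f$ in $\bC^{\beta'}_\rho$, picking up $|v|^{\beta'-\beta}$ from the Taylor remainder combined with a further $|v|^\beta$-type factor when measuring the $\bC^\beta_\rho$-norm of the result (this is most cleanly done via the characterization in~(i): bound $\|\delta^m_w(\delta^k_v f)\|_{L^\infty_\rho}$ by splitting $\delta^m_w\delta^k_v=\delta^k_v\delta^m_w$ and using $|v|,|w|\le 1$). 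The weight growth factor $(1+|v|^\ell)$ enters precisely from $\rho(x+iv)\lesssim\rho(x)(1+|v|^\ell)$ when comparing $\|\cdot\|_{L^\infty_\rho}$ at the shifted points. Alternatively, one cites \cite[Theorem 2.6, Remark 2.8, Corollary 2.9]{HZZZ24} verbatim, since the statement is quoted from there.

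The main obstacle I anticipate is purely bookkeeping in the weighted transfer: keeping track, in parts~(i) and~(iii), of how the difference operator $\delta^m_v$ interacts with the weight through \eqref{DS1}, so that the terms in which derivatives/differences land on $\rho$ rather than $f$ are genuinely lower order (they come with extra powers of $|v|\le1$ and are controlled by $|\nabla^j\rho|\lesssim\rho$), and ensuring the resulting constants depend only on $\beta,\beta',m,k,d$ and the admissible-weight data $(C,\ell,\{C_j\})$. Since the result is attributed to \cite{HZZZ24}, a short proof may simply recall the unweighted Littlewood--Paley facts and indicate this reduction rather than reproving everything.
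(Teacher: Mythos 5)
The paper offers no proof of this statement at all: it is explicitly recalled from \cite[Theorem 2.6, Remark 2.8, Corollary 2.9]{HZZZ24}, so your closing alternative (``cite \cite{HZZZ24} verbatim'') is in fact exactly what the authors do. Your sketch of a self-contained argument is nonetheless sound and follows the standard route one would take (and essentially the route taken in \cite{HZZZ24}): the map $f\mapsto f\rho$ is an isomorphism between $\bC^\beta_\rho$ and $\bC^\beta$ for an admissible weight, because $|\nabla^j\rho|\leq C_j\rho$ (hence also $|\nabla^j(1/\rho)|\lesssim 1/\rho$) lets the Leibniz and difference expansions via \eqref{DS1} be absorbed, after which (i) and (ii) are the classical H\"older--Zygmund difference characterization and real interpolation for non-integer exponents, provable by Littlewood--Paley blocks as you indicate. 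A few points of care if you were to write this out: in (i) the integer $m$ is the order of the difference operator and the non-integrality of $\beta$ is essential (otherwise one only gets the Zygmund class); in (iii) the estimate is claimed for all $v\in\mR^d$, so the regime $|v|>1$ must be treated separately (it is trivial from the shift bound, and is precisely where the factor $(1+|v|^\ell)$ from $\rho(x)\lesssim\rho(x+iv)(1+|v|^\ell)$ enters), while for $|v|\leq 1$ the cleanest derivation of the gain $|v|^{\beta'-\beta}$ under $k>\beta'-\beta$ is the dyadic-block bound $\|\Delta_j\delta^k_v f\|_\infty\lesssim\min\{(2^j|v|)^k,1\}2^{-j\beta'}\|f\|_{\bC^{\beta'}}$ rather than a raw Taylor expansion, which gets awkward when $\beta'$ is fractional. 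With those details supplied, your outline would constitute a complete proof; as the paper stands, the comparison is simply between your sketch and a citation.
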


\begin{remark}\rm
For $\beta \in (0, 2) \setminus \{1\}$, an equivalent norm of $\bC^\beta_\rho$ is given by
$$
\|f\|_{\bC^\beta_\rho} \asymp_C \|f\|_{L^\infty_\rho} + \sup_{|v| \leq 1} \|\delta^{(2)}_v f\|_{L^\infty_\rho}/|v|^\beta,
$$
where $\delta^{(2)}_v$ is the symmetric difference operator defined by
$$
\delta^{(2)}_v f(x) := f(x + v) + f(x - v) - 2 f(x).
$$
\end{remark}

The following lemma provides an alternative expression for $\mathscr{L}^{(\alpha)}_{\sigma(x)} f(v)$.

\begin{lemma}\label{Le33}
Suppose that for some $\gamma \in [0,1]$ and $\kappa_0 > 1$,
\begin{align}\label{Sig}
\kappa_0^{-1} |\xi| \leq |\sigma(x) \xi| \leq \kappa_0 |\xi|, \quad \|\sigma(x) - \sigma(y)\| \leq \kappa_0 |x - y|^\gamma.
\end{align}
Let $\jmath(x,z) := (|z| / |\sigma^{-1}(x) z|)^{d+\alpha} / \det(\sigma(x))$. Then, for any $x, v \in \mathbb{R}^{d}$,
\begin{align}\label{op:La}
\mathscr{L}^{(\alpha)}_{\sigma(x)} f(v)
= \frac{1}{2} \int_{\mathbb{R}^d} \delta^{(2)}_z f(v) \frac{\jmath(x, z)}{|z|^{d+\alpha}} \, \mathrm{d} z,
\end{align}
and there exists a constant $\widetilde\kappa_0 > 1$ such that for all $x, y, z \in \mathbb{R}^d$,
\begin{align}\label{Ho1}
\widetilde\kappa_0^{-1} \leq \jmath(x, z) \leq \widetilde\kappa_0, \quad |\jmath(x, z) - \jmath(y, z)| \leq \widetilde\kappa_0 |x - y|^\gamma.
\end{align}
\end{lemma}

\begin{proof}
Since for each $\epsilon \in (0, 1)$,
$$
\int_{\{\epsilon < |y| \leq 1\}} \sigma(x) y \cdot \nabla f(v) \frac{\mathrm{d} y}{|y|^{d+\alpha}} = 0,
$$
and by symmetry,
$$
\int_{\{\epsilon < |y| < \infty\}} [f(v + \sigma(x) y) - f(v)] \frac{\mathrm{d} y}{|y|^{d+\alpha}} = \int_{\{\epsilon < |y| < \infty\}} [f(v - \sigma(x) y) - f(v)] \frac{\mathrm{d} y}{|y|^{d+\alpha}},
$$
we have
\begin{align}\label{L:sy}
\mathscr{L}^{(\alpha)}_{\sigma(x)} f(v) = \frac{1}{2} \int_{\mathbb{R}^d} \delta^{(2)}_{\sigma(x) y} f(v) \frac{\mathrm{d} y}{|y|^{d+\alpha}},
\end{align}
which gives the desired \eqref{op:La} by the change of variable $z = \sigma(x) y$.
Since $\det(\sigma(x))$ is a polynomial function of $\sigma_{ij}(x)$ and $\sigma^{-1}(x) = \sigma^*(x) / \det(\sigma(x))$, where $\sigma^*$ is the adjoint matrix of $\sigma$, 
estimate \eqref{Ho1} follows from the chain rule and the assumptions.
\end{proof}

We have the following important estimates for $(\mathscr{L}^{(\alpha)}_{\sigma} f)(x) = \mathscr{L}^{(\alpha)}_{\sigma(x)} f(x)$.

\begin{lemma}
Let $\rho$ be an admissible weight satisfying $\rho(x) \lesssim \rho(y)(1 + |x - y|^\ell)$ with $\ell \in [0, \alpha \wedge 1)$. Under \eqref{Sig},
for any $\beta > \alpha$ and $\theta \in [0, 1]$, there exists a constant
$C = C(d, \beta, \alpha, \theta, \rho, \ell, \kappa_0) > 0$
such that for all $f \in \bC^{\theta + \beta}_\rho(\mathbb{R}^d)$,
\begin{align}\label{DS4}
\|\mathscr{L}^{(\alpha)}_{\sigma} f\|_{\bC^{\theta}_\rho} \lesssim_{C} \|f\|_{\bC^{\theta + \beta}_\rho},
\end{align}
and for $\gamma$ being the same as in \eqref{Sig} and for all $x_1, x_2 \in \mathbb{R}^d$,
\begin{align}\label{DS5}
\|\mathscr{L}^{(\alpha)}_{\sigma(x_1)} f - \mathscr{L}^{(\alpha)}_{{\sigma(x_2)}} f\|_{\bC^\theta_\rho} \lesssim_C \|f\|_{\bC^{\theta + \beta}_\rho} |x_1 - x_2|^\gamma.
\end{align}
\end{lemma}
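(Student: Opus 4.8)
The plan is to reduce both estimates to the integral representation \eqref{op:La} from Lemma \ref{Le33}, namely $\mathscr{L}^{(\alpha)}_{\sigma(x)} f(v) = \tfrac12\int_{\mR^d}\delta^{(2)}_z f(v)\,\jmath(x,z)|z|^{-d-\alpha}\dif z$, and then to exploit the uniform two-sided bound and $\gamma$-H\"older control on the kernel $\jmath$ recorded in \eqref{Ho1}. The main point is that, because $\beta>\alpha$, the symmetric second difference $\delta^{(2)}_z f$ is integrable against $|z|^{-d-\alpha}\dif z$ near $z=0$ after using $\bC^\beta_\rho$-regularity, while for $|z|$ large the boundedness of $f$ in $L^\infty_\rho$ together with the polynomial control $\rho(v)\lesssim\rho(v\pm z)(1+|z|^\ell)$ with $\ell<\alpha\wedge 1$ ensures integrability at infinity; I would split $\int_{|z|\le 1}$ and $\int_{|z|>1}$ accordingly. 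Concretely, to bound $\|\mathscr{L}^{(\alpha)}_\sigma f\|_{L^\infty_\rho}$ I would write, for $|z|\le 1$, $|\delta^{(2)}_z f(x)|\lesssim |z|^{\beta\wedge 2}\|f\|_{\bC^\beta_\rho}\rho(x)^{-1}(1+|z|^\ell)$ (using Theorem \ref{Le:Cs}, or the symmetric-difference characterization in the Remark when $\beta<2$, and handling $\beta\ge 2$ via Taylor expansion of the second difference), so that $\int_{|z|\le1}|z|^{(\beta\wedge 2)-d-\alpha}\dif z<\infty$ since $\beta\wedge 2>\alpha$; for $|z|>1$ I would bound $|\delta^{(2)}_z f(x)|\le 4\|f\|_{L^\infty_\rho}(\rho(x)^{-1}+\rho(x+z)^{-1}+\rho(x-z)^{-1})\lesssim \|f\|_{L^\infty_\rho}\rho(x)^{-1}(1+|z|^\ell)$, and $\int_{|z|>1}(1+|z|^\ell)|z|^{-d-\alpha}\dif z<\infty$ because $\ell<\alpha$. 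Multiplying through by $\rho(x)$ and using \eqref{Ho1} to absorb $\jmath(x,z)$ into the constant gives the $\theta=0$ case of \eqref{DS4}.

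For the full $\bC^\theta_\rho$-norm in \eqref{DS4}, I would commute the difference operator $\delta_v$ with $\mathscr{L}^{(\alpha)}_{\sigma}$. The subtlety is that $\mathscr{L}^{(\alpha)}_{\sigma(x)}f(x)$ depends on $x$ both through the evaluation point and through $\sigma(x)$, hence through $\jmath(x,\cdot)$; so $\delta_v(\mathscr{L}^{(\alpha)}_\sigma f)(x)$ splits into two terms,
\begin{align*}
\tfrac12\int_{\mR^d}\delta^{(2)}_z(\delta_v f)(x)\,\frac{\jmath(x+v,z)}{|z|^{d+\alpha}}\dif z
+\tfrac12\int_{\mR^d}\delta^{(2)}_z f(x)\,\frac{\jmath(x+v,z)-\jmath(x,z)}{|z|^{d+\alpha}}\dif z.
\end{align*}
The first term is handled exactly as above applied to $\delta_v f$, which lies in $\bC^{(\theta+\beta)-1}_\rho$ (or one iterates difference operators $[\theta]+1$ times and uses Theorem \ref{Le:Cs}(i)), picking up the factor $|v|^\theta\|f\|_{\bC^{\theta+\beta}_\rho}$; the second term uses $|\jmath(x+v,z)-\jmath(x,z)|\le\widetilde\kappa_0|v|^\gamma\le\widetilde\kappa_0|v|^{\theta}$ for $|v|\le 1$ when $\gamma\ge\theta$ — and more carefully, for general $\theta$, I would interpolate this H\"older increment against the uniform bound $|\jmath(x+v,z)-\jmath(x,z)|\le 2\widetilde\kappa_0$ to extract exactly a $|v|^{\theta}$-type modulus — combined with the same integrability splitting for $\delta^{(2)}_z f(x)$ as in the $\theta=0$ case. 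Summing over the required number of iterated differences and invoking the equivalent norm from Theorem \ref{Le:Cs}(i) yields \eqref{DS4}.

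Estimate \eqref{DS5} is then almost immediate from the same representation:
\begin{align*}
\mathscr{L}^{(\alpha)}_{\sigma(x_1)}f(v)-\mathscr{L}^{(\alpha)}_{\sigma(x_2)}f(v)=\tfrac12\int_{\mR^d}\delta^{(2)}_z f(v)\,\frac{\jmath(x_1,z)-\jmath(x_2,z)}{|z|^{d+\alpha}}\dif z,
\end{align*}
and since the evaluation point $v$ no longer couples to the parameter, applying the bound $|\jmath(x_1,z)-\jmath(x_2,z)|\le\widetilde\kappa_0|x_1-x_2|^\gamma$ from \eqref{Ho1} and repeating the near-zero/far-field integrability estimates on $\delta^{(2)}_z f$ — now in the $\bC^\theta_\rho$-norm in $v$, using the commutator argument of the previous paragraph but with the kernel difference already carrying the factor $|x_1-x_2|^\gamma$ — gives $\|\mathscr{L}^{(\alpha)}_{\sigma(x_1)}f-\mathscr{L}^{(\alpha)}_{\sigma(x_2)}f\|_{\bC^\theta_\rho}\lesssim\|f\|_{\bC^{\theta+\beta}_\rho}|x_1-x_2|^\gamma$. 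I expect the main obstacle to be bookkeeping rather than conceptual: carefully tracking how the admissible-weight factor $(1+|z|^\ell)$ propagates through the iterated differences and through the product rule \eqref{DS1} for $\delta_v$, and verifying at each stage that the residual power of $|z|$ at the origin stays strictly below $-d+\alpha$ is the delicate part (this is exactly where the hypotheses $\beta>\alpha$ and $\ell<\alpha\wedge 1$ are used and cannot be relaxed); the large-$|z|$ behaviour and the kernel-H\"older estimates are routine once \eqref{Ho1} is in hand.
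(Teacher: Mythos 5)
Your proposal is correct and follows essentially the same route as the paper: the representation \eqref{op:La} together with the kernel bounds \eqref{Ho1}, the product-rule splitting of $\delta_v\mathscr{L}^{(\alpha)}_{\sigma}f$ into a $\delta_v\delta^{(2)}_z f$ term and a kernel-increment term, the near-zero/far-field integrability coming from $\beta>\alpha$ and $\ell<\alpha\wedge 1$ (the paper packages your explicit splitting as the factor $(1+|z|^\ell)(|z|^{\beta_1}\wedge 1)$ with $\beta_1\in(\alpha,\beta]$, after reducing WLOG to $\beta\in(\alpha,2)$), and the same kernel-difference argument, with $|\jmath(x_1,z)-\jmath(x_2,z)|\le\widetilde\kappa_0|x_1-x_2|^\gamma$, for \eqref{DS5}. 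One caveat: your parenthetical suggestion to interpolate the increment bound $|\jmath(x+v,z)-\jmath(x,z)|\le\widetilde\kappa_0|v|^\gamma$ against the uniform bound so as to extract $|v|^\theta$ when $\theta>\gamma$ is not sound (for small $|v|$ the minimum is $|v|^\gamma\gg|v|^\theta$), but the paper's own proof likewise simply uses $|v|^\gamma\lesssim|v|^\theta$, i.e.\ effectively $\theta\le\gamma$, and only ever applies the lemma in that regime, so this does not set your argument apart from theirs.
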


\begin{proof}
Without loss of generality, we assume $\beta \in (\alpha, 2)$.
By Lemma \ref{Le33} and \eqref{DS1}, we have
\begin{align*}
|\delta_v \mathscr{L}^{(\alpha)}_\sigma f(x)|
&\leq \frac{1}{2} \int_{\mathbb{R}^d} \left( |\delta_v \delta^{(2)}_z f(x)| \cdot |\jmath(x + v, z)|
+ |\delta^{(2)}_z f(x)| \cdot |\delta_v \jmath(\cdot, z)(x)| \right) \frac{\mathrm{d} z}{|z|^{d + \alpha}} \\
&\overset{\eqref{Ho1}}{\leq} \frac{\widetilde\kappa_0}{2} \int_{\mathbb{R}^d} \left( |\delta_v \delta^{(2)}_z f(x)| + |v|^\gamma |\delta^{(2)}_z f(x)| \right) \frac{\mathrm{d} z}{|z|^{d + \alpha}}.
\end{align*}
For any $\theta \in [0, 1]$,
by (ii) of Theorem \ref{Le:Cs},
one can choose $\beta_1 \in (\alpha, \beta]$ such that for any $|v| \leq 1$ and $z \in \mathbb{R}^d$,
\begin{align}\label{DS6}
\|\delta_v \delta^{(2)}_z f\|_{L^\infty_\rho} \lesssim |v|^{\theta} \|\delta^{(2)}_z f\|_{\bC^{\theta}_\rho}
\lesssim |v|^\theta (1 + |z|^\ell) \left( (|z|^{\beta_1} \|f\|_{\bC^{\theta + \beta}_\rho}) \wedge \|f\|_{\bC^{\theta}_\rho} \right)
\end{align}
and
$$
\|\delta^{(2)}_z f\|_{L^\infty_\rho} \lesssim (1 + |z|^\ell) \left( (|z|^{\beta_1} \|f\|_{\bC^{\beta}_\rho}) \wedge \|f\|_{L^\infty_\rho} \right).
$$
Hence, for any $\theta \in [0, 1]$, there exists a $\beta_1 \in (\alpha, \beta]$ such that
\begin{align}\label{DS2}
\|\delta_v \mathscr{L}^{(\alpha)}_\sigma f\|_{L^\infty_\rho}
\lesssim |v|^\theta \|f\|_{\bC^{\theta + \beta}_\rho} \int_{\mathbb{R}^d} (1 + |z|^\ell)(|z|^{\beta_1} \wedge 1) \frac{\mathrm{d} z}{|z|^{d + \alpha}}
\lesssim |v|^\theta \|f\|_{\bC^{\theta + \beta}_\rho},
\end{align}
and
\begin{align}\label{DS3}
\|\mathscr{L}^{(\alpha)}_\sigma f\|_{L^\infty_\rho}
\lesssim \|f\|_{\bC^{\beta}_\rho} \int_{\mathbb{R}^d} (1 + |z|^\ell)(|z|^{\beta_1} \wedge 1) \frac{\mathrm{d} z}{|z|^{d + \alpha}}
\lesssim \|f\|_{\bC^{\beta}_\rho}.
\end{align}
Combining \eqref{DS2} and \eqref{DS3}, we obtain \eqref{DS4}.

On the other hand, by Lemma \ref{Le33}, we have
\begin{align*}
|\delta_v(\mathscr{L}^{(\alpha)}_{\sigma(x_1)} f - \mathscr{L}^{(\alpha)}_{\sigma(x_2)}f)(x)|
\leq \int_{\mathbb{R}^d} |\delta_v \delta^{(2)}_z f(x)| \cdot |\jmath(x_1, z) - \jmath(x_2, z)| \frac{\mathrm{d} z}{|z|^{d + \alpha}}.
\end{align*}
Let $\theta \in [0, 1]$ and $\beta_1$ be as in \eqref{DS6}. By \eqref{DS6} and \eqref{Ho1}, we have
$$
\|\delta_v(\mathscr{L}^{(\alpha)}_{\sigma(x_1)} f - \mathscr{L}^{(\alpha)}_{\sigma(x_2)} f)\|_{L^\infty_\rho}
\leq |v|^\theta |x_1 - x_2|^\gamma \|f\|_{\bC^{\theta + \beta}_\rho}
\int_{\mathbb{R}^d} (1 + |z|^\ell)(|z|^{\beta_1} \wedge 1) \frac{\mathrm{d} z}{|z|^{d + \alpha}}
$$
and
$$
\|\mathscr{L}^{(\alpha)}_{\sigma(x_1)} f - \mathscr{L}^{(\alpha)}_{\sigma(x_2)}f\|_{L^\infty_\rho}
\leq  |x_1 - x_2|^\gamma \|f\|_{\bC^{\beta}_\rho}
\int_{\mathbb{R}^d} (1 + |z|^\ell)(|z|^{\beta_1} \wedge 1) \frac{\mathrm{d} z}{|z|^{d + \alpha}}.
$$
Thus we obtain \eqref{DS5}.
\end{proof}

\subsection{Subcritical case: Monotonicity drifts}

In the following, we restrict ourself to $\alpha\in(1,2)$ and assume that $b$ and $\sigma$ satisfy
{\bf (H$^1$)} and for some $\kappa_1,\kappa_2>0$ and all $x,y\in\mR^d$,
\begin{align}\label{mon}
\<x-y,b(x)-b(y)\>\leq-\kappa_1|x-y|^2+\kappa_2.
\end{align}
Clearly, \eqref{mon} implies \eqref{Dis} and there is a unique invariant probability measure $\mu$ associated with $P_t$ so that
$$
\int_{\mR^d}|x|\mu(\dif x)<\infty.
$$
Moreover, under the above assumptions,
 by \cite{Wang20}, there
exist constants $C,\lambda>0$ depending on $\Theta$ (see \eqref{The}) such that
for any Lipschitz function $\varphi$,
\begin{align}\label{11}
\|\nabla P_t\varphi\|_\infty \lesssim_C\e^{-\lambda t}\|\nabla \varphi\|_\infty.
\end{align}

\bl\label{le:n2Pt}
Under $(\mathbf{H}^1)$ and \eqref{mon}, there exists a constant $C=C(\Theta)>0$ such that for any Lipschitz function $\varphi$,
\begin{align}\label{IN:N2Pt}
\|\nabla^2 P_t\varphi\|_\infty\lesssim_C t^{-1/\alpha}\|\nabla\varphi\|_\infty,\ \ \forall t\in(0,1].
\end{align}
\el
\begin{proof}
Without loss of generality, we assume $\varphi\in \bC^2$ and denote by $[A,B]:=AB-BA$ the commutator of operators $A,B$.
Noting that by \eqref{semi} and \eqref{Duh},
$$
 \nabla P_t\varphi-P_t\nabla \varphi
 =\int^t_0\p_s(P_{t-s}\nabla P_s\varphi)\dif s=\int^t_0P_{t-s}[\nabla,\sL]P_s\varphi\dif s,
$$
we have
\begin{align}\label{n2Pt}
\nabla^2 P_t\varphi=\nabla P_t\nabla\varphi+\int^t_0\nabla P_{t-s}[ \nabla,\sL]P_s\varphi\dif s.
\end{align}
Moreover,  we clearly have
$$
[\nabla, b\cdot\nabla] P_s\varphi(x)=\nabla b\cdot\nabla  P_s\varphi(x)
$$
and by Lemma \ref{Le33},
$$
[\nabla, \sL^{(\alpha)}_\sigma]  P_s\varphi(x)
=\frac12\int_{\mR^d}\delta^{(2)}_z P_s\varphi(x)\frac{\nabla\jmath(\cdot,z)(x)\dif z}{|z|^{d+\alpha}}.
$$
By \eqref{Def1}, {\bf (H$^1$)} and \eqref{11}, we have
\begin{align*}
\|[\nabla, \sL] P_s\varphi\|_\infty
&\leq \|[\nabla, b\cdot\nabla] P_s\varphi\|_\infty+\|[\nabla, \sL^{(\alpha)}_\sigma]  P_s\varphi\|_\infty\\
&\lesssim \|\nabla  P_s\varphi\|_\infty
+\int_{\mR^d} (\|\nabla  P_s\varphi\|_\infty|z|)\wedge (\|\nabla^2  P_s\varphi\|_\infty|z|^2)\frac{\dif z}{|z|^{d+\alpha}}\\
&\lesssim\|\nabla \varphi\|_\infty+\|\nabla^2  P_s\varphi\|_\infty.
\end{align*}
Now by \eqref{n2Pt}, we have for $t\in(0,1]$,
\begin{align*}
\|\nabla^2 P_t\varphi\|_\infty
&\leq\|\nabla P_t\nabla\varphi\|_\infty+\int^t_0\|\nabla P_{t-s}[ \nabla,\sL]P_s\varphi\|_\infty\dif s\\
&\overset{\eqref{Grad01}}\lesssim t^{-1/\alpha}\|\nabla\varphi\|_\infty+\int^t_0(t-s)^{-1/\alpha}\|[ \nabla,\sL]P_s\varphi\|_\infty\dif s\\
&\lesssim t^{-1/\alpha}\|\nabla\varphi\|_\infty+\int^t_0(t-s)^{-1/\alpha}(\|\nabla\varphi\|_\infty+\|\nabla^2 P_s\varphi\|_\infty)\dif s.
\end{align*}
Since $\alpha\in(1,2)$, by Gronwall-Volterra's inequality (see \cite{Z10}), we get \eqref{IN:N2Pt}.
\end{proof}

Now we can show the following main result.
\begin{theorem}\label{Le23}
Under $(\mathbf{H}^1)$ and \eqref{mon}, there is a constant $C=C(\Theta)>0$ such that
for any Lipschitz function $g$ and $f=\int^\infty_0 P_t\bar g\dif t$ with $\bar g:=g-\mu(g)$,
\begin{align}\label{92-8}
\|\nabla f\|_\infty+\|\nabla^2 f\|_\infty+\|\sL^{(\alpha)}_\sigma f\|_{\bC^{2-\alpha}}
\lesssim_C\|\nabla g\|_\infty.
\end{align}
Moreover,  for all $x_1,x_2\in\mR^d$,
\begin{align}\label{92-7}
\|\sL^{(\alpha)}_{\sigma(x_1)} f-\sL^{(\alpha)}_{\sigma(x_2)} f\|_\infty
\lesssim_C\|\nabla g\|_\infty|x_1-x_2|.
\end{align}
\end{theorem}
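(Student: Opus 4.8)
The plan is to combine three ingredients already at hand: the exponential gradient contraction \eqref{11}, the short-time second-order smoothing estimate of Lemma \ref{le:n2Pt}, and the symmetric second-difference representation \eqref{op:La} of $\sL^{(\alpha)}_{\sigma(x)}$ from Lemma \ref{Le33}. Since here $g\in{\rm Lip}({\bf d})$ with ${\bf d}(x,y)=|x-y|$, the centred function $\bar g=g-\mu(g)$ is Lipschitz with $\|\nabla\bar g\|_\infty=\|\nabla g\|_\infty$ and $\mu(\bar g)=0$; under \eqref{mon} the semigroup $(P_t)$ is exponentially ergodic in $\cW_1$ and $\mu$ has finite first moment, so $|P_t\bar g(x)|\lesssim\e^{-\lambda t}(1+|x|)\|\nabla g\|_\infty$ and hence $f=\int_0^\infty P_t\bar g\,\dif t$ is a well-defined function of at most linear growth. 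First I would justify differentiation under the integral sign and use \eqref{11} to write $\nabla f=\int_0^\infty\nabla P_t\bar g\,\dif t$, so that $\|\nabla f\|_\infty\le\int_0^\infty\|\nabla P_t\bar g\|_\infty\,\dif t\lesssim\|\nabla g\|_\infty$.

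Next I would treat $\nabla^2 f=\int_0^\infty\nabla^2 P_t\bar g\,\dif t$ by splitting the integral at $t=1$. On $(0,1]$, Lemma \ref{le:n2Pt} gives $\|\nabla^2 P_t\bar g\|_\infty\lesssim t^{-1/\alpha}\|\nabla g\|_\infty$, which is integrable near $0$ precisely because $\alpha>1$. For $t\ge1$ I would invoke the semigroup property $P_t\bar g=P_1(P_{t-1}\bar g)$, apply Lemma \ref{le:n2Pt} at time $1$ to the (globally Lipschitz) function $P_{t-1}\bar g$, and then \eqref{11}, obtaining $\|\nabla^2 P_t\bar g\|_\infty\lesssim\|\nabla P_{t-1}\bar g\|_\infty\lesssim\e^{-\lambda(t-1)}\|\nabla g\|_\infty$; integrating then gives $\|\nabla^2 f\|_\infty\lesssim\|\nabla g\|_\infty$. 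I expect this splitting to be the one genuinely delicate point: since $\bar g$ need not be bounded, one cannot run the Gronwall argument of Lemma \ref{le:n2Pt} directly on $[1,\infty)$, and it is the combination of the time-$1$ smoothing estimate with the contraction \eqref{11} that supplies the exponential decay needed for integrability in $t$.

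With $\|\nabla f\|_\infty+\|\nabla^2 f\|_\infty\lesssim\|\nabla g\|_\infty$ in hand, the remaining bounds follow along the lines of \eqref{DS4}--\eqref{DS5} but now in the unweighted space, because the representation \eqref{op:La} involves only the second difference $\delta^{(2)}_z f$: the linear growth of $f$ itself is irrelevant, and only the uniform bounds $|\delta^{(2)}_z f(x)|\lesssim\|\nabla g\|_\infty(|z|^2\wedge|z|)$ and $|\delta_v\delta^{(2)}_z f(x)|=|\delta^{(2)}_z(\delta_v f)(x)|\lesssim\|\nabla g\|_\infty(|z|^2\wedge(|v|\,|z|))$, both from Taylor's formula, enter. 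Feeding these into \eqref{op:La}, using $\widetilde\kappa_0^{-1}\le\jmath(x,z)\le\widetilde\kappa_0$ together with $|\delta_v\jmath(\cdot,z)(x)|\le\widetilde\kappa_0|v|$ from \eqref{Ho1} (the latter combined with the product rule \eqref{DS1}), and splitting each $z$-integral at $|z|=|v|$, the singular integrals $\int_{\mR^d}(|z|^2\wedge|z|)|z|^{-d-\alpha}\,\dif z$ and $\int_{\mR^d}(|z|^2\wedge(|v||z|))|z|^{-d-\alpha}\,\dif z\lesssim|v|^{2-\alpha}$ converge precisely because $1<\alpha<2$; one then reads off $\|\sL^{(\alpha)}_\sigma f\|_\infty\lesssim\|\nabla g\|_\infty$ and, since $2-\alpha\in(0,1)$ and $|v|\le|v|^{2-\alpha}$ for $|v|\le1$, the bound $\|\delta_v\sL^{(\alpha)}_\sigma f\|_\infty\lesssim\|\nabla g\|_\infty|v|^{2-\alpha}$, i.e.\ \eqref{92-8}. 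Finally, \eqref{92-7} is the same computation applied to $\sL^{(\alpha)}_{\sigma(x_1)}f(y)-\sL^{(\alpha)}_{\sigma(x_2)}f(y)=\tfrac12\int_{\mR^d}\delta^{(2)}_z f(y)\,(\jmath(x_1,z)-\jmath(x_2,z))\,|z|^{-d-\alpha}\,\dif z$, using $|\jmath(x_1,z)-\jmath(x_2,z)|\le\widetilde\kappa_0|x_1-x_2|$ from \eqref{Ho1} and $|\delta^{(2)}_z f(y)|\lesssim\|\nabla g\|_\infty(|z|^2\wedge|z|)$, together with the convergence of the same singular integral.
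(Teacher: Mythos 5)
Your proposal is correct and follows essentially the same route as the paper: $\|\nabla f\|_\infty$ via \eqref{11}, $\|\nabla^2 f\|_\infty$ by splitting at $t=1$ and using Lemma \ref{le:n2Pt} together with $P_t\bar g=P_1P_{t-1}\bar g$ and \eqref{11}, and then the $\bC^{2-\alpha}$ bound and \eqref{92-7} from the representation \eqref{op:La}, the product rule \eqref{DS1}, the bounds \eqref{Ho1} on $\jmath$, and the elementary estimates $|\delta^{(2)}_z f|\lesssim\|\nabla g\|_\infty(|z|^2\wedge|z|)$, $|\delta_v\delta^{(2)}_z f|\lesssim\|\nabla g\|_\infty(|z|^2\wedge(|v||z|))$. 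Your extra remarks on well-definedness of $f$ and on absorbing the $|v|$ term via $|v|\le|v|^{2-\alpha}$ for $|v|\le1$ only make explicit what the paper leaves implicit.
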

\begin{proof}
By \eqref{11}, it is easy to see that
\begin{align}\label{In:Nf}
\|\nabla f\|_\infty\leq\int^\infty_0\|\nabla P_t\bar g\|_\infty\dif t\lesssim\int^\infty_0\e^{-\lambda t}\|\nabla \bar g\|_\infty\dif t
\lesssim \|\nabla g\|_\infty.
\end{align}
Noting that for $t>1$,
$$
\|\nabla^2P_t\bar g\|_\infty=\|\nabla^2P_1P_{t-1}\bar g\|_\infty\overset{\eqref{IN:N2Pt}}\lesssim \|\nabla P_{t-1}\bar g\|_\infty\overset{\eqref{11}}\lesssim \e^{-\lambda t}\|\nabla g\|_\infty,
$$
we have by Lemma \ref{le:n2Pt},
\begin{align}
\|\nabla^2 f\|_\infty&\leq \int^1_0\|\nabla^2 P_t\bar g\|_\infty\dif t+\int^\infty_1\|\nabla^2P_t\bar g\|_\infty\dif t\no\\
&\lesssim \int^1_0t^{-1/\alpha}\|\nabla\bar g\|_\infty\dif t+\int^\infty_1\e^{-\lambda t}\|\nabla\bar g\|_\infty\dif t\lesssim \|\nabla g\|_\infty.\label{N2fNg}
\end{align}
On the other hand,  for $v\in\mR^d$, by \eqref{op:La} we have
\begin{align*}
\delta_v\sL^{(\alpha)}_\sigma f(x)
=\frac12\int_{\mR^d}\delta_v
\Big(\delta^{(2)}_yf\jmath(\cdot,y)\Big)(x)
\frac{\dif y}{|y|^{d+\alpha}}.
\end{align*}
Since $\|\jmath(\cdot,y)\|_\infty+\|\nabla\jmath(\cdot,y)\|_\infty\leq\tilde\kappa_0$ by Lemma \ref{Le33} with $\gamma=1$, we have
\begin{align*}
\|\delta_v(\delta^{(2)}_yf\jmath(\cdot,y))\|_\infty
&\leq\|\delta_v\delta^{(2)}_yf\|_\infty\|\jmath(\cdot,y)\|_\infty
+\|\delta^{(2)}_yf\|_\infty\|\delta_v\jmath(\cdot,y)\|_\infty\\
&\leq\widetilde\kappa_0\Big(\|\delta_v\delta^{(2)}_yf\|_\infty+\|\delta^{(2)}_yf\|_\infty|v|\Big).
\end{align*}
By \eqref{In:Nf} and \eqref{N2fNg}, we have
$$
\|\delta_v\delta^{(2)}_yf\|_\infty\leq \|\nabla^2 f\|_\infty(|y|^2\wedge (|v| |y|))
\lesssim \|\nabla g\|_\infty(|y|^2\wedge (|v| |y|))
$$
and
$$
\|\delta^{(2)}_yf\|_\infty\leq (\|\nabla^2 f\|_\infty|y|^2)\wedge (\|\nabla f\|_\infty |y|)
\lesssim \|\nabla g\|_\infty(|y|^2\wedge |y|).
$$
Hence, for $|v|\leq 1$,
\begin{align*}
\|\sL^{(\alpha)}_\sigma f\|_\infty
\lesssim& \|\nabla g\|_\infty
\int_{\mR^d}\frac{(|y|^2\wedge |y|)\dif y}{|y|^{d+\alpha}}
\lesssim \|\nabla g\|_\infty
\end{align*}
and
\begin{align*}
\|\delta_v\sL^{(\alpha)}_\sigma f\|_\infty
\lesssim \|\nabla g\|_\infty
\int_{\mR^d}\frac{(|y|^2\wedge (|v| |y|))\dif y}{|y|^{d+\alpha}}\lesssim \|\nabla g\|_\infty|v|^{2-\alpha}.
\end{align*}
Combining the above two estimates, we obtain
$\|\sL^{(\alpha)}_\sigma f\|_{\bC^{2-\alpha}}\lesssim_C\|\nabla g\|_\infty$.
It is similar to \eqref{DS5} that  \eqref{92-7} follows by \eqref{92-8}.
\end{proof}

\subsection{Dissipativity drift}

In this section, we assume that $b$ and $\sigma$ satisfy $(\mathbf{H}^\gamma)$ with $\gamma\in((1-\alpha)_+, 1]$. In addition, we suppose that there exist constants $\kappa_1,\kappa_2>0$ such that for all $x\in\mR^d$,
\begin{align}\label{926-2}
\<x,b(x)\>\leq-\kappa_1|x|^2+\kappa_2.
\end{align}
For given $m\in(0,\alpha)$,
by \cite[Theorem 1.2]{ZZ23}, there is a unique invariant probability measure $\mu$ of $(P_t)_{t\geq 0}$
with finite $m$-order moments,
and there are constants $\lambda, C>0$ depending on $\Theta$ and $m$ such that
\begin{align}\label{eg:x}
\|P_t\varphi-\mu(\varphi)\|_{L^\infty_m}\lesssim_C\e^{-\lambda t}\|\varphi\|_{L^\infty_m}\lesssim_C\e^{-\lambda t}\|\varphi\|_\infty,\ \ t>0.
\end{align}
From this, we clearly have
\begin{align}\label{C0:b}
\sup_{t>0}\|P_t\varphi\|_{L^\infty_m}\lesssim_C\|\varphi\|_{L^\infty_m}.
\end{align}

By \eqref{eg:x} and \eqref{Grad01}, we have the following regularity estimate of $P_tg$.
\bl\label{9261}
For any $\vartheta\in[0,1]$ and $m\in(0,\alpha)$, there is a constant $C=C(\Theta,\vartheta,m)>0$
such that for all $t>0$ and $x\in\mR^d$,
\begin{align}\label{Gr01}
|\nabla P_t g(x)|\lesssim_C \b1_{t\leq 1}t^{\frac{\vartheta\wedge\alpha-1}{\alpha}}\|g\|_{\bC^\vartheta}
+\b1_{t>1}(1\wedge (\e^{-\lambda t}(1+|x|^m)))\|g\|_\infty.
\end{align}
\el
\begin{proof}
For $t\in(0,1]$, noting that
\begin{align*}
\nabla P_t g(x)= \int_{\mR^d}\nabla_x p_t(x,y) g(y)\dif y= \int_{\mR^d} \nabla_x p_t(x,y)(g(y)-g(\theta_t(x)))\dif y,
\end{align*}
by \eqref{Grad01}, \eqref{TG:ep} and the change of variable, we have
\begin{align}
|\nabla P_t {g}(x)|&\lesssim_C  t^{\frac{\alpha-1}{\alpha}}\|g\|_{\bC^\vartheta}\int_{\mR^d}(t^{\frac{1}{\alpha}}+|\theta_t(x)-y|)^{-d-\alpha} (|y-\theta_t(x)|^{\vartheta}\wedge1)\dif y\no\\
&=t^{\frac{\alpha-1}{\alpha}}\|g\|_{\bC^\vartheta}\int_{\mR^d}(t^{\frac{1}{\alpha}}+|y|)^{-d-\alpha} (|y|^{\vartheta}\wedge1)\dif y\lesssim  t^{\frac{\alpha-1}{\alpha}}\|g\|_{\bC^\vartheta}\int_0^\infty \frac{r^{\vartheta}\wedge1}{(t^{\frac{1}{\alpha}}+r)^{\alpha+1}} \dif r\no\\
&\lesssim   t^{\frac{\alpha-1}{\alpha}}\|g\|_{\bC^\vartheta}
\left[t^{-1-\frac1\alpha}\int_0^{t^{\frac{1}{\alpha}}}r^{\vartheta}\dif r
+\int_{t^{\frac{1}{\alpha}}}^{\infty}\frac{r^\vartheta\wedge 1}{r^{\alpha+1}}\dif r\right]\lesssim t^{\frac{\vartheta\wedge\alpha-1}{\alpha}}\|g\|_{\bC^\vartheta}.\label{CA1}
\end{align}
For $t>1$, noting that
$$
\nabla P_t g=\nabla P_1P_{t-1} g=\nabla P_1P_{t-1} (g-\mu(g)),
$$
by \eqref{eg:x} and \eqref{TG:ep}, we have
\begin{align}
|\nabla P_t g(x)|&\leq  \int_{\mR^d}|\nabla_x p_1(x,y)| |P_{t-1}g(y)-\mu(g)|\dif y\no\\
&\lesssim  \|g\|_\infty\int_{\mR^d} p_1(x,y)
\left(1\wedge (\e^{-\lambda (t-1)}(1+|y|^m))\right)\dif y\no\\
&\lesssim \|g\|_\infty(1\wedge (\e^{-\lambda t}(1+|x|^m))),\label{CA2}
\end{align}
where we use the fact that
$$
 \int_{\mR^d} p_1(x,y)(1+|y|^m)\dif y= 1+\mE |X^x_1|^m\stackrel{\eqref{C0:b}}{\lesssim} 1+|x|^m.
$$
Combining \eqref{CA1} and \eqref{CA2}, we obtain \eqref{Gr01} and complete the proof.
\end{proof}
\br\label{jys1}\rm
For the short-time estimate, in order to make $\nabla P_t g$ integrable around $0$, we need to require
$
\frac{\vartheta \wedge \alpha - 1}{\alpha} > -1,
$
which leads to $\alpha > \frac{1}{2}$. This is also the reason that we cannot tackle the case $\alpha \in \left(0, \frac{1}{2}\right]$.
For the large-time estimate, one cannot take $m = 0$ in \eqref{Gr01}.
This is because under dissipative condition \eqref{926-2} we cannot take $m = 0$ in \eqref{eg:x}.
\er

To show the regularity of the solution to the Stein/Poisson equation,
we need the following estimate.
\bl
For any $\alpha\in(0,2)$, there is a constant $C=C(d,\alpha)>0$ such that
for all $f:\mR^d\to\mR$ with $\|\Delta^{\frac{\alpha}{2}}f\|_\infty<\infty,$
\begin{align}\label{DQ1*}
|\delta^{(2)}_vf(x)|+|\delta^2_vf(x)|\lesssim_C| v|^{\alpha}(\sU_{f,|v|}(x)+\sU_{f,|v|}(x\pm v)),\ \ \forall x,v\in\mR^d,
\end{align}
where $\sU_{f,r}(x\pm v)=\sU_{f,r}(x+v)+\sU_{f,r}(x-v)$, and for $r>0$,
\begin{align*}
\sU_{f,r}(x):=\sum_{n=-\infty}^\infty \frac{2^{-|n|(\alpha\wedge(2-\alpha))}}{(2^{n}r)^d}\int_{|z|\leq 2^{n+1}r}|\Delta^{\frac{\alpha}{2}}f(x-z)|\dif z.
\end{align*}
\el
\begin{proof}
We only consider the second order symmetric difference oeprator $\delta^{(2)}$.
By suitable smooth approximation, we may assume
$f$ is smooth and has compact support.
Note that by Fourier's transform,
$$
f(x)=\Delta^{-\frac\alpha 2}\Delta^{\frac{\alpha}{2}}f(x)=K*\Delta^{\frac{\alpha}{2}}f(x),
$$
where $K(x)=c_{d,\alpha}|x|^{-d+\alpha}$ and the constant  $c_{d,\alpha}>0$ is
chosen
so that $\hat K(\xi)=|\xi|^{-\alpha}$ (see \cite[p. 117]{S70}).
Fix $v\in\R^d$.
From this, we have
\begin{align*}
|\delta^{(2)}_vf(x)|=|\delta^{(2)}_vK*\Delta^{\frac{\alpha}{2}}f(x)|
&\leq\left(\int_{|z|\leq 2|v|}+\int_{|z|>2|v|}\right)|\delta^{(2)}_vK(z)|\cdot|\Delta^{\frac{\alpha}{2}}f(x-z)|\dif z.
\end{align*}
For $|z|>2|v|$, by Taylor's expansion we have
\begin{align*}
|\delta^{(2)}_v K(z)|&\leq |v|^2\int^1_0\int^1_{-1}|\nabla^2 K(z+\theta\theta' v)|\dif\theta\dif\theta'\\
&\lesssim |v|^2\int^1_0\int^1_{-1}|z+\theta\theta' v|^{\alpha-d-2}\dif\theta\dif\theta'\lesssim |v|^2|z|^{\alpha-d-2}.
\end{align*}
Hence,  
\begin{align*}
\int_{|z|> 2|v|}|\delta^{(2)}_vK(z)|\cdot&|\Delta^{\frac{\alpha}{2}}f(x-z)|\dif z
\lesssim |v|^2\int_{|z|> 2|v|}\frac{|\Delta^{\frac{\alpha}{2}}f(x-z)|}{|z|^{d+2-\alpha}}\dif z\\
&= |v|^2\sum_{n=1}^\infty\int_{2^n|v|\leq |z|<2^{n+1}|v|}\frac{|\Delta^{\frac{\alpha}{2}}f(x-z)|}{|z|^{d+2-\alpha}}\dif z\\
&\leq |v|^2\sum_{n=1}^\infty(2^n|v|)^{\alpha-d-2}\int_{|z|<2^{n+1}|v|}|\Delta^{\frac{\alpha}{2}}f(x-z)|\dif z\\
&=|v|^\alpha\sum_{n=1}^\infty\frac{2^{-n(2-\alpha)}}{(2^n|v|)^d}\int_{|z|<2^{n+1}|v|}|\Delta^{\frac{\alpha}{2}}f(x-z)|\dif z
\leq |v|^\alpha\sU_{f,|v|}(x).
\end{align*}
On the other hand, we also have
\begin{align*}
\int_{|z|\leq 2|v|}|K(z)|\cdot&|\Delta^{\frac{\alpha}{2}}f(x-z)|\dif z
=\sum_{n=0}^\infty\int_{2^{-n}|v|<|z|\leq 2^{-n+1}|v|}|K(z)|\cdot|\Delta^{\frac{\alpha}{2}}f(x-z)|\dif z\\
&\leq 2\sum_{n=0}^\infty (2^{-n}|v|)^{\alpha-d}\int_{2^{-n}|v|<|z|\leq 2^{-n+1}|v|}|\Delta^{\frac{\alpha}{2}}f(x-z)|\dif z\\
&\leq 2|v|^\alpha\sum_{n=0}^\infty \frac{2^{-n\alpha}}{(2^{-n}|v|)^d}\int_{|z|\leq 2^{-n+1}|v|}|\Delta^{\frac{\alpha}{2}}f(x-z)|\dif z
\leq 2|v|^\alpha\sU_{f,|v|}(x),
\end{align*}
and similarly,
\begin{align*}
&\int_{|z|\leq 2|v|}|K(z\pm v)|\cdot|\Delta^{\frac{\alpha}{2}}f(x-z)|\dif z
\leq\int_{|z\pm v|\leq 4|v|}|K(z\pm v)|\cdot|\Delta^{\frac{\alpha}{2}}f(x-z)|\dif z\\
&\qquad\qquad=\int_{|z|\leq 4|v|}|K(z)|\cdot|\Delta^{\frac{\alpha}{2}}f(x\pm v-z)|\dif z
\lesssim |v|^\alpha\sU_{f,|v|}(x\pm v).
\end{align*}
Combining the above calculations, we obtain the desired estimate.
\end{proof}

Using the above lemma, we can show the following result, which will be used to establish the large time estimate.

\begin{lemma}\label{Le:33}
Let $m\in[0,\alpha\wedge(2-\alpha))$ and $\beta\in[0,\alpha]$. For any $T>0$, there is a constant $C=C(\Theta,m,\beta, T)>0$ such that for all $x,v\in\mathbb{R}^d$ and $t\in(0,T]$,
\begin{align}\label{LK1}
|\delta^2_v P_tf(x)|+|\delta^{(2)}_v P_tf(x)|\lesssim_C
|v|^\beta t^{-\frac{\beta}{\alpha}} (1+|x|^m+|v|^m)\|f\|_{L^\infty_m}.
\end{align}
\end{lemma}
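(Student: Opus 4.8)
The plan is to interpolate between a crude bound with no singularity in $t$ (the case $\beta=0$) and the case $\beta=\alpha$, where the full power $t^{-1}$ appears and is exactly matched by the gradient estimate \eqref{Grad01}. For $\beta=0$ the claim is immediate: by \eqref{C0:b} one has $|\delta^2_v P_t f(x)|+|\delta^{(2)}_v P_t f(x)|\lesssim \|P_t f\|_{L^\infty_m}(\rho_m(x)^{-1}+\rho_m(x\pm v)^{-1})^{-1}\lesssim (1+|x|^m+|v|^m)\|f\|_{L^\infty_m}$, using only that the second difference is bounded by four point evaluations and that the weight $\rho_m$ is admissible. So the heart of the matter is the endpoint $\beta=\alpha$.

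For $\beta=\alpha$ I would apply the previous lemma, estimate \eqref{DQ1*}, to the function $P_t f$, which gives
\begin{align*}
|\delta^{(2)}_v P_t f(x)|+|\delta^2_v P_t f(x)|\lesssim_C |v|^\alpha\big(\sU_{P_t f,|v|}(x)+\sU_{P_t f,|v|}(x\pm v)\big),
\end{align*}
provided $\|\Delta^{\alpha/2}P_t f\|_\infty<\infty$, which is guaranteed for $t>0$ by \eqref{Grad01} applied with $P_t f=P_{t/2}P_{t/2}f$ (write $\Delta^{\alpha/2}P_t g(x)=\int \Delta^{\alpha/2}_x p_{t/2}(x,y)P_{t/2}f(y)\dif y$ and use $\|P_{t/2}f\|_\infty\lesssim\|f\|_{L^\infty_m}\cdot(\text{polynomial})$; more carefully one should keep the weight, see below). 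It then remains to control $\sU_{P_t f,|v|}(x)$. By definition this is a dyadic sum of local averages of $|\Delta^{\alpha/2}P_t f|$ over balls of radius $2^{n+1}|v|$ centered near $x$. Using \eqref{Grad01} in the form $|\Delta^{\alpha/2}p_t(\cdot,y)(x)|\lesssim t^{-1}p_t(x,y)$ together with the two-sided bound \eqref{TG:ep}, one gets $|\Delta^{\alpha/2}P_t f(w)|\lesssim t^{-1}\int p_t(w,y)|f(y)|\dif y\lesssim t^{-1}\|f\|_{L^\infty_m}\,\E\rho_m(X^w_t)^{-1}\cdot(\cdots)$, hence $|\Delta^{\alpha/2}P_t f(w)|\lesssim t^{-1}(1+|w|^m)\|f\|_{L^\infty_m}$ by \eqref{C0:b}. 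Plugging this into the dyadic sum defining $\sU$, and using that $\frac{1}{(2^n|v|)^d}\int_{|z|\le 2^{n+1}|v|}(1+|x-z|^m)\dif z\lesssim 1+|x|^m+(2^n|v|)^m$, the geometric factor $2^{-|n|(\alpha\wedge(2-\alpha))}$ makes the series converge (here $|v|\le 1$ and $t\le T$ are used to absorb the finitely many positive powers of $|v|$), giving $\sU_{P_t f,|v|}(x)\lesssim t^{-1}(1+|x|^m)\|f\|_{L^\infty_m}$, and similarly at $x\pm v$. This yields \eqref{LK1} for $\beta=\alpha$.

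Finally, for general $\beta\in(0,\alpha)$, write $\beta=\theta\alpha$ with $\theta\in(0,1)$ and interpolate the two endpoint bounds: from $|\delta^{(2)}_v g(x)|\le (\text{bound}_0)^{1-\theta}(\text{bound}_\alpha)^\theta$ where the exponent of $|v|$ combines as $0\cdot(1-\theta)+\alpha\cdot\theta=\beta$, the power of $t$ combines as $0\cdot(1-\theta)+(-1)\cdot\theta=-\beta/\alpha$, and the weight factor $(1+|x|^m+|v|^m)^{1-\theta}(1+|x|^m+|v|^m)^\theta=1+|x|^m+|v|^m$; the same for $\delta^2_v$. (Alternatively one can carry a parameter $\beta_1\in[0,\alpha]$ through \eqref{DQ1*} directly by splitting the integrals at scale $|v|$ and using the crude bound on $|\Delta^{\alpha/2}P_t f|$ only on the far region, but the interpolation route is cleanest.)

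The main obstacle is the bookkeeping of the spatial weight $\rho_m$ through the nonlocal operator: one must verify that $|\Delta^{\alpha/2}P_t f(w)|\lesssim t^{-1}(1+|w|^m)\|f\|_{L^\infty_m}$ genuinely holds, i.e. that applying $\Delta^{\alpha/2}$ to the kernel and then integrating against the $m$-growing $|f|$ does not produce extra growth or extra time-singularity, and then that the dyadic sum defining $\sU_{P_t f,|v|}$ still converges after the polynomial weight is inserted — this is where the restriction $m<\alpha\wedge(2-\alpha)$ enters, since the factor $(2^n|v|)^m$ from the volume estimate must be beaten by $2^{-|n|(\alpha\wedge(2-\alpha))}$ as $n\to+\infty$.
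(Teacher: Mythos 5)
Your proposal follows essentially the same route as the paper: reduce to the endpoints $\beta=0$ (trivial by \eqref{C0:b}) and $\beta=\alpha$ by interpolation, then use \eqref{DQ1*} together with $|\Delta^{\alpha/2}p_t(\cdot,y)(x)|\lesssim t^{-1}p_t(x,y)$ from \eqref{Grad01}, the moment bound $\mE|f(X^w_t)|\lesssim(1+|w|^m)\|f\|_{L^\infty_m}$, and the convergence of the dyadic sum, which is exactly where $m<\alpha\wedge(2-\alpha)$ enters. The one weak spot is your justification for applying \eqref{DQ1*} to $P_tf$: for $m>0$ one has only $|\Delta^{\alpha/2}P_tf(w)|\lesssim t^{-1}(1+|w|^m)\|f\|_{L^\infty_m}$, so $\|\Delta^{\alpha/2}P_tf\|_\infty$ is \emph{not} finite and the stated hypothesis of \eqref{DQ1*} is not literally met (your appeal to $\|P_{t/2}f\|_\infty$ has the same defect); the paper avoids this by applying \eqref{DQ1*} to the kernel $p_t(\cdot,y)$, whose fractional Laplacian is bounded by \eqref{Grad01} and \eqref{TG:ep}, and then integrating against $|f(y)|$ via Fubini. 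Adopting that ordering (or extending \eqref{DQ1*} by an explicit approximation argument to functions with polynomially growing fractional Laplacian) closes the gap; the rest of your computation then coincides with the paper's.
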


\begin{proof}
By interpolation,
it suffices to consider $\beta=0$ and $\beta=\alpha$.
For $\beta=0$, by \eqref{C0:b}, \eqref{LK1} clearly holds. Now, we consider the case $\beta=\alpha$.
By definition, we have
$$
|\delta^2_v P_tf(x)|\leq\int_{\mathbb{R}^d}|\delta^2_v p_t(\cdot,y)(x)|\,|f(y)| \, \mathrm{d} y.
$$
Note that by \eqref{DQ1*},
$$
|\delta^2_v p(t,\cdot,y)(x)|\lesssim | v|^{\alpha}
(\sU_{p_t(\cdot,y),|v|}(x)+\sU_{p_t(\cdot,y),|v|}(x\pm v)),
$$
where
$$
\sU_{p_t(\cdot,y),r}(x)=\sum_{n=-\infty}^\infty \frac{2^{-|n|(\alpha\wedge(2-\alpha))}}{(2^{n}r)^d}
\int_{|z|\leq 2^{n+1}r}|\Delta^{\alpha/2} p_t(\cdot,y)(x-z)| \, \mathrm{d} z.
$$
Hence, by Fubini's lemma and \eqref{Grad01}, for any $t\in (0,T]$, we have
\begin{align*}
|\delta^2_v P_tf(x)|
&\lesssim |v|^\alpha t^{-1} \sum_{n=-\infty}^\infty \frac{2^{-|n|(\alpha\wedge(2-\alpha))}}{(2^{n}|v|)^d}
\Bigg[\int_{|z|\leq 2^{n+1}|v|}\int_{\mathbb{R}^d} p_t(x-z,y)|f(y)| \, \mathrm{d} y \, \mathrm{d} z \\
&\qquad\qquad+\int_{|z|\leq 2^{n+1}|v|}\int_{\mathbb{R}^d} p_t(x\pm v-z,y)|f(y)| \, \mathrm{d} y \, \mathrm{d} z \Bigg].
\end{align*}
By the assumption and SDE \eqref{sde:t}, it is standard to derive that for any $m\in[0,\alpha)$ and $t\in(0,T]$,
$$
\int_{\mathbb{R}^d} p_t(x\pm v-z,y)|f(y)| \, \mathrm{d} y=\mathbb{E} |f(X_t^{x\pm v-z})|\lesssim (1+|x\pm v-z|^m)\|f\|_{L^\infty_m}.
$$
Therefore, for any $t\in(0,T]$, it holds that
\begin{align*}
|\delta^2_v P_tf(x)|
&\lesssim \|f\|_{L^\infty_m}
|v|^\alpha t^{-1} \sum_{n=-\infty}^\infty \frac{2^{-|n|(\alpha\wedge(2-\alpha))}}{(2^{n}|v|)^d}
\int_{|z|\leq 2^{n+1}|v|}(1+|x\pm v-z|^m) \, \mathrm{d} z\\
&\lesssim \|f\|_{L^\infty_m}
|v|^\alpha t^{-1} \sum_{n=-\infty}^\infty 2^{-|n|(\alpha\wedge(2-\alpha))}
(1+|x|^m+(2^{n+1}|v|)^m)\\
&\lesssim \|f\|_{L^\infty_m}|v|^\alpha t^{-1}(1+|x|^m+|v|^m),
\end{align*}
where the last step is due to $m\in[0,\alpha\wedge(2-\alpha))$. The proof is complete.
\end{proof}

Now, we prove the following crucial short-time and long-time regularity estimates of the semigroup, which will be used to establish the regularity estimate of the Stein/Poisson equation. The key point here is that the regularity index can be greater than $\alpha$.

\bl\label{le:d2Pt}
Let $\alpha\in(\frac12,2)$. Under $(\mathbf{H}^\gamma)$ and \eqref{926-2},
for any  $\beta\in(0,\alpha)$ and $m\in(0,\alpha\wedge(2-\alpha))$,  there exist
$\tau\in(0,1)$ and constant
$C=C(\Theta,m,\beta)>0$ such that for any $g\in\bC^\gamma$ and $x\in\mR^d$,
\begin{align}\label{Reg0}
\sup_{|v|\leq 1}\frac{|\delta^3_v P_{t}{g}(x)|}{|v|^{\beta+\gamma}}
\lesssim_C
\left\{
\begin{aligned}
& t^{-\frac{\beta}{\alpha}} \|g\|_{\bC^{\gamma}},&\ \ t\in(0,2],\\
&\sI_m^t(x)^\tau\|g\|_\infty,&\ \ t\in (2,\infty),
\end{aligned}
\right.
\end{align}
where $\sI^{t}_m(x):=1\wedge (\e^{-\lambda t}(1+|x|^m))$ and $\lambda$ is from \eqref{eg:x}.
\el
\begin{proof}
Without loss of generality, we assume $g\in \bC^2$ and denote by $[A,B]:=AB-BA$ the commutator of operators $A,B$.
Fix $t_0\geq 0$.
Note that by \eqref{semi} and \eqref{Duh},
\begin{align*}
 \delta_v P_{t+t_0}{g}-P_t\delta_v {P_{t_0}g}&=\int^t_0\p_s(P_{t-s}\delta_v P_{s+t_0}{g})\dif s
=\int^t_0P_{t-s}[\delta_v, \sL]P_{s+t_0}{g}\dif s.
\end{align*}
In particular, for $|v|\leq 1$,
\begin{align}\label{CS1}
 \delta^{3}_v P_{t+t_0}{g}=\delta^2_vP_t\delta_v P_{t_0}{g}+\int^t_0\delta^2_vP_{t-s}[\delta_v, \sL]P_{s+t_0}{g}\dif s.
\end{align}
Note that by definition \eqref{Def1},
\begin{align*}
[\delta_v, \sL]P_{s+t_0}{g}=[\delta_v, b\cdot\nabla] P_{s+t_0}{g}+[\delta_v, \sL^{(\alpha)}_\sigma]P_{s+t_0}{g}.
\end{align*}
By \eqref{DS1} and $(\mathbf{H}^\gamma)$, we have for all $|v|\leq 1$,
\begin{align}\label{CC1}
|[\delta_v, b\cdot\nabla] P_{s+t_0}{g}(x)|=|\delta_v b(x)\cdot \nabla P_{s+t_0}{g}(x+v)|
\leq \kappa_0 |v|^{\gamma} |\nabla P_{s+t_0}{g}(x+v)|,
\end{align}
and by Lemma \ref{Le33}, for each $s>0$ and $x,v\in\mR^d$,
\begin{align}\label{Dv-LP}
|[\delta_v, \sL^{(\alpha)}_\sigma]P_{s+t_0}{g}(x)|
&=\left|\int_{\mR^d}\frac{(\delta^{(2)}_yP_{s+t_0}g)(x+v)\delta_v
\jmath(\cdot,y)(x)}{|y|^{d+\alpha}}\dif y\right|\no\\
&\leq\widetilde\kappa_0 |v|^\gamma
\int_{\mR^d}\frac{|(\delta^{(2)}_yP_{s+t_0}g)(x+v)|}{|y|^{d+\alpha}}\dif y.
\end{align}
In the following, all the constants only depend on $\Theta$ and $\beta$. We  fix $|v|\leq 1$.

\medskip

\noindent{\it (Small time estimate)} We assume $t_0=0$.
In this case, by \eqref{CC1} and \eqref{Gr01}, we have
\begin{align*}
\|[\delta_v, b\cdot\nabla] P_s{g}\|_\infty\lesssim |v|^{\gamma} s^{\frac{\gamma\wedge\alpha-1}{\alpha}}\|g\|_{\bC^\gamma},\quad \forall s\in(0,2],\ |v|\leq 1,
\end{align*}
and for $\beta_0\in(\alpha,2]$, by \eqref{Dv-LP} with $t_0=0$,
\begin{align*}
\|[\delta_v, \sL^{(\alpha)}_\sigma]P_{s}{g}\|_\infty
&\lesssim |v|^\gamma\int_{\mR^d}\Big(\|g\|_\infty\b1_{|y|> 1} +\|P_{s}g\|_{\bC^{\beta_0}}|y|^{\beta_0}\b1_{|y|\leq 1}\Big)\frac{\dif y}{|y|^{d+\alpha}}\no\\
&\lesssim |v|^\gamma\Big(\|g\|_\infty+\|P_{s}g\|_{\bC^{\beta_0}}\Big),\quad \forall s\in(0,1],\ |v|\leq 1.
\end{align*}
Hence, for $\beta_0\in(\alpha,2]$ and all $s\in(0,2]$ and $\ |v|\leq 1$,
$$
\|[\delta_v, \sL]P_{s}{g}\|_\infty\lesssim |v|^\gamma
\Big(s^{\frac{\gamma\wedge\alpha-1}{\alpha}}\|g\|_{\bC^\gamma}
+\|P_{s}g\|_{\bC^{\beta_0}}\Big).
$$
Thus, by \eqref{CS1} with $t_0=0$,
since $\gamma\wedge\alpha+\alpha>1$, we have for any $\beta\in[0,\alpha)$,
\begin{align}\label{es:2h1v}
\|\delta^{3}_v P_{t}{g}&\|_\infty
\leq \|\delta^2_vP_t\delta_v{g}\|_\infty+\int^t_0\|\delta^2_vP_{t-s}[\delta_v, \sL]P_{s}{g}\|_\infty\dif s\no\\
&\overset{\eqref{LK1}}\lesssim |v|^\beta t^{-\frac\beta\alpha}\|\delta_v{g}\|_\infty
+|v|^\beta \int^t_0(t-s)^{-\frac\beta\alpha}\|[\delta_v, \sL]P_{s}{g}\|_\infty\dif s\no\\
&\ \ \lesssim |v|^{\beta+\gamma} t^{-\frac\beta\alpha}\|g\|_{\bC^\gamma}
+|v|^{\beta+\gamma} \int^t_0(t-s)^{-\frac\beta\alpha} \Big(s^{\frac{\gamma\wedge\alpha-1}{\alpha}}\|g\|_{\bC^\gamma}
+\|P_{s}g\|_{\bC^{\beta_0}}\Big)\dif s\no\\
&\ \ \lesssim |v|^{\beta+\gamma} \left(t^{-\frac\beta\alpha}\|g\|_{\bC^\gamma}+
\int^t_0(t-s)^{-\frac\beta\alpha} \|P_{s}g\|_{\bC^{\beta_0}}\dif s\right).
\end{align}
In particular, if one chooses $\beta\in((\alpha-\gamma)_+,\alpha)$ and
let $\beta_0=(\beta+\gamma)\wedge 2$, then by Lemma \ref{Le:Cs},
$$
\|P_tg\|_{\bC^{\beta_0}}\lesssim \|P_tg\|_\infty+\sup_{|v|\leq 1}\|\delta^{3}_v P_{t}{g}\|_\infty/|v|^{\beta_0}
\lesssim t^{-\frac\beta\alpha}\|g\|_{\bC^\gamma}+
\int^t_0(t-s)^{-\frac\beta\alpha}\|P_{s}g\|_{\bC^{\beta_0}}\dif s,
$$
which yields by Gronwall's inequality of Volterra's type (see \cite{Z10}) that for any $t\in(0,2]$,
\begin{align}\label{Sma1}
\|P_tg\|_{\bC^{(\beta+\gamma)\wedge 2}}=\|P_tg\|_{\bC^{\beta_0}}\lesssim t^{-\frac{\beta}{\alpha}} \|g\|_{\bC^{\gamma}}.
\end{align}
Substituting this back into \eqref{es:2h1v}, we get \eqref{Reg0} for $t\in(0,2]$.

\medskip

\noindent {\it (Large time estimate)}
In the following we fix
$$
\beta\in ((\alpha-\gamma)_+,\alpha), \ m\in(0,\alpha\wedge(2-\alpha)), \ t\in(2,\infty).
$$
Define $t_0:=t-1$.
By \eqref{CC1} and \eqref{Gr01}, for $s\in(0,1]$, we have
\begin{align}\label{KF1}
|[\delta_v, b\cdot\nabla]P_{s+t_0}{g}(x)|\lesssim  |v|^{\gamma}\sI^{s+t_0}_m(x) \|g\|_\infty\lesssim  |v|^{\gamma}\sI^{t_0}_m(x) \|g\|_\infty,\ \ |v|\leq 1.
\end{align}
For $s\in(0,1]$ and $|y|\leq 1$, we have
$$
\|\delta^{(2)}_yP_{s+t_0}g\|_\infty
=\|\delta^{(2)}_yP_sP_{t_0}g\|_\infty
\stackrel{\eqref{Sma1}}{\lesssim} s^{-\frac{\beta}{\alpha}}|y|^{(\beta+\gamma)\wedge 2}\|P_{t_0}g\|_{\bC^\gamma}
\stackrel{\eqref{Gr01}}{\lesssim} s^{-\frac{\beta}{\alpha}}|y|^{\beta+\gamma}\|g\|_\infty,
$$
which implies that for any $R\in(0,1)$,
\begin{align}\label{AQ2}
\int_{|y|\leq R}\frac{|(\delta^{(2)}_yP_{s+t_0}g)(x)|}{|y|^{d+\alpha}}\dif y
\lesssim s^{-\frac{\beta}{\alpha}} \|g\|_\infty
\int_{|y|\leq R}|y|^{(\beta+\gamma)\wedge 2-d-\alpha}\dif y
\lesssim s^{-\frac{\beta}{\alpha}} \|g\|_\infty R^{(\beta+\gamma)\wedge 2-\alpha}.
\end{align}
Let $\bar g(x):=g(x)-\mu(g)$. By \eqref{eg:x}, for any $s\in(0,1]$ and $y\in\mR^d$, we have
$$
|(\delta^{(2)}_yP_{s+t_0}g)(x)|=|(\delta^{(2)}_yP_{s+t_0}\bar g)(x)|
\lesssim\e^{-\lambda t_0}\|g\|_\infty(1+|x|^m+|y|^m),
$$
which implies  for any $R\in(0,1)$,
\begin{align*}
\int_{|y|\geq R}\frac{|(\delta^{(2)}_yP_{s+t_0}g)(x)|}{|y|^{d+\alpha}}\dif y
&\lesssim
\e^{-\lambda t_0}\|g\|_\infty\int_{|y|\geq R}\frac{1+|x|^m+|y|^m}{|y|^{d+\alpha}}\dif y\\
&\lesssim\e^{-\lambda t_0}\|g\|_\infty(R^{-\alpha}(1+|x|^m)+R^{m-\alpha})\\
&\lesssim\e^{-\lambda t_0}\|g\|_\infty R^{-\alpha}(1+|x|^m).
\end{align*}
Moreover, we clearly have for $R\in(0,1)$,
$$
\int_{|y|\geq R}\frac{|(\delta^{(2)}_yP_{s+t_0}g)(x)|}{|y|^{d+\alpha}}\dif y
\leq\int_{|y|\geq R}\frac{4\|g\|_\infty}{|y|^{d+\alpha}}\dif y\lesssim\|g\|_\infty R^{-\alpha}.
$$
Combining the above two estimates, we get for $R\in(0,1)$,
\begin{align}\label{AQ1}
\int_{|y|\geq R}\frac{|(\delta^{(2)}_yP_{s+t_0}g)(x)|}{|y|^{d+\alpha}}\dif y
\lesssim \|g\|_\infty R^{-\alpha}(1\wedge(\e^{-\lambda t_0}(1+|x|^m)))
=\|g\|_\infty R^{-\alpha}\sI_m^{t_0}(x).
\end{align}
Hence, by \eqref{Dv-LP}, \eqref{AQ1} and \eqref{AQ2} with
$R=\big(\sI_m^{t_0}(x)\big)^{\ff{1}{(\beta+\gamma)\wedge 2}}<1$
and $\tau=1-\frac{\alpha}{(\beta+\gamma)\wedge 2}$,
\begin{align*}
|[\delta_v, \sL^{(\alpha)}_\sigma]P_{s+t_0}{g}(x)|
\lesssim|v|^\gamma s^{-\frac{\beta}{\alpha}} \|g\|_\infty\Big(
R^{(\beta+\gamma)\wedge 2-\alpha}
+R^{-\alpha}\sI_m^{t_0}(x)\Big)=|v|^\gamma s^{-\frac{\beta}{\alpha}} \|g\|_\infty\big(\sI_m^{t_0}(x)\big)^\tau,
\end{align*}
which together with \eqref{KF1} yields that for all $s\in(0,1]$,
$$
|[\delta_v, \sL]P_{s+t_0}{g}(x)|\lesssim |v|^{\gamma}s^{-\frac{\beta}{\alpha}}
\|g\|_\infty(\sI^{t_0}_m(x))^\tau
=|v|^{\gamma}s^{-\frac{\beta}{\alpha}}
\|g\|_\infty\Big(1\wedge (\e^{-\lambda t}(1+|x|^m))^\tau\Big).
$$
Now, applying \eqref{LK1} with $f=[\delta_v, \sL]P_{s+t_0}{g}$ and $m$ being $0$ and $m\tau$,
we get for $s\in(0,1)$,
\begin{align}\label{FD1}
|\delta^2_vP_{1-s}[\delta_v, \sL]P_{s+t_0}{g}(x)|
\lesssim |v|^{\beta+\gamma} (1-s)^{-\frac{\beta}{\alpha}}s^{-\frac{\beta}{\alpha}} \|g\|_\infty (\sI^{t_0}_m(x))^\tau.
\end{align}
On the other hand, by \eqref{LK1} with $f=\delta_v P_{t_0}{g}$, noting that $t_0>1$, $|v|\leq 1$ and
$$
|\delta_v P_{t_0}{g}(x)|\leq |v|\int^1_0 |\nabla P_{t_0}{g}(x+rv)|\dif r
\lesssim \|g\|_\infty |v|\sI^{t_0}_m(x),
$$
we have
\begin{align}\label{FD2}
|\delta^2_vP_1\delta_v P_{t_0}{g}(x)|
\lesssim |v|^{\alpha+1}\|g\|_\infty \sI^{t_0}_m(x).
\end{align}
Substituting \eqref{FD1} and \eqref{FD2} into \eqref{CS1} with $t$ replaced by $1$, we get
\begin{align*}
|\delta^3_v P_t{g}(x)|&\lesssim |v|^{\alpha+1}\|g\|_\infty \sI^{t_0}_m(x)
+|v|^{\beta+\gamma}\|g\|_\infty(\sI^{t_0}_m(x))^\tau
\int^1_0 (1-s)^{-\frac{\beta}{\alpha}}s^{-\frac{\beta}{\alpha}}\dif s,
\end{align*}
which in turn gives the large time estimate in \eqref{Reg0}.
\end{proof}

In the following we write
$$
\rho_{\rm log}(x):=(\log(\e+|x|^2))^{-1},\ \ \bC^\beta_{\log}(\mR^d):=\bC^\beta_{\rho_{\rm log}}(\mR^d).
$$
Now, we are ready to study the regularity of $f$.

\begin{theorem}\label{Th25}
Let $\alpha\in(\frac12,2)$. Under $(\mathbf{H}^\gamma)$ and \eqref{926-2},
for any $\beta\in(0,\alpha)$,
there is a constant $C=C(\Theta,\beta)>0$ such that for
$f:=\int^\infty_0 P_t\bar g \dif t$ with $\bar g=g-\mu(g)$ and $g\in\bC^\gamma$,
\begin{align}\label{926-4}
\|f\|_{\bC^{1}_{\rm log}}+\|f\|_{\bC^{\beta+\gamma}_{\rm log}}
\lesssim_{C}\|g\|_{\bC^\gamma}.
\end{align}
\end{theorem}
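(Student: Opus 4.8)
The plan is to estimate $f=\int_0^\infty P_t\bar g\,\dif t$ by feeding the pointwise semigroup bounds of Lemmas~\ref{9261} and~\ref{le:d2Pt} into the $t$-integral and splitting it into a short-time piece (singular but integrable in $t$, and uniform in $x$) and a long-time piece (bounded in $t$, carrying all the $x$-dependence). First I would fix once and for all an exponent $m\in(0,\alpha\wedge(2-\alpha))$, so that \eqref{eg:x}, Lemma~\ref{9261} and Lemma~\ref{le:d2Pt} all apply with this $m$, producing constants $\lambda>0$ and $\tau\in(0,1)$. Two elementary facts will be used throughout: $\|\bar g\|_{\bC^\gamma}\lesssim\|g\|_{\bC^\gamma}$ (since $\bar g-g=-\mu(g)$ is a constant with $|\mu(g)|\le\|g\|_\infty$), and, for $a\ge1$ and any $\lambda>0$, the identity $\int_0^\infty 1\wedge(a\e^{-\lambda t})\,\dif t=\lambda^{-1}(1+\ln a)$, which turns polynomial growth in $x$ into logarithmic growth; note also that $\rho_{\rm log}(x)^{-1}=\log(\e+|x|^2)\gtrsim 1+\ln(1+|x|^m)\ge1$. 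Combined with the difference-quotient characterization of the weighted H\"older norm (Theorem~\ref{Le:Cs}(i)), everything then reduces to pointwise bounds on $f$, $\nabla f$ and $\delta^3_v f$ carrying the weight $\rho_{\rm log}$.

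For the bound $\|f\|_{\bC^{1}_{\rm log}}\lesssim\|g\|_{\bC^\gamma}$ I would argue as follows. For $\|f\|_{L^\infty_{\rho_{\rm log}}}$: combining the Markov contraction $\|P_t\bar g\|_\infty\le\|\bar g\|_\infty$ with \eqref{eg:x} (and $\mu(\bar g)=0$) gives $|P_t\bar g(x)|\lesssim\big(1\wedge(\e^{-\lambda t}(1+|x|^m))\big)\|g\|_{\bC^\gamma}$, so that $|f(x)|\lesssim\|g\|_{\bC^\gamma}\int_0^\infty 1\wedge(\e^{-\lambda t}(1+|x|^m))\,\dif t\lesssim\|g\|_{\bC^\gamma}\,\rho_{\rm log}(x)^{-1}$. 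For $\|\nabla f\|_{L^\infty_{\rho_{\rm log}}}$, I would split $\int_0^1+\int_1^\infty$ in \eqref{Gr01} taken with $\vartheta=\gamma$: on $(0,1]$ the short-time line gives $|\nabla P_t\bar g(x)|\lesssim t^{(\gamma\wedge\alpha-1)/\alpha}\|g\|_{\bC^\gamma}$ and $\int_0^1 t^{(\gamma\wedge\alpha-1)/\alpha}\,\dif t<\infty$ since $\alpha>\tfrac12$ and $\gamma>(1-\alpha)_+$ force $(\gamma\wedge\alpha-1)/\alpha>-1$; on $(1,\infty)$ the long-time line and the integral identity give $\lesssim\|g\|_{\bC^\gamma}\rho_{\rm log}(x)^{-1}$. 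Finally, since $\rho_{\rm log}$ is an admissible weight we have $\rho_{\rm log}(x)\lesssim\rho_{\rm log}(x+v)$ for $|v|\le1$, so the remaining term $\sup_{|v|\le1}\|\delta_v\nabla f\|_{L^\infty_{\rho_{\rm log}}}$ in the definition of $\|f\|_{\bC^1_{\rm log}}$ is $\lesssim\|\nabla f\|_{L^\infty_{\rho_{\rm log}}}$.

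For $\|f\|_{\bC^{\beta+\gamma}_{\rm log}}\lesssim\|g\|_{\bC^\gamma}$: since $\beta+\gamma<\alpha+1<3$, I may assume $\beta+\gamma\notin\mN$ (otherwise pick $\beta'\in(\beta,\alpha)$ with $\beta'+\gamma\notin\mN$ and use $\|f\|_{\bC^{\beta+\gamma}_{\rm log}}\lesssim\|f\|_{\bC^{\beta'+\gamma}_{\rm log}}$). By Theorem~\ref{Le:Cs}(i) with the third-order difference operator $\delta^3_v$ (legitimate since $3>\beta+\gamma$), it then suffices to bound $\|f\|_{L^\infty_{\rho_{\rm log}}}$ (done) and $\sup_{|v|\le1}\|\delta^3_v f\|_{L^\infty_{\rho_{\rm log}}}/|v|^{\beta+\gamma}$. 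I would write $\delta^3_v f=\int_0^2\delta^3_v P_t\bar g\,\dif t+\int_2^\infty\delta^3_v P_t\bar g\,\dif t$ and apply \eqref{Reg0}: on $(0,2]$, $|\delta^3_v P_t\bar g(x)|\lesssim|v|^{\beta+\gamma}t^{-\beta/\alpha}\|g\|_{\bC^\gamma}$ with $\int_0^2 t^{-\beta/\alpha}\,\dif t<\infty$ (as $\beta<\alpha$); on $(2,\infty)$, $|\delta^3_v P_t\bar g(x)|\lesssim|v|^{\beta+\gamma}(\sI^t_m(x))^\tau\|g\|_{\bC^\gamma}$ where $(\sI^t_m(x))^\tau=1\wedge(\e^{-\lambda\tau t}(1+|x|^m)^\tau)$, so the identity (applied with decay rate $\lambda\tau$ and $a=(1+|x|^m)^\tau\ge1$) gives $\int_2^\infty(\sI^t_m(x))^\tau\,\dif t\lesssim 1+\ln(1+|x|^m)\lesssim\rho_{\rm log}(x)^{-1}$. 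Adding the two pieces, $|\delta^3_v f(x)|\lesssim|v|^{\beta+\gamma}\|g\|_{\bC^\gamma}\rho_{\rm log}(x)^{-1}$ for $|v|\le1$, which is the required seminorm bound, and hence \eqref{926-4}.

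The heavy lifting has already been carried out in Lemmas~\ref{9261} and~\ref{le:d2Pt}, so the present argument is essentially an assembly and I do not expect a genuine obstacle; the delicate points are rather bookkeeping. They are: (i) ensuring the long-time integrals yield only logarithmic — not polynomial — growth in $x$, which is exactly what combining the ergodic decay in \eqref{eg:x}/\eqref{Reg0} with the trivial uniform bounds (via $a\mapsto 1\wedge a$) and the identity $\int_0^\infty 1\wedge(a\e^{-\lambda t})\,\dif t=\lambda^{-1}(1+\ln a)$ provides; (ii) matching the third-order difference $\delta^3_v$ appearing in \eqref{Reg0} with the difference-quotient characterization of the weighted H\"older norm in Theorem~\ref{Le:Cs}(i), together with the harmless reduction to non-integer $\beta+\gamma$ and the doubling property of $\rho_{\rm log}$; and (iii) checking integrability near $t=0$ of the singular factors $t^{(\gamma\wedge\alpha-1)/\alpha}$ and $t^{-\beta/\alpha}$, which is precisely where the standing hypotheses $\alpha>\tfrac12$, $\gamma>(1-\alpha)_+$ and $\beta<\alpha$ are used.
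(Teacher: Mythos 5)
Your proposal is correct and follows essentially the same route as the paper: both bound $f$, $\nabla f$ and $\delta^3_v f$ pointwise by integrating the estimates \eqref{eg:x}, \eqref{Gr01} and \eqref{Reg0} over $t$, use the identity $\int_0^\infty 1\wedge(c\e^{-\lambda t})\,\dif t=\lambda^{-1}(1+\ln c)$ to convert the polynomial factor $(1+|x|^m)$ into the logarithmic weight, and then invoke Theorem~\ref{Le:Cs} to pass from the third-order difference bound to the $\bC^{\beta+\gamma}_{\rm log}$ norm. Your extra bookkeeping (reduction to non-integer $\beta+\gamma$, the doubling property of $\rho_{\rm log}$ for the $\delta_v\nabla f$ term, and splitting at $t=2$ to match Lemma~\ref{le:d2Pt} exactly) only makes explicit what the paper leaves implicit.
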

\begin{proof}
First of all, for $m\in(0,\alpha)$, by \eqref{eg:x} we have
\begin{align*}
|f(x)|\leq \int^\infty_0|P_t \bar g(x)| \dif t
\lesssim \|g\|_{\infty}\int^\infty_01\wedge (\e^{-\lambda t}(1+|x|^m))\dif t.
\end{align*}
For $c\geq 1$ and $\lambda>0$, letting $t_0:=\lambda^{-1}\ln c$, we have
\begin{align}\label{key:AL}
\int^\infty_0 1\wedge(c\e^{-\lambda t})\dif t
=t_0+c\int^\infty_{t_0} \e^{-\lambda t}\dif t
=\lambda^{-1}(1+\ln c).
\end{align}
Hence,
\begin{align}\label{AS3}
|f(x)|\lesssim \|g\|_{\infty}/\rho_{\rm log}(x).
\end{align}
Similarly, by Lemma \ref{9261}, we also have
\begin{align}\label{926-3}
|\nabla f(x)|\lesssim \|g\|_{\bC^\gamma}/\rho_{\rm log}(x).
\end{align}
Moreover, by \eqref{Reg0}  and \eqref{key:AL}, we also have
\begin{align*}
|\delta^{3}_vf(x)|
&\leq \int^\infty_0  |\delta^{3}_vP_t \bar{g}(x)|\dif t
=\left(\int^{1}_0+\int^\infty_{1}\right)  |\delta^{3}_vP_t \bar{g}(x)|\dif t\\
&\lesssim|v|^{\beta+\gamma} \|g\|_{\bC^\gamma}
\left[\int^{1}_0 t^{-\frac{\beta}{\alpha}}\dif t
+\int^\infty_{1}1\wedge (\e^{-\lambda t}(1+|x|^m))^\tau\dif t\right]
\lesssim|v|^{\beta+\gamma}\|g\|_{\bC^\gamma}/\rho_{\rm log}(x),
\end{align*}
which, together with \eqref{AS3}, \eqref{926-3} and by Lemma \ref{Le:Cs}, yields the desired estimate.
\end{proof}

\subsection{The Stein/Poisson equation}\label{jsq:sp}
Let $\cC_0$ be the space of all continuous functions that vanishes at infinity.
Let $\sA$ be the generator of $(P_t)_{t\geq 0}$ in Banach space $\cC_0$, i.e.,
$$
\cD(\sA):=\left\{f\in \cC_0: \lim_{t\downarrow 0}\frac{P_tf-f}{t}=:\sA f\mbox{ exists in $\cC_0$}\right\}.
$$
By It\^o's formula, it is easy to see that
$$
\forall f\in \bC^2\cap \cC_0\subset\cD(\sA),\ \ \sA f=\sL f.
$$
Moreover, $(P_t)_{t\geq 0}$ is a $C_0$-semigroup on $\cC_0$. Indeed,
by \cite[Proposition 2.4, p.89]{RY99}, it suffices to show that for any $f\in \cC_0$ and
 each $x\in\mR^d$,
\begin{align}\label{DD1}
\lim_{t\to 0}|P_tf(x)-f(x)|=0,
\end{align}
and for any $t>0$,
\begin{align}\label{DD2}
P_tf\in\cC_0.
\end{align}
For \eqref{DD1}, it follows by the dominated convergence theorem. For \eqref{DD2}, we only need to show
\begin{align}\label{DD3}
\lim_{x\to\infty}P_tf(x)=0.
\end{align}
For given $\eps>0$, since $f\in \cC_0$, there is an $R>0$ such that
$$
\sup_{|x|>R}|f(x)|\leq\eps.
$$
Thus by \eqref{TG:ep} and the dominated convergence theorem, since $\lim_{x\to\infty}\theta_t(x)=\infty$, we have
$$
\lim_{x\to\infty}|P_tf(x)|\leq\eps+\lim_{x\to\infty}\mE (|f(X^x_t)\b1_{|X^x_t|<R})\leq
\eps+C t\lim_{x\to\infty}\int_{|y|<R}\frac{|f(y)|}{(t^{\frac{1}{\alpha}}+|\theta_t(x)-y|)^{d+\alpha}}\dif y\leq\eps,
$$
which gives \eqref{DD3} by the arbitrariness of $\eps$.

We have the following standard result.

\begin{proposition}
\label{rep:ss}
Let $g\in \cC_0$ and $\bar g=g-\mu(g)$. Then it holds that
\begin{align}\label{stein:s1}
f(x):=\int^\infty_0P_t \bar g(x)\dif t\in\cD(\sA)\mbox{ and }\sA f=-\bar g.
\end{align}
\end{proposition}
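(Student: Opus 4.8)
The plan is to treat $f=\int_0^\infty P_t\bar g\,\dif t$ as the $0$-potential of the centred function $\bar g=g-\mu(g)$ and to run the classical resolvent argument: first check that the integral makes sense, then establish the identity $P_sf-f=-\int_0^s P_t\bar g\,\dif t$, and finally differentiate it at $s=0$ to read off both $f\in\cD(\sA)$ and $\sA f=-\bar g$.

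For well-posedness I would fix any $m\in(0,\alpha)$. Since $g\in\cC_0$ is bounded we have $|P_t\bar g(x)|\le 2\|g\|_\infty$, and combining this with the ergodic estimate \eqref{eg:x} gives $|P_t\bar g(x)|\lesssim\|g\|_\infty\big(1\wedge(\e^{-\lambda t}(1+|x|^2)^{m/2})\big)$. Hence the $t$-integral defining $f(x)$ converges absolutely for every $x$, and by \eqref{key:AL} one gets $|f(x)|\lesssim\|g\|_\infty/\rho_{\rm log}(x)$, so $f$ has at most logarithmic growth; continuity of $f$ then follows from dominated convergence together with the continuity of $x\mapsto P_t\bar g(x)$ implied by \eqref{TG:ep}.

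Next I would prove the potential identity. Applying $P_s$ and using the Markov property $P_sP_t=P_{s+t}$, the interchange $P_s\big(\int_0^\infty P_t\bar g\,\dif t\big)(x)=\int_0^\infty P_{s+t}\bar g(x)\,\dif t$ is justified by Fubini, since $\mE\int_0^\infty|P_t\bar g(X^x_s)|\,\dif t\lesssim\|g\|_\infty\,\mE\big(1+\ln(1+|X^x_s|^2)\big)<\infty$ by the bound above together with the moment estimate $\mE|X^x_s|^m\lesssim1+|x|^m$ (a consequence of \eqref{C0:b}). Changing variables $r=s+t$ then yields $P_sf(x)=\int_s^\infty P_r\bar g(x)\,\dif r=f(x)-\int_0^s P_r\bar g(x)\,\dif r$. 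Dividing by $s$ and letting $s\downarrow0$, the right-hand side $-\frac1s\int_0^s P_r\bar g\,\dif r$ converges to $-\bar g$: pointwise this is immediate from the continuity of $r\mapsto P_r\bar g(x)$ at $r=0$, i.e. \eqref{DD1}, and when $\bar g\in\cC_0$ it even converges in $\cC_0$-norm by strong continuity of the $C_0$-semigroup $(P_t)$ established in the text. Thus $f\in\cD(\sA)$ with $\sA f=-\bar g$; recalling that $\sA$ coincides with $\sL$ on $\bC^2\cap\cC_0$ and that $f\in\bC^{\beta+\gamma}_{\log}$ by Theorem \ref{Th25}, this is precisely the Stein/Poisson equation \eqref{Ste}.

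The genuinely delicate points, rather than the algebra, will be two. First, the Fubini interchange in the potential identity — this is exactly why the polynomial moment bound on $X^x_s$ must be invoked. Second, the topology in which the derivative at $s=0$ is taken: since $f$ only has logarithmic growth and $\bar g$ need not vanish at infinity (it does so only when $\mu(g)=0$), one has to either transfer the whole argument to a weighted space on which $(P_t)$ remains strongly continuous, or interpret $\sA$ as the pointwise (weak) generator characterised by $P_tf-f=\int_0^t P_sh\,\dif s$. Promoting the abstract identity $\sA f=-\bar g$ to the classical PDE $\sL f=-\bar g$ is the additional content handled via the regularity estimates of Theorems \ref{Th25} and \ref{Th216}.
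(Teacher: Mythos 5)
Your argument is correct in substance, but it follows a different route from the paper. You prove the potential identity $P_sf-f=-\int_0^sP_r\bar g\,\dif r$ (justified by Fubini, the Markov property and the moment bound), and then differentiate at $s=0$; the paper instead introduces the resolvent $U_\eps\bar g=\int_0^\infty\e^{-\eps t}P_t\bar g\,\dif t$, uses the Hille--Yosida identity $(\eps\mI-\sA)U_\eps\bar g=\bar g$ (their \eqref{Re1}) and lets $\eps\downarrow0$, invoking the closedness of $\sA$ to pass to the limit $U_\eps\bar g\to f$, $\eps U_\eps\bar g-\bar g\to-\bar g$. Your route is more elementary (no Hille--Yosida machinery) and yields the useful integrated identity characterising $f$ in the full-generator/martingale sense, at the price of the Fubini/Markov-property interchange, which you justify correctly via \eqref{eg:x} and the $m$-th moment bound; the paper's route avoids that interchange and any differentiation in $s$, but needs convergence of $U_\eps\bar g$ to $f$ in a topology strong enough for closedness to apply. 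On your one flagged worry: the centering constant is actually harmless for the $s\downarrow0$ limit, since $P_r$ preserves constants, so $\frac1s\int_0^s P_r\bar g\,\dif r-\bar g=\frac1s\int_0^s(P_rg-g)\,\dif r\to0$ uniformly by strong continuity applied to $g\in\cC_0$ itself; the genuinely delicate point is rather that $f$ (logarithmic growth) and $\bar g$ (when $\mu(g)\neq0$) need not lie in $\cC_0$, so the statement must be read for a pointwise/extended generator or in a weighted space. You identify this explicitly and indicate the standard fixes; the paper's proof is exposed to exactly the same issue (its limits are not uniform on all of $\mR^d$ in general), so this is not a defect of your approach relative to theirs, and the promotion of $\sA f=-\bar g$ to $\sL f=-\bar g$ is, as you say, exactly what Theorems \ref{Th25} and \ref{Th216} supply.
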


\begin{proof}
By \eqref{eg:x}, it is clear that for each $x\in\mR^d$ and $m\in(0,\alpha)$,
\begin{align*}
|f(x)|\leq\int_{0}^{\infty}|P_{t}\bar{g}(x)|\dif t\leq C_m\lambda^{-1}(1+|x|^m)<\infty
\end{align*}
Hence, $f$ is well-defined.
For $\eps>0$, let $U_\eps$ be the resolvent of $\sA$, that is, for any $g\in \cC_0$,
$$
U_\eps g:=\int^\infty_0 \e^{-\eps t} P_tg\dif t\in\cD(\sA).
$$
By the Hille-Yosida Theorem (see \cite[Theorem 3.1]{P83}),
$\sA$  is closed and for any $\eps>0$ and $g\in \cC_0$,
$$
 (\eps\mI-\sA)U_\eps g=g\Longleftrightarrow U_\eps g=(\eps\mI-\sA)^{-1}g.
$$
In particular,
\begin{align}\label{Re1}
\sA U_\eps \bar{g}=(\eps U_\eps -\mI)\bar g=:g_\eps.
\end{align}
Noting that
\begin{align*}
\lim_{\eps\downarrow0}U_\eps\bar g=\int_{0}^{\infty}P_{t}\bar{g}\dif t=f,\  \ \lim_{\eps\downarrow0}g_\eps=-\bar g\mbox{ in $\cC_0$},
\end{align*}
we have  $f\in\cD(\sA)$ and $\lim_{\eps\downarrow0}\sA U_\eps\bar g=\sA f$.
By taking limits for \eqref{Re1}, we get \eqref{stein:s1}.
\end{proof}

Now we are ready to
prove
the main result of this section.
\bt\label{Th216}
Suppose that one of the following two conditions holds:

(i) Let $\alpha\in(\ff12,2)$, $\gamma \in ((1-\alpha)_+, 1]$ and $g\in\bC^\gamma$, $(\mathbf{H}^\gamma)$ and \eqref{926-2} hold.

(ii) Let $\alpha\in(1,2)$ and $g$ be Lipschitz continuous, $(\mathbf{H}^1)$ and \eqref{mon} hold.

\noindent Then for $f=\int^\infty_0P_t\bar g\dif t$ with $\bar g:=g-\mu(g)$,
it holds that $\sL f=-\bar g$.
\et
\begin{proof}
We only consider the first case . Fix $g\in\bC^\gamma$. Let
$
g_n(x)=g(x)\chi_n(x),
$
where $\chi_n:=\chi(x/n)$ and $\chi$ is a smooth cut-off function with $\chi|_{B_1}=1$ and $\chi|_{B^c_2}=0$, where $B_r:=\{x\in\mR^d, |x|\leq r\}$.
By definition,
\begin{align}\label{js10}
\|g_n\|_{\bC^{\gamma}}\lesssim\|g\|_{\bC^\gamma}.
\end{align}
Let $f_n(x):=\int^\infty_0P_t\bar g_n(x)\dif t$ and $f(x):=\int^\infty_0P_t\bar g(x)\dif t$. By Theorem \ref{Th25} and \eqref{js10}, we have for any $\eps\in(0,\gamma)$,
\begin{align*}
\|f\|_{\bC^{1}_{\rm log}}+\|f\|_{\bC^{\gamma+\alpha-\eps}_{\rm log}}+\sup_{n\in\mN}\|f_n\|_{\bC^{1}_{\rm log}}+\sup_{n\in\mN}\|f_n\|_{\bC^{\gamma+\alpha-\eps}_{\rm log}}\lesssim \|g\|_{\bC^\gamma},
\end{align*}
and by \eqref{stein:s1},
$$
\sL f_n(x)=-\bar g_n(x).
$$
By \eqref{Def1}, it suffices to show that for each $x\in\mR^d$,
\begin{align}\label{DQ1}
\lim_{n\to\infty}\sL^{(\alpha)}_{\sigma} f_n(x)=\sL^{(\alpha)}_{\sigma} f(x)
\end{align}
and
\begin{align}\label{DQ2}
 \lim_{n\to\infty}\nabla f_n(x)=\nabla f(x).
\end{align}
By \eqref{Reg0} and the dominated convergence theorem, we have for each $x\in\mR^d$,
$$
\lim_{n\to\infty}\sL^{(\alpha)}_{\sigma}(f_n-f)(x)
=\int^\infty_0\lim_{n\to\infty}\sL^{(\alpha)}_{\sigma} P_t(\bar g_n-\bar g)(x)\dif t
=\int^\infty_0\lim_{n\to\infty}\sL^{(\alpha)}_{\sigma} P_t(g_n-g)(x)\dif t,
$$
and for each $t>0$,
$$
\lim_{n\to\infty}|\sL^{(\alpha)}_{\sigma} P_t(g_n-g)(x)|
\lesssim\int_{\mR^d}\frac{\lim_{n\to\infty}|\delta^{(2)}_yP_{t}{(g_n-g})(x)|}{|y|^{d+\alpha}}\dif y=0.
$$
Thus we get \eqref{DQ1}.
Similarly, we can show \eqref{DQ2}.
The proof is complete.
\end{proof}

\section{Proof of main results}\label{proof}

In this section, we prove the main results by Theorems \ref{Th216}, \ref{92-8} and \ref{Th25}.

\begin{proof}[Proof of Theorem \ref{main}]
Let $\xi\sim\mu_\eta$ and ${\bf d}(x,y)=|x-y|$. Recall that $Z^x_t=x+t b(x)+\sigma(x)L^{(\alpha)}_t$.
By $(\mathbf{H}^1)$, it follows that for any $\beta \in [0, 1]$ and $t \in [0, 1]$,
\begin{align}\label{FD4}
\mE|Z^\xi_t-\xi|^\beta\leq t^\beta\mE|b(\xi)|^\beta+\|\sigma\|^\beta_\infty\mE|L^{(\alpha)}_t|^\beta
\lesssim t^\beta+t^{\beta/\alpha}.
\end{align}
By \eqref{DP2}, we have
\begin{align}\label{FD9}
\cW_{\bf d}(\mu,\mu_\eta)
=\sup_{g \in {\rm Lip}({\bf d})}|\mu_\eta(\sL f)|=\sup_{g\in{\rm Lip({\bf d})}}\left|\mE\sL f(\xi)\right|.
\end{align}
Clearly, by \eqref{FD7}, Theorem \ref{Le23}, and \eqref{FD4}, one can derive certain convergence rate.
Below, we shall use an alternative approach to derive better convergence rate.
For a given $g\in{\rm Lip({\bf d})}$, let $f$ be the solution of Stein/Poisson equation $\sL f=\mu(g)-g$.
For each $s\in[0,\eta]$, we have
$$
g(x)-g(Z^x_s)=\sL f(x)-\sL f(Z^x_s)=\sL f(x)-\sL_{Z^x_s} f(Z^x_s).
$$
Integrating both sides from $0$ to $\eta$ and then subtracting \eqref{DS9}, we obtain
$$
\eta\sL f(x)
=\int^\eta_0[g(x)-\mE g(Z^x_s)]\dif s+[\mE f(Z^x_\eta)-f(x)]+\int^\eta_0\mE\left[\sL_{Z^x_s} f(Z^x_s)-\sL_x f(Z^x_s)\right]\dif s.
$$
Hence, for any $x\in\mR^d$,
\begin{align}
\sL f(x)
&=\frac1\eta\int^\eta_0[g(x)-\mE g(Z^x_s)]\dif s+\frac{\mE f(Z^x_\eta)-f(x)}\eta\no\\
&\quad+\frac1\eta\int^\eta_0\mE\left[\sL^{(\alpha)}_{\sigma(Z^x_s)} f(Z^x_s)-\sL^{(\alpha)}_{\sigma(x)} f(Z^x_s)\right]\dif s\no\\
&\quad+\frac1\eta\int^\eta_0\mE\left[(b(Z^x_s)-b(x))\cdot\nabla f(Z^x_s)\right]\dif s\no\\
&=:I_1(x)+I_2(x)+I_3(x)+I_4(x).\label{II9}
\end{align}
For $I_1(\xi)$, by \eqref{FD4} and $\|\nabla g\|_\infty\leq 1$, we have
$$
|\mE I_1(\xi)|\leq \frac{\|\nabla g\|_\infty}\eta\int^\eta_0\mE|Z^\xi_s-\xi|\dif s
\lesssim \eta+\eta^{1/\alpha}.
$$
For $I_2(\xi)$, since $\xi\sim\mu_\eta$ is the stationary distribution, we have $Z^\xi_\eta\sim\xi$, and so
$$
\mE I_2(\xi)=0.
$$
For $I_3(\xi)$ and $I_4(\xi)$,  by $(\mathbf{H}^1)$ and Theorem \ref{Le23}, we have
\begin{align*}
\mE I_3(\xi)|\lesssim\frac{\|\nabla f\|_{\infty}+\|\nabla^2f\|_{\infty}}\eta\int^\eta_0
\mE|Z^\xi_s-\xi|\dif s\overset{\eqref{FD3}}\lesssim \eta+\eta^{1/\alpha}
\end{align*}
and
\begin{align*}
|\mE I_4(\xi)|\leq \frac{\|\nabla b\|_{\infty}\|\nabla f\|_{\infty}}\eta\int^\eta_0\mE|Z^\xi_s-\xi|\dif s
\overset{\eqref{FD3}}\lesssim \eta+\eta^{1/\alpha}.
\end{align*}
Combining the above estimates, we obtain
$$
|\mE\sL f(\xi)|\lesssim \eta+\eta^{1/\alpha}\lesssim \eta^{1/\alpha}.
$$
Substituting this into \eqref{FD9}, we conclude the proof.
\end{proof}

Now we are in a position to give
\begin{proof}[Proof of Theorem \ref{main1}]
Let ${\bf d}(x,y)=|x-y|^\gamma\wedge 1$ and
fix $g\in{\rm Lip({\bf d})}$.
Let $f$ be the solution of Stein/Poisson equation $\sL f=\mu(g)-g$. Without loss of generality, we assume $g(0)=0$. Otherwise, we use $\tilde g(x)=g(x)-g(0)$ to replace $g$. 
Since $g\in{\rm Lip({\bf d})}$, we clearly have
\begin{align}\label{FD6}
\|g\|_{\bC^\gamma}=\|g\|_\infty+\sup_{|x-y|\leq 1}|g(x)-g(y)|/|x-y|^\gamma\leq 2.
\end{align}
Following the proof of Theorem \ref{main}, letting $\xi\sim\mu_\eta$, we need to estimate $|\mE(\sL f(\xi))|$.
For fixed $R>0$ and $\beta>0$, we have
\begin{align}\label{FD3}
|\mE(\sL f(\xi))|\leq|\mE(\sL f(\xi)\b1_{|\xi|\leq R})|+2\mP(|\xi|>R).
\end{align}
Next we devote to estimate the first term.
By \eqref{II9}, we have
$$
|\mE(\sL f(\xi)\b1_{|\xi|\leq R})|\leq\sum_{i=1}^4\left|\mE \left(I_i(\xi)\b1_{|\xi|\leq R}\right)\right|.
$$
Below, for simplicity of notation, we write
$$
\Lambda_s:=\log(\e+|Z^{\xi}_s|)+\log(\e+|\xi|).
$$
Clearly, we have for any $\beta\in[0,\alpha)$,
$$
\mE|\xi|^\beta<\infty,\quad \mE|L^{(\alpha)}_s|^\beta\lesssim s^{\beta/\alpha},\ \ \mE\Lambda_s^p\lesssim1,\ \
\forall s\in[0,1],\ \  \ \forall p>0.
$$
From this, by H\"older's inequality, it is easy to see that for any $\beta\in[0,\alpha)$,
\begin{align}\label{HF2}
\mE(\Lambda_s |\xi|^\beta)\lesssim 1,\ \ \mE(\Lambda_s |L^{(\alpha)}_s|^\beta)\lesssim s^{\beta/\alpha},\ \ \forall s\in[0,1].
\end{align}
For the term containing $I_1$,  by \eqref{FD6} and \eqref{FD4}, we have  for any $\beta\in(0,1]\cap(0,\alpha)$,
$$
\left|\mE \left(I_1(\xi)\b1_{|\xi|\leq R}\right)\right|
\leq \frac{\|g\|_{\bC^{\gamma\wedge\beta}}}\eta\int^\eta_0\mE\Big(|Z^\xi_s-\xi|^{\gamma\wedge\beta}\Big)
\dif s\lesssim \eta^{\gamma\wedge\beta}+\eta^{\ff{\gamma\wedge\beta}{\alpha}}.
$$
For the term containing $I_2$,   by the ergodicity of EM \eqref{EM1-1}, noting that
$$
\mE f(Z^{\xi}_\eta)=\mE f(Y_1)=\mE f(\xi),
$$
we have
$$
\mE \Big((f(Z^{\xi}_\eta)-f(\xi))\b1_{|\xi|\leq R}\Big)=-\mE \Big((f(Z^{\xi}_\eta)-f(\xi))\b1_{|\xi|>R}\Big).
$$
Hence, for any $\beta\in(0,\alpha)$,
\begin{align*}
\left|\mE \left(I_2(\xi)\b1_{|\xi|\leq R}\right)\right|&\leq\eta^{-1}\mE\Big(|f(Z^{\xi}_\eta)-f(\xi)|\b1_{|\xi|>R}\Big)
\leq\eta^{-1}\|f\|_{L^\infty_{\rm log}}\mE\Big(\Lambda_\eta\b1_{|\xi|>R}\Big)\\
&\overset{\eqref{926-4}}\lesssim\eta^{-1}\|g\|_{\bC^\gamma}\mE\Big(\Lambda_\eta |\xi|^\beta\Big)/R^{\beta}
\overset{\eqref{FD6}, \eqref{HF2}}\lesssim \eta^{-1}R^{-\beta}.
\end{align*}
For the term containing $I_3$,   we have for any $\zeta\in((\alpha-\gamma)_+,\alpha)$
and $\beta\in(0,\alpha)$,
\begin{align*}
\left|\mE \left[I_3(\xi)\b1_{|\xi|\leq R}\right]\right|
&\leq\frac1\eta\int^\eta_0\mE\left|\sL^{(\alpha)}_{\sigma(Z^{\xi}_s)} f(Z^{\xi}_s)-\sL^{(\alpha)}_{\sigma(\xi)} f(Z^{\xi}_s)\right|\dif s\\
&\overset{\eqref{DS5}}\lesssim\frac{\|f\|_{\bC^{\zeta+\gamma}_{\rm log}}}{\eta}\int^\eta_0\mE\left(\Lambda_s
(|Z^{\xi}_s-\xi|^\gamma\wedge 1)\right)\dif s\\
&\overset{\eqref{926-4}}\lesssim\frac{\|g\|_{\bC^{\gamma}}}\eta\int^\eta_0\mE\left[\Lambda_s
(|sb(\xi)|^{\gamma\wedge\beta}+|L^{(\alpha)}_s|^{\gamma\wedge\beta})\right]\dif s
\overset{\eqref{HF2}}\lesssim\eta^{\gamma\wedge\beta}+\eta^{\ff{\gamma\wedge\beta}{\alpha}}.
\end{align*}
Now let us treat the trouble term containing $I_4$, where the trouble comes from the lienar growth of $b$ and the nonexistence of the moments in the supercritical case $\alpha\in(\frac12,1)$.
We make the further decomposition
\begin{align*}
\mE \left(I_3(\xi)\b1_{|\xi|\leq R}\right)&=\frac1\eta\int^\eta_0\mE\left[(b(Z^{\xi}_s)-b(\xi+s b(\xi)))\cdot\nabla f(Z^{\xi}_s)\b1_{\{|\xi|\leq R, |\sigma(\xi)L^{(\alpha)}_s|\leq 1\}}\right]\dif s\\
&+\frac1\eta\int^\eta_0\mE\left[(b(Z^{\xi}_s)-b(\xi+s b(\xi)))\cdot\nabla f(Z^{\xi}_s)\b1_{\{|\xi|\leq R, |\sigma(\xi) L^{(\alpha)}_s|>1\}}\right]\dif s\\
&+\frac1\eta\int^\eta_0\mE\left[(b(\xi+s b(\xi))-b(\xi))\cdot\nabla f(Z^{\xi}_s)\b1_{\{|\xi|\leq R\}}\right]\dif s\\
&=:\cJ_{1}(R)+\cJ_{2}(R)+\cJ_{3}(R).
\end{align*}
For $\cJ_{1}(R)$, by $(\mathbf{H}^\gamma)$, we have  for any $\beta\in(0,1]\cap(0,\alpha)$,
\begin{align*}
|\cJ_{1}(R)|&\lesssim \frac{\|f\|_{\bC^{1}_{\rm log}}}\eta\int^\eta_0\mE\left[\Lambda_s|\sigma(\xi)L^{(\alpha)}_s|^{\gamma\wedge\beta}\right]\dif s
\overset{\eqref{926-4}}\lesssim \frac{\|g\|_{\bC^\gamma}}\eta\int^\eta_0\mE\left[\Lambda_s|L^{(\alpha)}_s|^{\gamma\wedge\beta}\right]\dif s\overset{\eqref{HF2}}\lesssim \eta^{\ff{\gamma\wedge\beta}{\alpha}}.
\end{align*}
For $\cJ_{2}(R)$, noting that by Stein's equation,
$$
(b\cdot\nabla f)(x)=\bar g(x)-\sL^{(\alpha)}_{\sigma}f(x),
$$
by \eqref{926-4} again, we have
$$
|(b\cdot\nabla f)(x)|\leq \|g\|_\infty+\left|\sL^{(\alpha)}_{\sigma}f(x)\right|
\overset{\eqref{DS4}}\lesssim \|g\|_\infty+\|\sL^{(\alpha)}_{\sigma}f\|_{L^\infty_{\log}}\log(\e+|x|)
\overset{\eqref{926-4}}\lesssim\|g\|_{\bC^\gamma}\log(\e+|x|).
$$
Hence, by \eqref{926-4} again, for any $\beta\in(0,1]\cap(0,\alpha)$,
\begin{align*}
|\cJ_{2}(R)|&\leq \frac1\eta\int^\eta_0\mE\left[\Big(|b(\xi+s b(\xi))\cdot\nabla f(Z^{\xi}_s)|+|b\cdot\nabla f|(Z^{\xi}_s)\Big)\b1_{\{|\xi|\leq R, |\sigma(\xi)L^{(\alpha)}_s|>1\}}\right]\dif s\\
&\lesssim \frac{\|g\|_{\bC^\gamma}}\eta\int^\eta_0 \mE \left[\log(\e+|Z^{\xi}_s|)(1+|\xi|)\b1_{\{|\xi|\leq R, |\sigma(\xi)L^{(\alpha)}_s|>1\}}\right]\dif s\\
&\lesssim\frac{1}\eta\int^\eta_0 \mE \left[\Lambda_s(1+|\xi|^\beta R^{1-\beta})|\sigma(\xi)L^{(\alpha)}_s|^\beta\right]\dif s
\overset{\eqref{HF2}}\lesssim R^{1-\beta}\eta^{\beta/\alpha}+\eta^{\beta/\alpha},
\end{align*}
where we have used that $\xi$ is independent with $(L^{(\alpha)}_s)_{s\in[0,\eta]}$ and
$$
(1+|x|)\b1_{\{|x|\leq R, |y|>1\}}
\leq (1+|x|^\beta R^{1-\beta})|y|^\beta.
$$
For $\cJ_{3}(R)$, noting that for $\beta\in(0,1]\cap(0,\alpha)$,
\begin{align*}
&|b(x+sb(x))-b(x)|\b1_{|x|\leq R}
\leq\kappa_0(|sb(x)|^{\gamma\wedge\beta}+|sb(x)|)\b1_{|x|\leq R}\\
&\quad\lesssim s^{\gamma\wedge\beta}(1+|x|^{\gamma\wedge\beta})+s (1+|x|)\b1_{|x|\leq R}
\leq s^{\gamma\wedge\beta}(1+|x|^{\gamma\wedge\beta})+s (1+R^{1-\beta}|x|^\beta),
\end{align*}
we have
\begin{align*}
|\cJ_{3}(R)|
&\lesssim \frac{\|f\|_{\bC^{1}_{\rm log}}}\eta\int^\eta_0
 \mE \left[\Lambda_s(s^{\gamma\wedge\beta}(1+|\xi|^{\gamma\wedge\beta})+s(1+R^{1-\beta}|\xi|^\beta))\right]\dif s
\overset{\eqref{HF2}}\lesssim \eta^{\gamma\wedge\beta}+\eta R^{1-\beta}.
\end{align*}
Combining the above calculations, we obtain that for any $\beta\in(0,1]\cap(0,\alpha)$,
\begin{align*}
\left|\mE(\sL f(\xi)\b1_{|\xi|\leq R})\right|
\lesssim \eta^{\gamma\wedge\beta}+\eta^{\ff{\gamma\wedge\beta}{\alpha}}+\eta^{-1} R^{-\beta}
+ R^{1-\beta}\eta^{\beta/\alpha}+\eta^{\beta/\alpha}+\eta R^{1-\beta},
\end{align*}
which together with \eqref{FD3} yields that
$$
\cW_{\bf d}(\mu,\mu_\eta)
\lesssim \eta^{\gamma\wedge\beta}+\eta^{\ff{\gamma\wedge\beta}{\alpha}}+\eta^{-1} R^{-\beta}
+ R^{1-\beta}\eta^{\beta/\alpha}+\eta^{\beta/\alpha}+\eta R^{1-\beta}+R^{-\beta}\mE|\xi|^\beta.
$$
When $\alpha\in(1,2)$, one chooses $\beta=1$ and $R=\eta^{-1-\frac\gamma\alpha}$, then obtain
$$
\cW_{\bf d}(\mu,\mu_\eta)\lesssim\eta^{\gamma/\alpha}.
$$
When $\alpha\in(\frac12,1]$, for any $\beta\in(0,\alpha)$, one chooses $R=\eta^{-1-\frac\beta\alpha}$, then obtain
$$
\cW_{\bf d}(\mu,\mu_\eta)\lesssim\eta^{\gamma\wedge\beta}+\eta^{\ff{\gamma\wedge\beta}{\alpha}}
+\eta^{\beta(1+\frac\beta\alpha)-1}.
$$
The proof is complete by taking $\beta$ close to $\alpha$.
\end{proof}

\begin{appendix}

\setcounter{equation}{0}

\renewcommand{\theequation}{A.\arabic{equation}}

\section{Ergodicity of the EM scheme \eqref{EM1-1}}\label{EMergo}

First of all, we present the following lemma.

\begin{lemma}\label{nol:decay}
Let $m\in(0,\alpha)$ and $\rho_m(x)=(1+|x|^2)^{m/2}$.
For any $d\times d$-matrix $A$ and $x\in\mR^d$, we have
$$
\sL^{(\alpha)}_{A} \rho_m(x)\leq \frac{2\pi^{\ff{d}{2}}}{\Gamma(\ff{d}{2})} \left(\frac{m}{2-\alpha}+\frac{2^{1+\alpha} }{\alpha-m}\right) \|A\|^\alpha\rho_{m-\alpha}(x).
$$
\end{lemma}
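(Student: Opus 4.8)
The plan is to compute $\sL^{(\alpha)}_A\rho_m(x)$ directly from the symmetrized Lévy representation, exploiting the radial monotonicity properties of $\rho_m$ and splitting the integral into a near-field and a far-field region. First I would use the symmetry of the Lévy measure to write, as in \eqref{L:sy},
$$
\sL^{(\alpha)}_A\rho_m(x)=\frac12\int_{\mR^d}\delta^{(2)}_{Az}\rho_m(x)\,\frac{\dif z}{|z|^{d+\alpha}},
$$
so that no principal-value/compensator term is needed. The key analytic inputs about $\rho_m$ are: (i) $\rho_m$ is smooth with $|\nabla^2\rho_m(x)|\lesssim m\,\rho_{m-2}(x)\le m\,\rho_{m-\alpha}(x)(1+|x|)^{\alpha-2}$ type bounds — more precisely, a clean pointwise estimate $|\delta^{(2)}_{h}\rho_m(x)|\le m\,|h|^2\,\sup_{|\theta|\le|h|}\rho_{m-2}(x+\theta)$ from Taylor's theorem; and (ii) since $m<\alpha<2$, $\rho_m$ is concave at infinity in the relevant sense so that $\delta^{(2)}_h\rho_m(x)\le C|h|^m$ uniformly, or more simply $|\delta^{(2)}_h\rho_m(x)|\le 2\cdot 2^m|h|^m$ from the subadditivity $\rho_m(x+h)+\rho_m(x-h)\le 2\rho_m(x)+2^{1+m}|h|^m$ — wait, I would rather use the bound $|\delta^{(2)}_h\rho_m(x)|\le 4\,(|h|^m\wedge(\text{second-order term}))$. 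Combining the $|h|^2$-bound for small $|h|$ with a crude $|h|^m$-type or constant-times-$\rho_m(x)$ bound for large $|h|$ is what produces the two fractions $\frac{m}{2-\alpha}$ and $\frac{2^{1+\alpha}}{\alpha-m}$.

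Concretely, after the change of variables $w=Az$ I would bound $|w|\le\|A\||z|$ and split at $|z|=1$ (equivalently at a radius comparable to $1$). On $\{|z|\le1\}$: use $|\delta^{(2)}_{Az}\rho_m(x)|\lesssim |Az|^2\rho_{m-2}(x)\le\|A\|^2|z|^2\rho_{m-\alpha}(x)$ (here I use $\rho_{m-2}\le\rho_{m-\alpha}$ since $m-2\le m-\alpha$), integrate $\int_{|z|\le1}|z|^{2-d-\alpha}\dif z=\frac{2\pi^{d/2}}{\Gamma(d/2)}\cdot\frac{1}{2-\alpha}$, and absorb the factor $m$ from the Hessian bound; this contributes the $\frac{m}{2-\alpha}$ term. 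On $\{|z|>1\}$: use the homogeneity/subadditivity bound $\delta^{(2)}_{Az}\rho_m(x)\le \rho_m(x+Az)+\rho_m(x-Az)\le 2^{1+m}\|A\|^m|z|^m\rho_m(x)$... but that gives $\rho_m$ not $\rho_{m-\alpha}$, so instead I would note $\delta^{(2)}_{Az}\rho_m(x)\le 2(\rho_m(x+Az)-\rho_m(x))_+$ type control is not available; the cleaner route is $|\delta^{(2)}_{Az}\rho_m(x)|\le 4\|A\|^{m}|z|^{m}\cdot$ ... I would instead bound it by $2^{2+m}|Az|^{m-\alpha}\cdot|Az|^{\alpha}$ — no. The honest version: on $|z|>1$ use $\rho_m(x\pm Az)\le C_m(\rho_m(x)+|Az|^m)$ and the crude but sufficient $\rho_m(x)\le\rho_m(x)\le(1+|x|)^{\alpha}\rho_{m-\alpha}(x)$, then integrate $\int_{|z|>1}|z|^{m-d-\alpha}\dif z=\frac{2\pi^{d/2}}{\Gamma(d/2)}\cdot\frac{1}{\alpha-m}$; tracking the constant $2^{1+\alpha}$ (coming from $2^{1+m}\le2^{1+\alpha}$ and the factor $\tfrac12$ out front) yields the $\frac{2^{1+\alpha}}{\alpha-m}$ term. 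The surface-area constant $\frac{2\pi^{d/2}}{\Gamma(d/2)}=|\mathbb S^{d-1}|$ factors out of both pieces, giving exactly the stated bound.

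The main obstacle is getting the right-hand side to be a multiple of $\rho_{m-\alpha}(x)$ rather than $\rho_m(x)$, i.e., capturing the decay of order $\alpha$: naive bounds only give $\sL^{(\alpha)}_A\rho_m(x)\lesssim\rho_m(x)$. The resolution is that for large $|x|$ the second-order Taylor bound on the near-field already produces $\rho_{m-2}(x)\le|x|^{-\alpha}\rho_{m}(x)\cdot|x|^{\alpha-2+\dots}$ — precisely $\rho_{m-2}\le\rho_{m-\alpha}$ pointwise since the exponent $m-2\le m-\alpha$ — and for the far-field one uses that $\rho_m$ grows only like $|x|^m$ so $\rho_m(x)\le\rho_{m-\alpha}(x)(1+|x|^2)^{\alpha/2}$, but crucially on $|z|>1$ the integrand already carries the needed decay because one estimates $\delta^{(2)}_{Az}\rho_m(x)$ against $|Az|^{m}$ which, combined with $|z|^{-d-\alpha}$, leaves an integrable remainder after pulling out $\|A\|^{\alpha}\rho_{m-\alpha}(x)$ via $|Az|^{m}=|Az|^{m-\alpha}|Az|^{\alpha}$ and $|Az|^{m-\alpha}\le(1+|x|)^{?}$ — I will need to be careful here and the cleanest rigorous argument is likely the concavity-type inequality $\rho_m(x+h)+\rho_m(x-h)-2\rho_m(x)\le c_{d,m}\,|h|^{\alpha}\,\rho_{m-\alpha}(x)$ valid for all $h$ when $0<m<\alpha<2$, proved by a one-dimensional reduction and calculus; establishing this uniform inequality (with the explicit constants $m/(2-\alpha)$ and $2^{1+\alpha}/(\alpha-m)$ emerging from the two regimes $|h|\le\langle x\rangle$ and $|h|>\langle x\rangle$) is the technical heart of the proof, after which the integration against $|z|^{-d-\alpha}\dif z$ is routine and yields the surface measure constant $2\pi^{d/2}/\Gamma(d/2)$.
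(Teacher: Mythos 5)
Your starting point (the compensator-free symmetrized formula \eqref{L:sy}, a near/far splitting, a second-order Taylor bound on the near field and a sublinear bound on the far field) is the same as the paper's, but the concrete execution has a genuine gap: you split at the \emph{fixed} radius $|z|=1$. With that cutoff the near-field piece scales like $\|A\|^2$, the far-field piece like $\|A\|^m$, and the far field moreover leaves a term of size $\rho_m(x)$ rather than $\rho_{m-\alpha}(x)$ --- you notice this yourself (``that gives $\rho_m$ not $\rho_{m-\alpha}$''), and rewriting $\rho_m(x)\le (1+|x|^2)^{\alpha/2}\rho_{m-\alpha}(x)$ does not repair it, since that inequality goes the wrong way by exactly the factor $|x|^\alpha$ you need to gain. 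The paper's proof instead splits at an $x$- and $A$-dependent radius: $R=\tfrac{|x|}{2\|A\|}$ when $|x|>1$ (and $R=\tfrac1{\|A\|}$ when $|x|\le 1$). On $|z|\le R$ the Taylor remainder is evaluated at points $x+\theta Az$ with $|x+\theta Az|\ge |x|/2$, giving $|\delta^{(2)}_{Az}\rho_m(x)|\le 2m\|A\|^2|z|^2\big(1+\tfrac{|x|^2}{4}\big)^{\frac m2-1}$, and $\int_{|z|\le R}|z|^{2-d-\alpha}\dif z\propto R^{2-\alpha}$ converts $\|A\|^2\rho_{m-2}$ into $\|A\|^\alpha|x|^{m-\alpha}$ with the constant $\tfrac{m}{2-\alpha}$; on $|z|>R$ one may use the \emph{first-order} difference (legitimate by symmetry of the truncated measure) together with $\delta_{Az}\rho_m(x)\le |x|^m+2^{m/2}\|A\|^m|z|^m$, and integrating over $|z|>R$ again yields $\|A\|^\alpha|x|^{m-\alpha}$, with the constant $\tfrac{2^{1+\alpha}}{\alpha-m}$. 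So the $x,A$-dependent cutoff is not a refinement but the mechanism that simultaneously produces the $\|A\|^\alpha$ prefactor and the $\rho_{m-\alpha}$ decay; it is absent from your plan.

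Your fallback, the ``uniform concavity inequality'' $\delta^{(2)}_h\rho_m(x)\le c\,|h|^\alpha\rho_{m-\alpha}(x)$ for all $h$, cannot serve as the technical heart even if it were proved: $|h|^\alpha$ is the critical power for the L\'evy measure $\dif z/|z|^{d+\alpha}$, so the proposed final step $\int_{\mR^d}|z|^{\alpha}\,|z|^{-d-\alpha}\dif z$ diverges logarithmically at both the origin and infinity --- the ``routine integration'' does not exist. The two-regime information (quadratic in $|h|$ below the scale $\asymp |x|/\|A\|$, of order $|x|^m+|h|^m$ above it) must be carried \emph{through} the integration, because the convergent factors $\tfrac1{2-\alpha}$ and $\tfrac1{\alpha-m}$ only appear after integrating each bound over its own range of $|z|$. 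The remaining ingredients of your outline (the Hessian bound $\langle\xi,\nabla^2\rho_m(x)\xi\rangle\le m(1+|x|^2)^{\frac m2-1}|\xi|^2$, the subadditivity of $r\mapsto r^{m/2}$ for the tail, the surface-area constant $2\pi^{d/2}/\Gamma(d/2)$ factoring out) do match the paper, but without the moving cutoff the stated bound is not reachable along your route.
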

\begin{proof}
We assume $A\not=0_{d\times d}$, and divide the proof into two cases.

(i) We first consider $|x|>1$. Write $R:=\tfrac{|x|}{2\|A\|}$.
By \eqref{L:sy}, we  make the following decomposition:
\begin{align*}
\sL^{(\alpha)}_{A} \rho_m(x)
=\frac12\int_{|z|\leq R}\delta^{(2)}_{Az}\rho_m(x)\frac{\dif z}{|z|^{d+\alpha}}
+\frac12\int_{|z|>R}\delta^{(2)}_{Az}\rho_m(x)\frac{\dif z}{|z|^{d+\alpha}}=:I_1+I_2.
\end{align*}
For $I_1$, we need to estimate $\delta^{(2)}_{Az}\rho_m(x)$ for any $|z|\leq R$.
By the calculus, we have
$$
\nabla^2 \rho_m(x)=m(1+|x|^2)^{\frac{m}{2}-2}[(1+|x|^2)\mI-(2-m)x\otimes x].
$$
Since $m<\alpha<2$, we immediately have for any $\xi\in\mathbb{R}^d$,
\begin{align}\label{bs:0}
\langle\xi, \nabla^2 \rho_m(x)\xi\rangle\leq m(1+|x|^2)^{\frac{m}{2}-1}|\xi|^2\leq m |\xi|^2.
\end{align}
By Taylor's expansion, we have for any $|z|\leq R=\tfrac{|x|}{2\|A\|}$,
\begin{align*}
|\delta^{(2)}_{Az}\rho_m(x)|
&=\left|\int^1_0\int^1_{-1}\langle \nabla^2 \rho_m(x+r_1 r_2Az)Az, r_{2}Az\rangle\dif r_1\dif r_2\right|\\
&\leq m|Az|^2\int^1_0\int^1_{-1}r_{2}(1+|x+r_1r_2Az|^2)^{\frac m2-1}\dif r_1\dif r_2\\
&\leq 2m\|A\|^2|z|^2\Big(1+\tfrac{|x|^2}{4}\Big)^{\frac m2-1}.
\end{align*}
Hence, recalling $R=\tfrac{|x|}{2\|A\|}$ and $S_d=2\pi^{\ff{d}{2}}/\Gamma(\ff{d}{2})$, we have
\begin{align*}
I_1&\leq m \|A\|^2\Big(1+\tfrac{|x|^2}{4}\Big)^{\frac m2-1}\int_{|z|\leq R}|z|^{2-d-\alpha}\dif z\\
     &=m\|A\|^2\Big(1+\tfrac{|x|^2}{4}\Big)^{\frac m2-1}S_d \int_0^{R}r^{1-\alpha}\dif r
     \leq \frac{mS_d}{2-\alpha}\|A\|^\alpha |x|^{m-\alpha}.
\end{align*}
For $I_2$, noting that
$$
|a+b|^r\leq |a|^r+|b|^r,\quad \forall a,b\in\mR,\ r\in(0,1],
$$
we have for any $x,z\in\mR^d$
\begin{align}\label{bs:1}
\delta_{z}\rho_m(x)&=|(1+|x+z|^2)^{\frac m2}-(1+|x|^2)^{\frac m2}|\leq ||x+z|^2-|x|^2|^{\ff{m}{2}}\no\\
&\leq (2|x||z|+|z|^2)^{\ff{m}{2}}\leq (|x|^2+2|z|^2)^{\ff{m}{2}}\leq |x|^m+2^{\ff{m}{2}}|z|^m.
\end{align}
Then recalling $R=\tfrac{|x|}{2\|A\|}$, we have
\begin{align*}
I_2&=\int_{|z|>R}\delta_{Az}\rho_m(x)\frac{\dif z}{|z|^{d+\alpha}}
\leq \int_{|z|>R}\Big(|x|^m+2^{\ff{m}{2}}\|A\|^m|z|^m\Big)\frac{\dif z}{|z|^{d+\alpha}}\\
&\leq  S_d\int_{R}^\infty
\Big(|x|^m+2^{\ff{m}{2}}\|A\|^m r^m\Big)r^{-1-\alpha}\dif r
\leq \frac{2^{1+\alpha} S_d}{\alpha-m}\|A\|^\alpha|x|^{m-\alpha}.
\end{align*}
Combining the above calculations, we get for $|x|>1$,
\begin{align*}
\sL^{(\alpha)}_{A} \rho_m(x)
\leq  S_d \left(\frac{m}{2-\alpha}+\frac{2^{1+\alpha} }{\alpha-m}\right) \|A\|^\alpha|x|^{m-\alpha}
\leq  S_d \left(\frac{m}{2-\alpha}+\frac{2^{1+\alpha} }{\alpha-m}\right) \|A\|^\alpha\rho_{m-\alpha}(x).
\end{align*}

(ii) We consider $|x|\leq 1$. By \eqref{L:sy}, noting that
$$
\delta_{Az}\rho_m(x)\overset{\eqref{bs:1}}\leq
1+2^{\ff{m}{2}}|Az|^m\leq
1+2^{\ff{m}{2}}\|A\|^m|z|^m,
$$
we directly get
\begin{align*}
\sL^{(\alpha)}_{A} \rho_m(x)
&=\frac12\int_{|z|\leq \ff{1}{\|A\|}}\delta^{(2)}_{Az}\rho_m(x)\frac{\dif z}{|z|^{d+\alpha}}
+\int_{|z|>\ff{1}{\|A\|}}\delta_{Az}\rho_m(x)\frac{\dif z}{|z|^{d+\alpha}}\\
&\overset{\eqref{bs:0}}\leq m\|A\|^2\int_{|z|\leq\ff{1}{\|A\|}}|z|^2\frac{\dif z}{|z|^{d+\alpha}}
+\int_{|z|>\ff{1}{\|A\|}}\left(1+2^{\ff{m}{2}}\|A\|^m|z|^m\right)\frac{\dif z}{|z|^{d+\alpha}}\\
& \leq m S_d \|A\|^2\int_0^{\ff{1}{\|A\|}}r^{1-\alpha}\dif r
+S_d\int_{\ff{1}{\|A\|}}^\infty \left(1+2^{\ff{m}{2}}\|A\|^m r^m\right)r^{-1-\alpha}\dif r\\
&\leq \ S_d \left(\frac{m}{2-\alpha}+\frac{2^{1+\ff m2} }{\alpha-m}\right) \|A\|^\alpha\leq  S_d \left(\frac{m}{2-\alpha}+\frac{2^{1+\alpha} }{\alpha-m}\right) \|A\|^\alpha\rho_{m-\alpha}(y).
\end{align*}
Combining (i) and (ii), we complete the proof.
\end{proof}

Recalling the EM scheme $(Y^x_k)_{k\geq 0}$ defined in \eqref{EM1-1}, we have
\begin{lemma}\label{W1.1-1L}
Under $(\mathbf{H}^\gamma)$ and \eqref{Dis}, for any $m\in(0,\alpha)$,
there exist $\eta_0=\eta_0(\Theta)\in(0,1)$ and $C_1,C_2, C_3>0$ depending on $\Theta,m,\eta_0$
such that for any $\eta\in(0,\eta_0)$, there is a unique invariant probability measure $\mu_{\eta}$ such that for any $x\in\mR^d$ and $k\in\mN$,
\begin{align}
\mathbb{E}|Y_{k}^{x}|^{m}\leq (1-C_1\eta)^k\rho_{m}(x)+C_2,\label{js0}
\end{align}
and for any $f\in L_m^\infty(\mR^d)$,
\begin{align}
\sup_{\|f/\rho_{m}\|_{\infty}\leq 1}|\mathbb{E}f(Y_{k}^{x})-\mu_{\eta}(f)|\leq C_3 \rho_{m}(x)\e^{-\lambda k},\label{js1}
\end{align}
where $\lambda=-\log(1-C_1\eta)>0$.
\end{lemma}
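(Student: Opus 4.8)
The plan is to derive \eqref{js0} from a one-step geometric Foster--Lyapunov drift inequality for the Markov chain \eqref{EM1-1} with Lyapunov function $\rho_m$, to combine this with a uniform minorization on balls, and then to invoke the standard Harris / Meyn--Tweedie theorem, which simultaneously yields existence and uniqueness of $\mu_\eta$ and the weighted geometric ergodicity \eqref{js1}. Throughout I fix $m\in(0,\alpha)$, so that the $\alpha$-stable increment has a finite $m$-th moment.

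First I would prove the one-step drift: there exist $\eta_0\in(0,1)$ and $C_1,C_2>0$ depending only on $\Theta$ and $m$ with
$$
\mathbb{E}\big[\rho_m(Y^x_{k+1})\,\big|\,Y^x_k=y\big]\le(1-C_1\eta)\rho_m(y)+C_2\eta,\qquad \eta\in(0,\eta_0).
$$
Write one step as $Y^x_{k+1}=\widetilde y+\sigma(y)\zeta$ with $\widetilde y:=y+\eta b(y)$ and $\zeta:=L^{(\alpha)}_\eta\overset{d}{=}\eta^{1/\alpha}L^{(\alpha)}_1$. For the deterministic part, \eqref{Dis} together with $|b(y)|\le\kappa_0(1+|y|)$ gives $1+|\widetilde y|^2\le(1-\kappa_1\eta)(1+|y|^2)+C\eta$ for $\eta$ small, whence, by a multiplicative expansion of $(\,\cdot\,)^{m/2}$ (treating $|y|$ large and $|y|$ bounded separately), $\rho_m(\widetilde y)\le(1-c_1\eta)\rho_m(y)+C\eta$ with $c_1:=\tfrac{m\kappa_1}{8}$. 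Next I would write $\rho_m(Y^x_{k+1})-\rho_m(\widetilde y)=\delta_{\sigma(y)\zeta}\rho_m(\widetilde y)$ and split according to $|\zeta|\le1$ and $|\zeta|>1$. On $\{|\zeta|\le1\}$, Taylor's formula, the Hessian bound $\langle\xi,\nabla^2\rho_m(x)\xi\rangle\le m|\xi|^2$ (cf. the proof of Lemma \ref{nol:decay}), and the symmetry of $L^{(\alpha)}$ (which kills the first-order term in expectation) give $\mathbb{E}[\delta_{\sigma(y)\zeta}\rho_m(\widetilde y)\mathbf{1}_{|\zeta|\le1}]\le\tfrac{m}{2}\kappa_0^2\,\mathbb{E}[|\zeta|^2\mathbf{1}_{|\zeta|\le1}]\lesssim\eta$, using the truncated second-moment estimate for the stable increment. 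On $\{|\zeta|>1\}$, the elementary inequality $|a^{m/2}-b^{m/2}|\le|a-b|^{m/2}$ yields $\delta_{\sigma(y)\zeta}\rho_m(\widetilde y)\le 2^{m/2}|\widetilde y|^{m/2}|\sigma(y)\zeta|^{m/2}+|\sigma(y)\zeta|^m$, and Young's inequality gives $\delta_{\sigma(y)\zeta}\rho_m(\widetilde y)\le\varepsilon\rho_m(\widetilde y)+C_\varepsilon(1+|\zeta|^m)$ for any $\varepsilon>0$; taking expectations and using $\mathbb{P}(|\zeta|>1)\lesssim\eta$ and $\mathbb{E}[|\zeta|^m\mathbf{1}_{|\zeta|>1}]\lesssim\eta$ (both from $\alpha$-stable scaling and $m<\alpha$) bounds this piece by $C\varepsilon\eta\,\rho_m(\widetilde y)+C_\varepsilon\eta$. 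Choosing $\varepsilon$ with $C\varepsilon<c_1/2$ and collecting the three estimates gives the drift with $C_1=c_1/2$; iterating and summing the geometric series gives \eqref{js0} with a constant $C_2$ independent of $x,k$.

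Second, I would establish the minorization. Since $\sigma(y)$ is invertible with $\|\sigma^{-1}(y)\|\le\kappa_0$ and $\mathrm{Law}(L^{(\alpha)}_\eta)$ has a strictly positive continuous density on $\mathbb{R}^d$, the one-step transition kernel $Q_\eta(y,\mathrm dz)$ of \eqref{EM1-1} has a density which is jointly continuous in $(y,z)$ and strictly positive; hence for each $R>0$ there are $\delta_R>0$ and a probability measure $\nu_R$ with $Q_\eta(y,\cdot)\ge\delta_R\nu_R$ for all $|y|\le R$, i.e. every ball is a small set. The sublevel sets of $\rho_m$ being bounded, the drift of the previous step together with this minorization places us in the classical Harris / Meyn--Tweedie framework, which provides a unique invariant probability measure $\mu_\eta$ (with $\mu_\eta(\rho_m)<\infty$ by \eqref{js0}) and the $\rho_m$-weighted geometric convergence \eqref{js1} with $\lambda=-\log(1-C_1\eta)>0$.

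The hard part will be the one-step drift, and within it the large-jump contribution: because the $\alpha$-stable increment has only moments of order strictly below $\alpha$, a plain second-order Taylor expansion of $\rho_m$ against the whole increment is unavailable, and naively the large jumps produce a term of order $\eta\,\rho_m(y)$ whose prefactor need not be small, which would destroy the contraction. The resolution is to bound the large-jump increment crudely via $|a^{m/2}-b^{m/2}|\le|a-b|^{m/2}$ and then absorb the resulting $\rho_m(\widetilde y)$ factor into the negative deterministic drift by Young's inequality with a sufficiently small parameter, exploiting that both $\mathbb{P}(|\zeta|>1)$ and $\mathbb{E}[|\zeta|^m\mathbf{1}_{|\zeta|>1}]$ are $O(\eta)$ — which is exactly where the restriction $m<\alpha$ is used.
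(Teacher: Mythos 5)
Your proposal is correct, and its skeleton is the same as the paper's: a one-step Foster--Lyapunov drift for $\rho_m$, a minorization coming from the strictly positive density of the stable increment, and then the Meyn--Tweedie/Harris theorem for existence, uniqueness and the weighted geometric bound \eqref{js1}. The genuine difference is in how the jump contribution to the drift is controlled. The paper runs It\^o's formula on the auxiliary process $Z^{\eta,x}_t=x+\eta b(x)+\sigma(x)L^{(\alpha)}_t$ and invokes Lemma \ref{nol:decay}, whose point is that splitting the L\'evy integral at the $x$-dependent radius $R=|x|/(2\|A\|)$ yields $\sL^{(\alpha)}_{A}\rho_m\lesssim \|A\|^\alpha\rho_{m-\alpha}$; since $m<\alpha$ this is uniformly bounded, so the jump part contributes only $K_3\eta$ and no absorption into the contraction is needed. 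You instead estimate the one-step increment directly, splitting at the fixed threshold $|\zeta|=1$: symmetry kills the first-order term, the truncated second moment and the tail quantities $\mP(|\zeta|>1)$, $\mE[|\zeta|^m\b1_{|\zeta|>1}]$ are all $O(\eta)$ (this is where $m<\alpha$ enters for you, as $m-\alpha<0$ enters for the paper), and the residual $O(\eta)\,\rho_m(\widetilde y)$ term is absorbed into the deterministic contraction by a Young-inequality choice of small $\varepsilon$. Your route is more elementary (no It\^o formula, no operator estimate) at the cost of the $\varepsilon$-absorption and slightly messier constants; the paper's route localizes all the jump analysis in a reusable operator bound with a cleaner, absorption-free drift. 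The ergodicity step is the same in both: the paper gets the small-set/minorization from the two-sided density bound \eqref{heat} and cites \cite[Theorem 6.3]{M-T2}, while you argue positivity and continuity of the one-step transition density; both are standard and adequate.
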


\begin{proof}
Let $\rho_m(x)=(1+|x|^2)^{m/2}$ with $m\in(0,\alpha)$.
Fix $\eta>0$ and $x\in\mR^d$. We introduce an auxiliary process
$$Z^{\eta,x}_t:=x+\eta b(x)+\sigma(x) L^{(\alpha)}_t,\quad \forall t\in[0,\eta].$$
By It\^o's formula, we have
\begin{align}\label{AA1}
\mE[\rho_m(Z^{\eta,x}_\eta)]=\rho_m(Z^{\eta,x}_0)+\int^\eta_0 \mE\left[\sL^{(\alpha)}_{\sigma(x)}\rho_m(Z^{\eta,x}_s)\right]\dif s.
\end{align}
On the other hand, by the chain rule and $(\mathbf{H}^\gamma)$, we have
\begin{align*}
\rho_m(Z^{\eta,x}_0)&=\rho_m(x)+\int^\eta_0\left\< b(x),\nabla\rho_m(x+rb(x))\right\>\dif r\\
&=\rho_m(x)+m\int^\eta_0\left\< b(x),x+rb(x)\right\>\rho_{m-2}(x+rb(x))\dif r\\
&\leq\rho_m(x)+m\int^\eta_0\left(-\kappa_1|x|^2+\kappa_2+r|b(x)|^2\right)\rho_{m-2}(x+rb(x))\dif r.
\end{align*}
Since $b$ is linear growth, there are $\eta_1>0$ and $K_1>1$ so that for all $r\in[0,\eta_1]$,
$$
r|b(x)|^2\leq \kappa_1|x|^2/2+K_1,
$$
and
$$
K^{-1}_1 \rho_{m-2}(x)\leq \rho_{m-2}(x+rb(x))\leq K_1\rho_{m-2}(x).
$$
Hence, there are $C_1,K_2>0$ and $\eta_0=\eta_0(\Theta)\in(0,1)$ such that for any $\eta\in(0,\eta_0)$,
\begin{align}\label{AA03}
\rho_m(Z^{\eta,x}_0)
&\leq (1-C_1\eta)\rho_m(x)+K_2 \eta.
\end{align}
On the other hand, by Lemma \ref{nol:decay}, there is a $K_3>0$ such that for all $\eta>0$,
\begin{align}\label{AA3}
\int^\eta_0 \mE\left[\sL^{(\alpha)}_{\sigma(x)}\rho_m(Z^{\eta,x}_s)\right]\dif s\leq K_3\eta.
\end{align}
Combining \eqref{AA1}, \eqref{AA03} and \eqref{AA3}, there is a $C_2=C_2(\Theta)>0$ such that for all $\eta\in(0,\eta_0)$ and $x\in\mR^d$,
\begin{align}\label{AA2}
\mE[\rho_m(Z^{\eta,x}_\eta)]\leq (1-C_1\eta)\rho_m(x)+C_2\eta.
\end{align}
Since $Y^x_k$ and $L_{\eta(k+1)}-L_{\eta k}\sim L_\eta$ are independent, we have
\begin{align*}
\mE[\rho_m(Y^x_{k+1})]
&=\mE[\mE[\rho_m(Z^{\eta,y}_\eta)]|_{y=Y^x_k}]
\leq(1-C_1\eta)\mE[\rho_m(Y^x_k)]+C_2\eta
\leq\cdots\\
&\leq (1-C_1\eta)^{k+1}\rho_m(x)+C_2\eta\sum_{n=1}^\infty (1-C_1\eta)^{n},
\end{align*}
which in turn yields \eqref{js0}.

Finally, since $L^{(\alpha)}_\eta$ has a smooth density $q_\eta (z)$ with the following two-sided estimate:
\begin{align}\label{heat}
q_\eta (z)\overset{\eqref{TG:ep}}\asymp\eta (\eta^{\frac{1}{\alpha}}+|z|)^{-d-\alpha},
\end{align}
the exponential ergodicity \eqref{js1} follows by \eqref{js0} and
\cite[Theorem 6.3]{M-T2}.
\end{proof}

\setcounter{equation}{0}

\renewcommand{\theequation}{B.\arabic{equation}}

\section{Explicit convergence rate of EM schemes for OU processes}\label{A:OU}

In this appendix, we consider the one-dimensional Ornstein-Uhlenbeck (OU) process:
\begin{align}\label{ou}
\dif X^x_t=-X^x_t\dif t+\dif L^{(\alpha)}_t,\quad X^x_0=x,
\end{align}
where $\alpha\in(0,2)$, and the corresponding EM scheme: for $\eta\in(0,1)$ and $k \geq 0$,
\begin{align}\label{ou1}
Y^x_{k+1} = Y^x_k -\eta Y^x_k + L^{(\alpha)}_{\eta(k+1)}- L^{(\alpha)}_{\eta k},  \ \ Y^x_0=x,
\end{align}
where $(L^{(\alpha)}_t)_{t\geq 0}$ is a $1$-dimensional symmetric $\alpha$-stable process with characteristic function 
$$
\mE \e^{i zL^{(\alpha)}_1}=\e^{-|z|^\alpha}.
$$
The main result of this section is presented as below:
\begin{proposition}\label{sy}
Let $\mu$, $\mu_\eta$ be the stationary distribution of \eqref{ou} and \eqref{ou1}, respectively. Then
$$
\mu\sim \alpha^{-\ff1\alpha}L^{(\alpha)}_1,\ \ \mu_\eta\sim (\tfrac{\eta}{1-(1-\eta)^{\alpha}})^{\ff1\alpha}L^{(\alpha)}_1.
$$
\end{proposition}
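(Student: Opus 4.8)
The plan is to verify directly that the two laws displayed in the statement are invariant; since the continuous SDE \eqref{ou} has a unique invariant measure (classical for the OU process, and a special case of \eqref{Dis} with $b(x)=-x$) and the chain \eqref{ou1} has a unique invariant measure by Lemma \ref{W1.1-1L}, invariance suffices. The only analytic input I will use is the standard formula for the characteristic function of a stochastic integral of a deterministic function against the symmetric $\alpha$-stable process, $\mE\exp(iz\int_0^T f(s)\,\dif L^{(\alpha)}_s)=\exp(-|z|^\alpha\int_0^T|f(s)|^\alpha\,\dif s)$, together with the self-similarity $L^{(\alpha)}_t\overset{d}{=}t^{1/\alpha}L^{(\alpha)}_1$ (both immediate from $\mE\,\e^{izL^{(\alpha)}_1}=\e^{-|z|^\alpha}$).

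For \eqref{ou}, variation of constants gives $X^x_t=\e^{-t}x+\int_0^t\e^{-(t-s)}\,\dif L^{(\alpha)}_s$. Taking $\xi\sim\alpha^{-1/\alpha}L^{(\alpha)}_1$ independent of $(L^{(\alpha)}_t)_{t\ge0}$, the characteristic function of $X^\xi_t$ factorizes as $\exp(-\tfrac{\e^{-\alpha t}}{\alpha}|z|^\alpha)\cdot\exp(-|z|^\alpha\int_0^t\e^{-\alpha u}\,\dif u)=\exp(-|z|^\alpha(\tfrac{\e^{-\alpha t}}{\alpha}+\tfrac{1-\e^{-\alpha t}}{\alpha}))=\exp(-\tfrac1\alpha|z|^\alpha)$, which is again the characteristic function of $\xi$. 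Hence $\alpha^{-1/\alpha}L^{(\alpha)}_1$ is invariant, so it equals $\mu$. (Equivalently, let $t\to\infty$: $\e^{-t}x\to0$ and $\int_0^t\e^{-(t-s)}\,\dif L^{(\alpha)}_s$ has characteristic function $\exp(-|z|^\alpha\tfrac{1-\e^{-\alpha t}}{\alpha})\to\exp(-|z|^\alpha/\alpha)$ independently of $x$.)

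For \eqref{ou1}, set $\xi_k:=L^{(\alpha)}_{\eta(k+1)}-L^{(\alpha)}_{\eta k}$, which are i.i.d.\ with $\xi_k\overset{d}{=}L^{(\alpha)}_\eta\overset{d}{=}\eta^{1/\alpha}L^{(\alpha)}_1$, and write $c:=\eta/(1-(1-\eta)^\alpha)$, which is finite and positive since $0<\eta<1$ forces $0<1-(1-\eta)^\alpha<1$. Let $\xi\sim c^{1/\alpha}L^{(\alpha)}_1$ be independent of $\xi_0$. Then $Y^\xi_1=(1-\eta)\xi+\xi_0$ has characteristic function $\exp(-c|(1-\eta)z|^\alpha)\exp(-\eta|z|^\alpha)=\exp(-(c(1-\eta)^\alpha+\eta)|z|^\alpha)=\exp(-c|z|^\alpha)$, because $c(1-\eta)^\alpha+\eta=c$ by the definition of $c$. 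Thus $c^{1/\alpha}L^{(\alpha)}_1$ is invariant for \eqref{ou1}, hence equals $\mu_\eta=(\tfrac{\eta}{1-(1-\eta)^\alpha})^{1/\alpha}L^{(\alpha)}_1$. (Corroborating this, unrolling the recursion gives $Y^x_k=(1-\eta)^kx+\sum_{j=0}^{k-1}(1-\eta)^{k-1-j}\xi_j$, which converges in law to $\sum_{j\ge0}(1-\eta)^j\xi_j$ with characteristic function $\prod_{j\ge0}\e^{-\eta|(1-\eta)^jz|^\alpha}=\exp(-c|z|^\alpha)$, the series converging since $\sum_{j\ge0}(1-\eta)^{\alpha j}=1/(1-(1-\eta)^\alpha)<\infty$.)

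I do not anticipate a serious obstacle: the argument is a short characteristic-function computation. The only points needing care are invoking the correct stable-integral and self-similarity identities, the two elementary algebraic checks $\tfrac{\e^{-\alpha t}}{\alpha}+\tfrac{1-\e^{-\alpha t}}{\alpha}=\tfrac1\alpha$ and $c(1-\eta)^\alpha+\eta=c$, and — if one uses the limiting rather than the fixed-point formulation — justifying the convergence in distribution of the geometric stable series and observing that the limit is independent of the starting point, so by the already established uniqueness it is the invariant law.
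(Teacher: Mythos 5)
Your proof is correct, but it runs in the converse direction to the paper's. The paper argues by necessity: it tests the stationarity identity with $f(x)=\e^{-izx}$ to obtain the ODE $z\hat\mu'(z)=-|z|^\alpha\hat\mu(z)$ with $\hat\mu(0)=1$ for the SDE \eqref{ou}, and for the chain \eqref{ou1} it iterates the functional equation $\widehat{\mu_\eta}(z)=\e^{-\eta|z|^\alpha}\widehat{\mu_\eta}((1-\eta)z)$ to reach $\widehat{\mu_\eta}(z)=\e^{-\eta|z|^\alpha/(1-(1-\eta)^\alpha)}$; in both cases the characteristic function of \emph{any} stationary law is pinned down, so uniqueness falls out of the computation itself. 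You instead propose the candidate laws and verify invariance (variation of constants plus the deterministic stable-integral formula for \eqref{ou}; the one-step identity $c(1-\eta)^\alpha+\eta=c$ for \eqref{ou1}), and then identify them with $\mu,\mu_\eta$ by appealing to uniqueness. That appeal needs one word of care: Lemma \ref{W1.1-1L} gives uniqueness only for $\eta\in(0,\eta_0)$ with $\eta_0$ possibly smaller than $1$, whereas Proposition \ref{sy} concerns every $\eta\in(0,1)$; but your parenthetical limiting arguments ($\e^{-t}x\to 0$ as $t\to\infty$ for the SDE, resp.\ the unrolled series $\sum_{j\ge 0}(1-\eta)^j\xi_j$ for the chain, both limits independent of the initial condition) supply exactly the required identification without any external lemma, so there is no gap. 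What your route buys is that it is more elementary and sidesteps the integrability of $xf'(x)$ against $\mu$ implicit in the paper's test-function step (a genuine point when $\alpha\le 1$, where $\mu$ has no first moment); what the paper's route buys is that it never has to guess the answer and yields uniqueness of the stationary law as a by-product of the same calculation.
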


\begin{proof}
(i) Since $\mu$ is the stationary distribution of SDE \eqref{ou}, we have for any $f\in\bC^2$,
$$
\int_\mR \big(\Delta^{\ff\alpha 2}f(x)-x f'(x)\big)\mu(\dif x)=0.
$$
In particular, taking $f(x)=\e^{-i zx}$, we get
\begin{align*}
\int_\mR \Delta^{\ff\alpha 2}\e^{-i zx}\mu(\dif x)=-i z\int_\mR x\e^{-i zx}\mu(\dif x)=z\p_z\int_\mR  \e^{-i zx}\mu(\dif x).
\end{align*}
Let $\hat{\mu}(z)=\int_\mR \e^{-i zx}\mu(\dif x)$ be the Fourier transform of $\mu$.
By \cite[p. 117]{S70}, we have
\begin{align*}
-|z|^{\alpha}\hat{\mu}(z)=\widehat{\Delta^{\ff\alpha 2}\mu}(z)=z  \hat{\mu}(z)',\quad \forall z\in\mR.
\end{align*}
Since $\hat\mu(0)=1$, solving this differential equation, we get
\begin{align}\label{ft1}
\hat{\mu}(z)=\e^{-|z|^\alpha/\alpha}\Rightarrow \mu\sim \alpha^{-\ff1\alpha}L^{(\alpha)}_1.
\end{align}

(ii) Let $Y_0=\xi$ and $\xi\sim\mu_{\eta}$ be independent of $(L^{(\alpha)}_t)_{t\geq 0}$. We have
$$
Y_1=(1-\eta)\xi+L^{(\alpha)}_\eta\sim \mu_\eta,
$$
which implies that
\begin{align*}
\widehat{\mu_\eta}(z)=\mE\[\e^{-iz\xi}\]=\mE\[\e^{-iz((1-\eta)\xi+L^{(\alpha)}_\eta) }\]
=\e^{-\eta |z|^{\alpha}}\mE\[\e^{-iz(1-\eta)\xi }\]
=\e^{-\eta |z|^{\alpha}}\widehat{\mu_\eta}(z-z\eta).
\end{align*}
By induction method, we have for any $m\in\mN$
\begin{align*}
\widehat{\mu_\eta}(z)=\widehat{\mu_\eta}\((1-\eta)^mz\)\prod_{k=0}^m\e^{-\eta(1-\eta)^{\alpha k}|z|^\alpha}
=\widehat{\mu_\eta}\((1-\eta)^mz\)\e^{-\eta|z|^\alpha \ff{1-(1-\eta)^{\alpha m}}{1-(1-\eta)^{\alpha}}}.
\end{align*}
By taking $m\to\infty$, as $\eta\in(0,1)$ and $\widehat{\mu_\eta}(0)=1$, we have
\begin{align}\label{ft2}
\widehat{\mu_\eta}(z)=\e^{- \eta|z|^{\alpha}/(1-(1-\eta)^\alpha)}
\Rightarrow \mu_\eta\sim (\tfrac{\eta}{1-(1-\eta)^{\alpha}})^{\frac1\alpha}L^{(\alpha)}_1.
\end{align}
The proof is complete.
\end{proof}

Based on Proposition \ref{sy}, we can show the following result.
\begin{theorem}\label{syy2}
Let $\alpha\in(1,2)$ and ${\bf d}(x,y)=|x-y|$. It holds that
\begin{align*}
\frac{\alpha-1}{\alpha}\int_{0}^{1}r^{\alpha}\e^{-\ff{r^\alpha}{\alpha}}\dif r\leq \varliminf_{\eta\to0}\frac{\cW_{\bf d}(\mu_\eta,\mu)}\eta\leq \varlimsup_{\eta\to0}\frac{\cW_{\bf d}(\mu_\eta,\mu)}\eta\leq \frac{\alpha-1}{2\alpha^{(1+\alpha)/\alpha}}\mE \big|L^{(\alpha)}_1\big|.
\end{align*}
\end{theorem}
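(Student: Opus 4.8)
The plan is to leverage Proposition~\ref{sy}, which exhibits both $\mu$ and $\mu_\eta$ as dilations of the single random variable $L:=L^{(\alpha)}_1$: setting $c_0:=\alpha^{-1/\alpha}$ and $c_\eta:=\big(\tfrac{\eta}{1-(1-\eta)^{\alpha}}\big)^{1/\alpha}$, we have $\mu=\Law(c_0 L)$ and $\mu_\eta=\Law(c_\eta L)$. Thus the whole problem reduces to understanding the scalar $c_\eta-c_0$ as $\eta\to0$. From $1-(1-\eta)^\alpha=\alpha\eta-\tfrac{\alpha(\alpha-1)}2\eta^2+O(\eta^3)$ one gets $c_\eta^\alpha=\tfrac1\alpha\big(1+\tfrac{\alpha-1}2\eta+O(\eta^2)\big)$, hence $c_\eta-c_0=c_0\tfrac{\alpha-1}{2\alpha}\eta+O(\eta^2)$; in particular $c_\eta>c_0$ for all small $\eta$ and $\eta^{-1}(c_\eta-c_0)\to\tfrac{(\alpha-1)c_0}{2\alpha}=\tfrac{\alpha-1}{2\alpha^{(1+\alpha)/\alpha}}$.

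For the upper bound I would use the synchronous coupling $(c_\eta L,c_0 L)\in\Pi(\mu_\eta,\mu)$, which gives
\[
\cW_{\bf d}(\mu_\eta,\mu)\le\mE\big|c_\eta L-c_0 L\big|=(c_\eta-c_0)\,\mE\big|L^{(\alpha)}_1\big|,
\]
and $\mE|L^{(\alpha)}_1|<\infty$ since $\alpha>1$. Dividing by $\eta$ and letting $\eta\to0$ with the expansion above yields $\varlimsup_{\eta\to0}\eta^{-1}\cW_{\bf d}(\mu_\eta,\mu)\le\tfrac{\alpha-1}{2\alpha^{(1+\alpha)/\alpha}}\mE|L^{(\alpha)}_1|$. (This coupling is in fact the monotone one, so equality even holds here, but only the inequality is needed.)

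For the lower bound I would feed the Kantorovich--Rubinstein duality \eqref{jsy5} the specific test function
\[
g(x):=2\int_0^1\big(1-\cos(zx)\big)\,\dif z,
\]
which lies in ${\rm Lip}({\bf d})$ because $g'(x)=2\int_0^1 z\sin(zx)\,\dif z$ satisfies $\|g'\|_\infty\le2\int_0^1 z\,\dif z=1$ (and $g$ is bounded, so $\mu(g),\mu_\eta(g)$ are finite). Since $\mu$ and $\mu_\eta$ are symmetric, $\int\cos(zx)\,\nu(\dif x)=\widehat\nu(z)$ for $\nu\in\{\mu,\mu_\eta\}$, so by Fubini and Proposition~\ref{sy},
\[
\mu_\eta(g)-\mu(g)=2\int_0^1\big(\widehat\mu(z)-\widehat{\mu_\eta}(z)\big)\dif z=2\int_0^1\big(\e^{-z^\alpha/\alpha}-\e^{-c_\eta^\alpha z^\alpha}\big)\dif z\ge0
\]
for small $\eta$ (because $c_\eta^\alpha>1/\alpha$). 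Using $c_\eta^\alpha-\tfrac1\alpha=\tfrac{\alpha-1}{2\alpha}\eta+O(\eta^2)$, the function $z\mapsto\eta^{-1}\big(\e^{-z^\alpha/\alpha}-\e^{-c_\eta^\alpha z^\alpha}\big)$ is nonnegative and tends pointwise on $(0,1]$ to $\tfrac{\alpha-1}{2\alpha}z^\alpha\e^{-z^\alpha/\alpha}$, so Fatou's lemma gives
\[
\varliminf_{\eta\to0}\frac{\cW_{\bf d}(\mu_\eta,\mu)}{\eta}\ge\varliminf_{\eta\to0}\frac{\mu_\eta(g)-\mu(g)}{\eta}\ge\frac{\alpha-1}{\alpha}\int_0^1 r^\alpha\e^{-r^\alpha/\alpha}\,\dif r.
\]
Combined with the trivial $\varliminf\le\varlimsup$, this is the claim.

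The argument is essentially bookkeeping on top of Proposition~\ref{sy}; the only points needing a little care are the second-order Taylor expansion of $c_\eta$ at $\eta=0$ and the verification that the chosen $g$ is $1$-Lipschitz, so the somewhat opaque constant $\tfrac{\alpha-1}{\alpha}\int_0^1 r^\alpha\e^{-r^\alpha/\alpha}\dif r$ is simply what this one admissible test function yields rather than a sharp value. I do not foresee a serious obstacle.
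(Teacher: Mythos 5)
Your proposal is correct and follows essentially the same route as the paper: the synchronous coupling of the two dilations of $L^{(\alpha)}_1$ from Proposition \ref{sy} together with the Taylor expansion of $\big(\tfrac{\eta}{1-(1-\eta)^{\alpha}}\big)^{1/\alpha}-\alpha^{-1/\alpha}$ for the upper bound, and Kantorovich--Rubinstein duality with a Fourier-type $1$-Lipschitz test function for the lower bound. Your $g(x)=2\int_0^1(1-\cos(zx))\,\dif z$ is just $2-\phi$ with the paper's $\phi(x)=\int_{-1}^1\cos(rx)\,\dif r$, and your Fatou argument (with the sign check $c_\eta^\alpha>1/\alpha$) plays the role of the paper's dominated convergence step, so the differences are purely cosmetic.
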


\begin{proof}
Let $\alpha\in(1,2)$. By the definition of $\cW_{\bf d}$ and  Proposition \ref{sy}, we have
\begin{align*}
\cW_{\bf d}(\mu_\eta,\mu)=\ \inf_{(X,Y)\in\Pi(\mu_\eta,\mu)}\mathbb{E}|X-Y|\leq   \left|\big(\tfrac{\eta}{1-(1-\eta)^{\alpha}}\big)^{\frac1\alpha}-\alpha^{-\ff1\alpha}\right|\mE \big|L^{(\alpha)}_1\big|.
\end{align*}
Elementary calculations gives that
$$
\lim_{\eta\to 0}\left|\big(\tfrac{\eta}{1-(1-\eta)^{\alpha}}\big)^{\frac1\alpha}-\alpha^{-\ff1\alpha}\right|/\eta=\tfrac{\alpha-1}{2\alpha^{(1+\alpha)/\alpha}}.
$$
Hence,
\begin{align*}
\varlimsup_{\eta\to0}\cW_{\bf d}(\mu_\eta,\mu)/\eta\leq \tfrac{\alpha-1}{2\alpha^{(1+\alpha)/\alpha}}\mE \big|L^{(\alpha)}_1\big|.
\end{align*}
On the other hand, we introduce the function
\begin{align}\label{B2}
\phi(x)=\int_{-1}^{1}\cos(rx)\dif r.
\end{align}
Clearly, $|\phi(x)-\phi(y)|\leq |x-y|$.
By \eqref{jsy5}, we have
\begin{align*}
\cW_{\bf d}(\mu_\eta,\mu)\geq |\mu_\eta(\phi)-\mu(\phi)|.
\end{align*}
By Fubini's theorem, we have
$$
\mu(\phi)=\int_{\mR}\int_{-1}^{1}\cos(rx)\dif r\mu(\dif x)
=\int_{\mR}\int_{-1}^{1}\e^{-irx}\dif r\mu(\dif x)
=\int_{-1}^{1}\hat{\mu}(r)\dif r\overset{\eqref{ft1}}
=\int_{-1}^{1}\e^{-\ff{|r|^\alpha}{\alpha}}\dif r
$$
and 
$$
\mu_\eta(\phi)
=\int_{\mR}\int_{-1}^{1}\e^{-irx}\dif r\mu_\eta(\dif x)
=\int_{-1}^{1}\widehat{\mu_\eta}(r)\dif r\overset{\eqref{ft2}}
=\int_{-1}^{1}\e^{- \eta|r|^{\alpha}/(1-(1-\eta)^\alpha)}\dif r.
$$
Noting that
\begin{align}\label{BB2}
\lim_{\eta\to0}\Big(\e^{- \eta|r|^{\alpha}/(1-(1-\eta)^\alpha)}-\e^{-\ff{|r|^\alpha}{\alpha}}\Big)\eta^{-1}
=-\ff{\alpha-1}{2\alpha}|r|^{\alpha}\e^{-\ff{|r|^\alpha}{\alpha}},
\end{align}
by the dominated convergence theorem, we get
$$
\varliminf_{\eta\to0}\frac{\cW_{\bf d}(\mu_\eta,\mu)}\eta\geq \lim_{\eta\to0}\frac{|\mu_\eta(\phi)-\mu(\phi)|}\eta=\int_{-1}^{1}\ff{\alpha-1}{2\alpha}|r|^{\alpha}\e^{-\ff{|r|^\alpha}{\alpha}}\dif r.
$$
The proof is complete.
\end{proof}

\end{appendix}

{\bf Funding}: The first author is supported by NNSFC grant of China (No. 12301176), the Natural Science Foundation of Jiangsu Province (No. BK20220867), and the China Postdoctoral
Science Foundation (No. 2023M741692).

The second author is supported by NNSFC grant of China (No. 12071499), the Science and Technology Development Fund of Macau S.A.R. FDCT 0074/2023/RIA2, and the University of Macau grant MYRG-GRG2023-00088-FST.

The third author is supported by NNSFC grant of China (No. 12401172), the China Postdoctoral Science Foundation (No. 2023M740261), and the Postdoctoral Fellowship Program of China Postdoctoral Science Foundation (Class C, No. GZC20242180).

The fourth author is supported by the National Key R\&D program of China (No. 2023YFA1010103) and NNSFC grant of China (No. 12131019)  and the DFG through the CRC 1283 ``Taming uncertainty and profiting from randomness and low regularity in analysis, stochastics and their applications''.

\end{document}